\begin{document}

\newtheorem{theorem}{Theorem}[section]
\newtheorem{lemma}[theorem]{Lemma}
\newtheorem{definition}[theorem]{Definition}
\newtheorem{proposition}[theorem]{Proposition}
\newtheorem{corollary}[theorem]{Corollary}
\newtheorem{conjecture}[theorem]{Conjecture}
\theoremstyle{remark}
\newtheorem{remark}[theorem]{Remark}
\newtheorem{example}[theorem]{Example}

\newcommand{\dum}{\partial}
\newcommand{\pairs}{\mathfrak{A}}
\newcommand{\soc}{\operatorname{soc}}
\newcommand{\socm}{\mathfrak{soc}}
\newcommand{\smlr}{\ll}
\newcommand{\MW}{\mathcal{MW}}
\newcommand{\MWn}{\dagger}
\newcommand{\nMW}{\circ}
\newcommand{\dom}{\prec}
\newcommand{\supp}{\operatorname{supp}}
\newcommand{\C}{\mathbb{C}}
\newcommand{\Seg}{\operatorname{Seg}}
\newcommand{\Tabs}{\mathcal{T}}
\newcommand{\sh}{\operatorname{sh}}
\newcommand{\GL}{\operatorname{GL}}
\newcommand{\m}{\mathfrak{m}}
\newcommand{\n}{\mathfrak{n}}
\newcommand{\la}{\mathfrak{l}}
\newcommand{\Vien}{\mathcal{K}}
\newcommand{\Vein}{\mathcal{K'}}
\newcommand{\depth}{\mathfrak{d}}
\newcommand{\Mult}{\mathfrak{M}}
\newcommand{\Z}{\mathbb{Z}}
\newcommand{\N}{\mathbb{N}}
\newcommand{\abs}[1]{\left|{#1}\right|}
\newcommand{\nextind}[1]{#1^+}
\newcommand{\prevind}[1]{#1^-}
\renewcommand{\subset}{\subseteq}
\renewcommand{\supset}{\supseteq}
\newcommand{\RSK}{\mathcal{RSK}}
\newcommand{\RSKn}{\operatorname{RSK}'}
\newcommand{\id}{\operatorname{Id}}
\newcommand{\Lads}{\mathcal{L}}
\newcommand{\Lad}{\operatorname{Lad}}
\newcommand{\lshft}[1]{\overset{\leftarrow}{#1}\vphantom{#1}}               
\newcommand{\spcl}{\flat}
\newcommand{\ldr}[1]{#1^{\natural}}
\newcommand{\ldrr}[1]{\la(#1)}
\newcommand{\drv}[1]{#1'}
\newcommand{\inv}[1]{#1^{\diamond}}
\newcommand{\std}{\Lambda}

\newcommand{\anti}{\searrow}
\newcommand{\bA}{\mathbb{A}}
\newcommand{\bP}{\mathbb{P}}
\newcommand{\bQ}{\mathbb{Q}}
\newcommand{\bY}{\mathbb{Y}}
\newcommand{\cH}{\mathcal{H}}
\newcommand{\cV}{\mathcal{V}}
\newcommand{\Col}{\mathrm{Col}}
\newcommand{\fm}{\mathfrak{m}}
\newcommand{\kh}{\hat{k}}
\newcommand{\Row}{\mathrm{Row}}
\newcommand{\shape}{\mathrm{shape}}
\newcommand{\Tab}{\mathrm{Tab}}
\newcommand{\ub}{u^\bullet}
\newcommand{\ubd}{u^{\bullet\dagger}}
\newcommand{\Uh}{\hat{U}}
\newcommand{\vb}{v^\bullet}
\newcommand{\word}{\mathrm{word}}
\newcommand{\wt}{\mathrm{wt}}
\newcommand{\invert}{\mathrm{inv}}
\newcommand{\flip}{\nearrow\!\!\!\!\!\!\swarrow}
\newcommand{\ITab}{\mathrm{ITab}}
\newcommand{\cRSK}{\mathrm{cRSK}}

\title[RSK for $GL_n$ over local fields]{Robinson-Schensted-Knuth correspondence in the representation theory
of the general linear group over a non-archimedean local field\\
(with an appendix by Mark Shimozono)}
\author{Maxim Gurevich}
\address{Department of Mathematics, Technion -- Israel Institute of Technology, Haifa, Israel.}
\email{maxg@technion.ac.il}
\author{Erez Lapid}
\address{Department of Mathematics, Weizmann Institute of Science, Rehovot, Israel}
\email{erez.m.lapid@gmail.com}
\date{\today}

\begin{abstract}
We construct new ``standard modules'' for the representations
of general linear groups over a local non-archimedean field.
The construction uses a modified Robinson-Schensted-Knuth correspondence
for Zelevinsky's multisegments.

Typically, the new class categorifies the basis of Doubilet, Rota, and Stein for matrix polynomial rings, indexed by bitableaux.
Hence, our main result provides a link between the dual canonical basis (coming from quantum groups) and the DRS basis.
\end{abstract}

\maketitle

\setcounter{tocdepth}{1}
\tableofcontents

\section{Introduction}
The Zelevinsky classification (\cite{MR584084}) is one of the cornerstones of the representation theory of reductive groups over a non-Archimedean local field $F$.
It classifies the equivalence classes of the irreducible (complex, smooth) representations of the general linear groups of all ranks over $F$
in terms of multisegments, which are essentially a combinatorial object.

The irreducible representations may all be obtained as socles of the so-called standard modules (that are also indexed by multisegments).
The standard modules are the parabolic induction of certain irreducible, essentially square-integrable representations,
and constitute a basis for the Grothendieck group.
The situation is analogous to that in category $\mathcal{O}$ where the Verma modules play the role of standard modules.
This is in fact not a coincidence. It is explained by the Arakawa--Suzuki functors \cite{MR1652134}
which provide a link between category $\mathcal{O}$ of type $A$ and representations of $\GL_n(F)$, $n\ge0$.
See \cites{MR2320806, MR3495794, MR3866895} for more details.

In this paper we present a new class of \textit{RSK-standard} modules, that are parabolically induced from ladder representations.
Its construction relies on an application of the well-known Robinson--Schensted--Knuth (RSK) correspondence.
The new class is again in bijection with irreducible representations, in such a way that each irreducible representation is realized as a subrepresentation
(and conjecturally, the socle) of the corresponding RSK-standard module.

Ladder representations are a class of irreducible representations with particularly nice properties \cites{MR3163355, MR2996769, MR3495794, 10.1093/imrn/rnz006}.
In particular, parabolic induction from two ladder representations is well understood.

Let us describe the new construction in more detail. Roughly speaking, a multisegment $\m$ is a collection of pairs of integers $[a_i,b_i]$, $i=1,\dots,n$, with $a_i\le b_i$.
Denote by $Z(\m)$ the irreducible representation of $GL_N(F)$ corresponding to $\m$, as defined by Zelevinsky.

Now, the RSK correspondence attaches to $\m$ a pair of semistandard Young tableaux of the same shape
of total size $n$.
For our purposes it will be more convenient to use a modified version of RSK, $\m\mapsto(P_{\m},Q_{\m})$
where $P_{\m}$ and $Q_{\m}$ are \emph{inverted} Young tableaux (of the same shape, of total size $n$).
By an inverted Young tableau we mean that the entries in each row are strictly decreasing and
the entries in each column are weakly decreasing,
unlike the usual convention 
in which the entries in each row are weakly increasing and the entries in each column are strictly increasing.
As in the classical case, $P_{\m}$ is filled by the $a_i$'s and $Q_{\m}$ by the $b_i$'s.

Suppose that the pair $(P_{\m}, Q_{\m})$ is given by
\ytableausetup{mathmode,boxsize=2em}
\[
P_{\m}=\begin{ytableau}
c_{1,1} & c_{1,2} & \dots & c_{1,\lambda_2}
& \dots & c_{1,\lambda_1} \\
c_{2,1}    & c_{2,2} & \dots
& c_{2,\lambda_2} \\
\vdots & \vdots
& \vdots \\
c_{k,1} & \dots & c_{k,\lambda_k}
\end{ytableau}
\ \ \
Q_{\m}=\begin{ytableau}
d_{1,1} & d_{1,2} & \dots & d_{1,\lambda_2}
& \dots & d_{1,\lambda_1} \\
d_{2,1}    & d_{2,2} & \dots
& d_{2,\lambda_2} \\
\vdots & \vdots
& \vdots \\
d_{k,1} & \dots & d_{k,\lambda_k}
\end{ytableau}
\]
To each row of the resulting shape we attach a ladder representation by setting
\[
\pi_i = Z( [c_{i,1},d_{i,1}] +\ldots + [c_{i,\lambda_i},d_{i,\lambda_i}] ),\quad i=1,\ldots,k\;.
\]
In other words, $\pi_i$ is the irreducible representation whose defining multisegment is encoded in the pair of $i$-th rows of the tableaux $(P_{\m}, Q_{\m})$.

The RSK-standard module attached to $\m$ is now defined as
\[
\std(\m) = \pi_k\times\cdots \times \pi_1\;,
\]
where the (Bernstein--Zelevinsky) product denotes (normalized) parabolic induction.

An intriguing angle on the new class of modules comes from combinatorial invariant theory.
Let $\mathcal{R}$ be the direct sum of Grothendieck groups of smooth finite-length representations, of all $GL_n(F),\,n\geq0$.
The Bernstein--Zelevinsky product makes $\mathcal{R}$ into a commutative ring and Zelevinsky showed that $\mathcal{R}$ is the polynomial ring over $\Z$ freely generated by the classes
of irreducible, essentially square-integrable representations.
In other words, the standard modules form the monomial basis for $\mathcal{R}$ (as a $\Z$-module).
On the other hand, in the context of invariant theory, Doubilet, Rota, and Stein constructed a basis for polynomial matrix rings,
parameterized by pairs of tableaux, consisting of products of minors \cites{MR0498650} (see also \cites{MR0485944, 1605.06696}.
As will be explained in \S\ref{sec: rota}, the RSK-standard modules \textit{categorify} the DRS basis for matrix polynomial rings.
This phenomenon follows from the paramount determinantal property \cite{MR3163355} of ladder representations.
The construction of $\std(\m)$ as a product of ladders parallels the recipe of DRS for obtaining all basis elements as products of minors.

Our main result is the following.
\begin{theorem}\label{thm: intro}
For every multisegment $\m$, $Z(\m)$ occurs as a subrepresentation of $\std(\m)$. More precisely,
\[
Z(\m) \cong \soc(\soc(\dots\soc(\soc(\pi_k\times\pi_{k-1})\times\pi_{k-2})\times \dots\, \times\pi_2)\times \pi_1)\;.
\]
\end{theorem}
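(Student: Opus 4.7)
I would argue by induction on the number of rows $k$ of the common shape of $P_\m$ and $Q_\m$. The base case $k=1$ is tautological: $\m$ is then itself a ladder, so $\std(\m) = \pi_1 = Z(\m)$ is irreducible and equals its own socle.

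For the inductive step, let $\m'$ denote the multisegment whose pair of inverted tableaux under the modified RSK is obtained from $(P_\m, Q_\m)$ by deletion of the top row. By construction $\std(\m) = \std(\m')\times\pi_1$, and the inductive hypothesis identifies
\[
\soc(\cdots\soc(\pi_k\times\pi_{k-1})\times\cdots\times\pi_2) \cong Z(\m')
\]
as a subrepresentation of $\pi_k\times\cdots\times\pi_2$. Because parabolic induction is exact, this yields an embedding $Z(\m')\times\pi_1 \hookrightarrow \std(\m)$, and the statement of the theorem becomes equivalent to the assertion that $\soc(Z(\m')\times\pi_1) \cong Z(\m)$.

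To verify this I would proceed in two steps. First, produce an explicit embedding $Z(\m) \hookrightarrow Z(\m')\times\pi_1$. The key combinatorial input is a Greene-type characterization of the first row of the modified RSK output: it should describe $\pi_1$ as a canonical ``maximal ladder'' that can be peeled off from $\m$, with $\m'$ the corresponding residual multisegment. Combined with the determinantal property of ladder representations recalled in the introduction and a Jacquet-module/Frobenius reciprocity computation, this should supply the desired embedding. The second step, which I expect to be the principal obstacle, is to show that $\soc(Z(\m')\times\pi_1)$ is irreducible, so that the submodule constructed in the first step exhausts it. This is a socle-irreducibility assertion for a parabolic induction of a general irreducible by a ladder; I would attack it in the spirit of Lapid and Minguez's analysis of socles of such inductions, exploiting the rigid combinatorial position of $\pi_1$ relative to $\m'$ forced by RSK to rule out any competing irreducible submodule of $Z(\m')\times\pi_1$.
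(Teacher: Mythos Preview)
Your outer induction on $k$ correctly reduces the theorem to the single assertion $\soc(Z(\m')\times\pi_1)\cong Z(\m)$, where $\pi_1=Z(\ldrr{\m})$ and $\m'=\drv\m$ is the derived multisegment. But you have inverted the difficulty of your two steps. Step (2) is not an obstacle at all: it is exactly the result of Lapid--M\'inguez quoted in \S\ref{sec: rep} (for any irreducible $\tau$ and any ladder $\la$, the socle of $\tau\times Z(\la)$ is irreducible and occurs with multiplicity one). All the work lies in step (1), identifying \emph{which} irreducible the socle is. Your plan for that --- Greene's characterization of the top row, the determinantal identity for ladders, and a Frobenius-reciprocity computation --- does not amount to an argument. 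The determinantal formula lives in the Grothendieck group and says nothing about embeddings; to invoke Frobenius reciprocity you would need to locate a specific quotient of a Jacquet module of $Z(\m)$ isomorphic to $Z(\m')\otimes Z(\ldrr{\m})$, and Jacquet modules of general $Z(\m)$ are not understood to that precision.

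The paper proceeds quite differently. Lapid--M\'inguez supply not only socle irreducibility but a recursive \emph{algorithm} (Proposition~\ref{prop: lm}) that computes the multisegment $\socm(\n,\la)$ with $\soc(Z(\n)\times Z(\la))=Z(\socm(\n,\la))$; this recursion is driven by one step of the M\oe glin--Waldspurger procedure applied to $\n$. The core of the proof is then purely combinatorial: Corollary~\ref{cor: main} shows that, under the hypothesis $\min\m<\min\ldrr{\m}$, one Knuth step of the modified RSK commutes with one M\oe glin--Waldspurger step, i.e.\ $\ldrr{\m^{\MWn}}=\ldrr{\m}$, $\Delta^{\nMW}(\drv\m)=\Delta^{\nMW}(\m)$, and $(\m^{\MWn})'=(\drv\m)^{\MWn}$. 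This (together with Lemma~\ref{lem: sc} for the complementary case) lets an induction on $\lvert\m\rvert$ verify directly that $\socm(\drv\m,\ldrr{\m})=\m$. No embedding is ever built by hand; the entire argument is a match-up of two independently defined recursions.
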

Here, $\soc(\tau)$ stands for the socle (i.e., the maximal semisimple subrepresentation) of a representation $\tau$.

We expect (Conjecture \ref{conj: main}) that as in Zelevinsky's case, $\soc(\std(\m))$ is itself irreducible (hence isomorphic to $Z(\m)$)
and occurs with multiplicity one in the Jordan--H\"older sequence of $\std(\m)$.

Note that the parameter $k= k(\m)$, i.e. the number of rows in the tableaux $P_{\m},Q_{\m}$, is the \textit{width} of the multisegment, as defined and studied in \cite{10.1093/imrn/rnz006}.
In particular, it was shown in [ibid.] that $k$ is the minimal number of ladder representations whose product contains $Z(\m)$ as a subquotient.
In that respect, the RSK-standard modules possess a minimality property.

The case of Theorem \ref{thm: intro} with $k(\m)=2$ and certain regularity conditions was previously shown in
\cite{1711.01721} using quantum shuffle methods and equivalences to module categories of quiver Hecke algebras.
In fact, the papers \cites{10.1093/imrn/rnz006, 1711.01721} were the point of departure of the present work.

Coming back to the categorification context, irreducible representations are known to give the (dual) \textit{canonical basis}
(\`{a} la Lusztig or Kashiwara) of $\mathcal{R}$ (see \cite{MR1985725}, for example).
Thus, Theorem \ref{thm: intro} gives a certain link between the DRS basis and the canonical basis.
Similar links were explored in \cite{MR1392496} by means of quantum matrix rings.
Our results and expectations open a new perspective on those links in a categorial setting, i.e. it is the submodule structure which is used to characterize relations between those two bases.

A natural question which arises is what can be said about the other irreducible constituents of $\std(\m)$.
Based on empirical evidence, we conjecture that for any irreducible subquotient $Z(\n)$ of $\std(\m)$ other than $Z(\m)$,
the RSK-data $(P_{\n},Q_{\n})$ is strictly smaller than $(P_{\m},Q_{\m})$ with respect to the product order of the domination order of inverted Young tableaux (see \S\ref{sect: tri}).
Once again, this would be analogous to the situation for Zelevinsky classification, where the pertinent partial order on multisegments (as originally defined in
\cite{MR584084}) is closely related to the Bruhat order on the symmetric group.

In particular, this would give a positive answer to the question which still remains open, of whether the classes of RSK-standard modules indeed form a $\Z$-basis for
(an appropriate subgroup of) $\mathcal{R}$. Certain related questions can be answered (Proposition \ref{prop: basis-rota}) using the basis theorem of \cite{MR0485944}.

Recall that the classical RSK correspondence admits several (equivalent) implementations.
For our purposes, it is best to use Knuth's algorithm \cite{MR0272654}*{\S4} which constructs the Young tableaux row by row,
rather than the earlier (and perhaps more commonly used) Robinson-Schensted insertion/bumping algorithm which fills them box by box.
(A pictorial approach to Knuth's algorithm was given by Viennot \cite{MR0470059}.)

A key part of the proof of Theorem \ref{thm: intro} is an intriguing relation between the Knuth algorithm and the
description of the socles of certain induced representations due to the second-named author and M\'inguez \cite{MR3573961}.
In turn, this description is closely related to the  M\oe glin--Waldspurger algorithm
for the Zelevinsky involution \cite{MR863522}*{\S II.2}.
In fact, the main new combinatorial input (Corollary \ref{cor: main}), which is interesting in its own right,
is that roughly speaking, under certain conditions the Knuth algorithm commutes with the first step of the M\oe glin--Waldspurger algorithm.

In an appendix by Mark Shimozono, further illuminating relations between those two combinatorial algorithms are presented.
The slightly different point of view of the appendix suggests that some of our questions may be resolved in future work through the combinatorial theory of tableaux and crystals.

The connection between RSK and the M\oe glin--Waldspurger algorithm is further elucidated by considering \textit{enhanced multisegments}, that is, ``multisegments with dummies''.
More precisely, starting with a (genuine) multisegment $\m=\Delta_1+\dots+\Delta_k$ we add empty (dummy) segments of the form $[a,a-1]$, $a\in\Z$.
This will not affect $Z(\m)$ but will change the output of RSK. In Theorem \ref{thm: dummy}, to any irreducible representation $\pi=Z(\m)$ we
concoct a whole slew of ``standard modules'' admitting $\pi$ as a subrepresentation (and conjecturally, as the socle)
by adding dummy segments to $\m$ at will.

Let $n_a$, $a\in\Z$ be the number of dummy segments of the form $[a,a-1]$.
There are two extreme cases, as stated in Proposition \ref{prop: std}. If $n_a\ll n_{a+1}$ for all $a$ in the support of $\m$ then the standard module
will be the one defined by Zelevinsky (taking the product of $Z(\Delta_i)$ in a suitable order).
On the other hand if $n_a\gg n_{a+1}$ for all $a$ in the support of $\m$, then we will get the standard module in the sense of Langlands.
In other words, the output of RSK essentially coincides (up to dummy segments) with the M\oe glin--Waldspurger algorithm.
Thus, we get a family of standard modules interpolating between the standard modules of Zelevinsky and Langlands.

Both the Zelevinsky classification and the RSK correspondence admit geometric interpretations
(starting with \cites{MR617466, MR783619} for the former, \cites{MR672610, MR929778, MR2979579} for the latter).
However, we are not aware of a geometric interpretation of the abovementioned partial order defined through RSK.
It would be interesting to find a geometric interpretation of the results and conjectures presented here.

Finally, it would be interesting to know whether other type $A$ representation categories, such as those of quiver Hecke algebras (through the Brundan-Kleshchev isomorphisms \cite{MR2551762}),
quantum affine algebras (through quantum affine Schur-Weyl duality \cite{MR1405590}) or even the classical category $\mathcal{O}$
(through the Arakawa-Suzuki functors \cites{MR1652134, MR2320806}), can be used to reinterpret the results of this paper.

Part of this work was carried out while the authors participated in the month-long activity
``On the Langlands Program: Endoscopy and Beyond'' at
the Institute for Mathematical Sciences of the National University of Singapore,
Dec.~2018 -- Jan.~2019. The first-named author and Mark Shimozono took part in the thematic trimester program on representation theory in the Institut Henri Poincar\'{e}, Paris, Jan.~2020,
where the fruitful conditions of the venue lead to the addition of Appendix A. We would like to thank the IMS, the IHP and the organizers of the two programs
for their warm hospitality.

\section{Operations on multisegments}
\subsection{Multisegments}
A \emph{segment} $\Delta$ (of length $b-a+1$) is a subset of $\Z$ of the form
\[
[a,b]=\{n\in\Z:a\le n\le b\}
\]
for some integers $a\le b$.
We will write $b(\Delta)=a$ and $e(\Delta)=b$.
We also write $\lshft\Delta=[a-1,b-1]$ 
and ${}^-\Delta=[a+1,b]$. Note that if $a=b$, then ${}^-\Delta$ is the empty set.

We denote by $\Seg$ the set of all segments.

Given $\Delta_1$, $\Delta_2\in\Seg$, we write $\Delta_1\smlr\Delta_2$ if $b(\Delta_1)<b(\Delta_2)$
and $e(\Delta_1)<e(\Delta_2)$. This is a strict partial order on $\Seg$.

A \emph{multisegment} is a multiset of segments, i.e., a formal finite sum
$\m=\sum_{i\in I}\Delta_i$ where $\{\Delta_i\in \Seg\}_{i\in I}$ are segments.

If $\m\ne0$, we will write $\min\m=\min_{i\in I}b(\Delta_i)$.

The size $|\m|$ of a multisegment $\m$ is by definition, the sum $\sum_{i\in I} |\Delta_i|$ of the cardinalities of its segments.

For any set $X$, we denote by $\N(X)$ the ordered commutative monoid of finite multisets on $X$,
that is, finitely supported functions from $X$ to $\N=\Z_{\ge0}$. We view elements of $\N(X)$ as formal finite (possibly empty) sums
of elements of $X$.

From this point of view, we denote by $\Mult:= \N(\Seg)$ the ordered monoid of multisegments. If $\m'\le\m$ we say that $\m'$ is a sub-multisegment of $\m$.

A \emph{ladder} is a nonzero multisegment of the form $\sum_{i=1}^k\Delta_i$, for segments $\Delta_i$,
where $\Delta_{i+1}\smlr\Delta_i$ for $i=1,\dots,k-1$.

We will write $\Lad\subset \Mult$ for the collection of ladders.

\subsection{M\oe glin--Waldspurger involution} \label{sec: MW}
Let us recall the M\oe glin--Waldspurger algorithm from \cite{MR863522}*{\S II.2}.
More precisely, we will adopt a mirror variation on [loc. cit.], in which minimal segments are taken in place of maximal ones.\footnote{This is justified by the fact that the M\oe glin--Waldspurger algorithm is compatible with contragredient.}

For any multisegment $0\neq \m = \sum_{i\in I} \Delta_i \in \Mult$ we define a segment $\Delta^{\nMW}(\m)$
and a multisegment $\m^{\MWn}$ as follows.

Let $i_1\in I$ be such that $b(\Delta_{i_1})=\min\m$ and $e(\Delta_{i_1})$ is minimal.
If there is no $i\in I$ such that $\Delta_{i_1}\smlr\Delta_i$ and $b(\Delta_i)=b(\Delta_{i_1})+1$,
then we set $\Delta^{\nMW}(\m):=[\min\m,\min\m]\in \Seg$.
Otherwise, let $i_2\in I$ be such that $\Delta_{i_1}\smlr\Delta_{i_2}$, $b(\Delta_{i_2})=b(\Delta_{i_1})+1$ and $e(\Delta_{i_2})$ is minimal with respect to these properties.
Continuing this way, we define an integer $k>0$ and indices $i_1,\dots,i_k\in I$, such that
\begin{enumerate}
\item For all $j<k$, $\Delta_{i_j}\smlr\Delta_{i_{j+1}}$,
$b(\Delta_{i_{j+1}})=b(\Delta_{i_j})+1$, and $e(\Delta_{i_{j+1}})$ is minimal with respect to these properties.
\item There does not exist $i\in I$ such that $\Delta_{i_k}\smlr\Delta_i$ and $b(\Delta_i)=b(\Delta_{i_k})+1$.
\end{enumerate}
We set $\Delta^{\nMW}(\m):=[b(\Delta_{i_1}),b(\Delta_{i_k})]=[\min\m,\min\m+k-1]\in \Seg$.

We call $I^*=\{i_1,\dots,i_k\}\subset I$ a set of \emph{leading indices} of $\m$. \label{sec: leading}
It is not unique, but the set $\{\Delta_{i_1},\dots,\Delta_{i_k}\}$ depends only on $\m$.
We also write $i^*_{\min}=i_1$, $i^*_{\max}=i_k$ and denote by
$i\mapsto\nextind{i}:I^*\setminus\{i^*_{\max}\}\rightarrow I^*\setminus\{i^*_{\min}\}$
the bijection $i_j\mapsto i_{j+1}$. The inverse will be denoted by $i\mapsto\prevind{i}$.

We set $\m^{\MWn}:=\sum_{i\in I}\Delta_i^*\in \Mult$ (discarding the summands that are empty sets), where
\begin{equation} \label{def: *}
\Delta_i^*=\begin{cases}{}^-\Delta_i&{i\in I^*,}\\\Delta_i&\text{otherwise.}\end{cases}
\end{equation}

We denote the resulting map $\m\mapsto(\m^{\MWn},\Delta^{\nMW}(\m))$ by
\[
\MW : \Mult\setminus\{0\} \to \Mult \times \Seg\;.
\]
We say that $\m$ is \textit{non-degenerate} if $\Delta_i^*\ne\emptyset$, for all $i\in I$.
Equivalently, $\Delta_i\ne [\min\m,\min\m]$ for all $i\in I$. \label{sec: ndgnrt}

Applying the map $\MW$ repeatedly we obtain the M\oe glin--Waldspurger involution\footnote{
The fact that $\m\mapsto\m^\#$ is an involution is not obvious from the definition.} $\m\mapsto\m^\#$ on $\Mult$
which is the combinatorial counterpart of the Zelevinsky involution \cite{MR584084}*{\S9}.
More precisely, $\m^\#$ is defined recursively by
\[
0^\#=0,\ \ \m^\#=\Delta^{\nMW}(\m)+(\m^{\MWn})^\#.
\]
In particular, the map $\MW$ is injective.

The subset $\Lad$ is preserved under $\m\mapsto\m^\#$.

\subsection{Modified RSK correspondence for multisegments} \label{sec: Vien}


Let $n\ge0$ be an integer and $\lambda=(\lambda_1,\dots,\lambda_k)$ a partition of $n$,
i.e. $\lambda_1\ge\dots\geq \lambda_k>0$ are integers and $\lambda_1+\dots+\lambda_k=n$.
The partition $\lambda$ gives rise to a Young diagram of size $n$.
The conjugate partition $\lambda^t:=\mu=(\mu_1,\dots,\mu_l)$ is given by $l=\lambda_1$, $\mu_j=\max\{i:\lambda_i\ge j\}$.

A semistandard Young tableau of shape $\lambda$ is a filling of a Young diagram of shape $\lambda$
by integers, such that the entries along each row are weakly increasing and the entries down each column are strictly increasing.

The classical Robinson--Schensted--Knuth (RSK) correspondence is a bijection between
$\N(\Z\times\Z)$ (i.e., multisets of pairs of integers) and
pairs of semistandard Young tableaux of the same shape.
We refer to \cite{MR1464693}*{\S4} or \cite{MR1676282}*{\S7.11-13} for standard references on RSK.

We will consider a slight modification of RSK where semistandard Young tableaux are replaced
by \textit{inverted Young tableaux}.
By definition, an inverted Young tableau of shape $\lambda$ is a filling of
the Young diagram of $\lambda$ by integers, i.e.
a double sequence $z_{i,j}\in\Z$, $i=1,\dots,k$, $j=1,\dots,\lambda_i$,
that satisfies
\[
z_{i,1}>\dots> z_{i,\lambda_i},\quad\forall i=1,\dots,k\;,
\]
\[
z_{1,j}\ge\dots\ge z_{\mu_j,j},\quad \forall j=1,\dots,l\;.
\]
We denote by $\mathcal{T}$ the set of pairs of inverted Young tableaux $(P,Q)$ of the same shape.
(See \cite{MR2979579} for a similar nonstandard convention.
The appendix of \cite{MR1464693} also discusses closely related variants of RSK.)

Thus, the modified RSK correspondence is a bijection
\[
\RSKn: \N(\Z\times\Z) \longrightarrow \mathcal{T}.
\]
It can be defined using a modification of the Schensted insertion/bumping algorithm where we replace
strict inequalities by weak inequalities in the opposite direction and vice versa.
It is advantageous, however, to use a modification of the Knuth algorithm which we will recall below.
We remark that if $\RSKn( \sum_{i\in I} (a_i,b_i)) = (P,Q)$, the $a_i$'s and the $b_i$'s comprise
the entries of the tableaux $P$ and $Q$, respectively.

We may identify $\Seg$ as a subset of $\Z\times \Z$ by $\Delta \mapsto (b(\Delta),e(\Delta))$.
Hence, we may identify $\Mult$ with a subset of $\N(\Z\times\Z)$.
Thus, for any multisegment $\m\in \Mult$ we may consider the pair of inverted Young tableaux
\[
(P_{\m},Q_{\m}): = \RSKn(\m)\in \mathcal{T}.
\]
In what follows we will only consider the restriction of $\RSKn$ to $\Mult$.

\subsubsection{Ladders and tableaux}\label{sect: lad}

First, we would like to be able to describe certain elements of $\mathcal{T}$ in terms of ladders.

Let $\la_1=\sum_{i=1}^k\Delta_i$ and $\la_2=\sum_{i=1}^{k'}\Delta'_i$ be two ladders with
$\Delta_{i+1}\smlr\Delta_i$, $i=1,\dots,k-1$ and $\Delta'_{i+1}\smlr\Delta'_i$, $i=1,\dots,k'-1$.

We say that $\la_2$ is \emph{dominant} with respect to $\la_1$, if $k'\ge k$ and $\lshft\Delta_i\smlr\Delta'_i$ for all
$i=1,\dots,k$.

We say that the pair $(\la_2,\la_1)$ is \emph{permissible} if $\la_2$ is dominant with respect to $\la_1$ and
for all $i=1,\dots,k$ and $j=1,\dots,k'$
such that $\lshft\Delta_i\smlr\Delta'_j$ and either $j=k'$ or $\lshft\Delta_i\not\smlr\Delta'_{j+1}$
(and in particular, $j\ge i$), we have $e(\Delta_r)\ge b(\Delta'_{j-i+r})$ for all $r=1,\dots,i$.

Let us write $\Lads'$ for the collection of tuples $(\la_1,\dots,\la_m)\in \Lad^m$, for some $m$, such that $\la_i$ is dominant with respect to $\la_{i+1}$, for all $1\leq i<m$.

We may think of $\Lads'$ as a subset of $\mathcal{T}$ as follows.
Given $\underline{\la} = (\la_1,\dots,\la_m)\in \Lads'$
where $\la_i = \sum_{j=1}^{n_i} \Delta^i_j$ with $\Delta^i_{j+1} \smlr \Delta^i_j$ we construct
\[
(P(\underline{\la}), Q(\underline{\la}))\in \mathcal{T}
\]
by letting the $(i,j)$-th entry in $P(\underline{\la})$ (resp. $Q(\underline{\la})$)
be $b(\Delta^i_j)$ (resp. $e(\Delta^i_j)$).
The map $\underline{\la}\mapsto (P(\underline{\la}), Q(\underline{\la}))$ is an injection of
$\Lads'$ into $\mathcal{T}$. Its image is the set of pairs $(P,Q)$ of tableaux of the same shape such that
$P_{i,j}\leq Q_{i,j}$ for all entries of the tableaux.

Finally, we denote by $\Lads\subset \Lads'$ the subset consisting of tuples $(\la_1,\dots,\la_m)\in \Lads'$ such that $(\la_i,\la_j)$
is permissible for all $1\le i<j\le m$.

\subsubsection{The Knuth implementation}\label{subsection: implement}
Let us fix a multisegment $0\neq \m=\sum_{i\in I}\Delta_i\in \Mult$.
We will explicate $(P_{\m},Q_{\m})$ introducing some terminology for multisegments.

We define the \emph{depth function}
$\depth=\depth_{\m}:I\rightarrow\Z_{\ge0}$ by
\begin{align*}
\depth(i)=\max\{j:\exists i_0=i,i_1,\dots,i_j\in I\text{ such that }&\Delta_{i_{r}}\smlr\Delta_{i_{r+1}}
,r=0,\dots,j-1\}\;.
\end{align*}
We let $d=d(\m)=\max_{i\in I}\depth(i)$ be the depth of $\m$.

It is clear that if $\depth(i)=\depth(j)$, then
either $\Delta_i=\Delta_j$ or $\Delta_i\supsetneq\Delta_j$ or $\Delta_i\subsetneq\Delta_j$.

\begin{lemma} \ \label{lem: ib}
\begin{enumerate}
\item \label{itm: ib-first}
For any $i\in I$ and $0\le k<\depth(i)$ there exists $i'\in I$ such that $\Delta_i\smlr\Delta_{i'}$ and $\depth(i')=k$.
\item \label{itm: ib-second}
Let $i_1,i_2\in I$ be such that $\depth(i_1)=\depth(i_2)$ and $\Delta_{i_2}\subset\Delta_{i_1}$.
Then for any $i\in I$ such that 
$\Delta_{i_2}\subset\Delta_i\subset\Delta_{i_1}$
and $\depth(i)>\depth(i_1)$, there exists $j\in I$ such that $\Delta_{i_2}\subset\Delta_j\subset\Delta_{i_1}$,
$\depth(j)=\depth(i_1)$ and $\Delta_i\smlr\Delta_j$.
\end{enumerate}
\end{lemma}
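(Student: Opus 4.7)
The plan is to handle the two items in sequence, with item \ref{itm: ib-second} reduced to item \ref{itm: ib-first} combined with the comparability observation stated immediately before the lemma (namely, that equal depth forces two segments to be equal or strictly comparable under inclusion).

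For item \ref{itm: ib-first}, I would start by fixing a maximal chain $i=j_0,j_1,\dots,j_d\in I$ realizing the depth $d=\depth(i)$, so that $\Delta_{j_r}\smlr\Delta_{j_{r+1}}$ for each $0\le r<d$. The key claim is that $\depth(j_r)=d-r$. The bound $\depth(j_r)\ge d-r$ is witnessed directly by the tail $j_r,j_{r+1},\dots,j_d$ of the chain. Conversely, if $\depth(j_r)>d-r$, then prepending $j_0,\dots,j_r$ to an optimal chain starting at $j_r$ would yield a chain from $i$ of length exceeding $d$, contradicting $\depth(i)=d$. For a given $0\le k<d$, setting $i':=j_r$ with $r:=d-k\ge 1$ then gives $\depth(i')=k$, and $\Delta_i\smlr\Delta_{i'}$ follows by transitivity of $\smlr$.

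For item \ref{itm: ib-second}, apply item \ref{itm: ib-first} to $i$: since $\depth(i)>\depth(i_1)=\depth(i_2)=:\delta$, we obtain $j\in I$ with $\Delta_i\smlr\Delta_j$ and $\depth(j)=\delta$. By the comparability observation, $\Delta_j$ must be equal to, or strictly comparable (by inclusion) with, each of $\Delta_{i_1}$ and $\Delta_{i_2}$. The conclusion $\Delta_{i_2}\subset\Delta_j\subset\Delta_{i_1}$ then follows provided I rule out two bad configurations. First, $\Delta_j\supsetneq\Delta_{i_1}$ would force $b(\Delta_j)\le b(\Delta_{i_1})\le b(\Delta_i)<b(\Delta_j)$, using $\Delta_i\subset\Delta_{i_1}$ and $\Delta_i\smlr\Delta_j$. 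Second, and dually, $\Delta_j\subsetneq\Delta_{i_2}$ would force $e(\Delta_j)\le e(\Delta_{i_2})\le e(\Delta_i)<e(\Delta_j)$ using $\Delta_{i_2}\subset\Delta_i$. Both are contradictions, and every remaining case is compatible with the required containment.

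The main obstacle I anticipate is less technical than conceptual: recognizing that item \ref{itm: ib-second} is a corollary of item \ref{itm: ib-first} together with the comparability observation, rather than requiring a direct combinatorial construction. Once that reduction is seen, the case analysis is entirely driven by the strict inequalities in the definition of $\smlr$ and is routine.
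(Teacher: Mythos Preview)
Your argument is correct. For item~\ref{itm: ib-first} you spell out what the paper simply calls ``clear,'' and your reasoning is fine (modulo the harmless phrasing ``prepending $j_0,\dots,j_r$'' where you mean to concatenate the initial segment $j_0,\dots,j_{r-1}$ with the optimal chain from $j_r$).

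For item~\ref{itm: ib-second} you take a genuinely different route from the paper. The paper argues by induction on $\depth(i)-\depth(i_1)$: it finds $i'$ with $\Delta_i\smlr\Delta_{i'}$ and $\depth(i')=\depth(i)-1$, shows directly (without invoking the comparability observation) that $\Delta_{i_2}\subset\Delta_{i'}\subset\Delta_{i_1}$ by ruling out $\Delta_{i_1}\smlr\Delta_{i'}$ and $\Delta_{i_2}\smlr\Delta_{i'}$ via depth inequalities, and then recurses. You instead jump straight to depth $\depth(i_1)$ using item~\ref{itm: ib-first}, and then lean on the equal-depth comparability observation to force the containment. Your approach is more direct and avoids the induction entirely; the paper's approach is self-contained in that it does not appeal to the comparability remark. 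Both are short and elementary, but yours makes the logical dependence on item~\ref{itm: ib-first} and on the preceding observation more transparent.
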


\begin{proof}
The first part is clear.
To prove the second part we argue by induction on $\depth(i)-\depth(i_1)$.
Let $i'$ be such that $\Delta_i\smlr\Delta_{i'}$ and $\depth(i')=\depth(i)-1$.
Then, $b(\Delta_{i_1})\le b(\Delta_i)<b(\Delta_{i'})$.
However, we cannot have $\Delta_{i_1}\smlr\Delta_{i'}$ since otherwise $\depth(i_1)\ge\depth(i')+1=\depth(i)$.
Therefore $\Delta_{i'}\subset\Delta_{i_1}$.
Similarly, we cannot have $\Delta_{i_2}\smlr\Delta_{i'}$ and therefore $\Delta_{i'}\supset\Delta_{i_2}$.
If $\depth(i')=\depth(i_1)$ we are done. Otherwise, we apply the induction hypothesis to $i'$.
\end{proof}

For any $k=0,\dots,d$, choose an \emph{admissible enumeration} $\{i^k_1,\dots,i^k_l\}$ of $\depth^{-1}(k)$, namely such that
$\Delta_{i^k_1}\supset\dots\supset\Delta_{i^k_l}$, and set $j_k:=i^k_l$.

We will say that $i^k_r$ is a \emph{distinguished index} (with respect to the enumeration) if either $r=l$ or $\Delta_{i^k_{r+1}}\ne\Delta_{i^k_r}$.
\label{sec: distin}

Let $\sigma$ be the permutation of the index set $I$, whose cycle decomposition is given by $\{(i^k_1,\dots,i^k_l)\}_{k=0,\dots,d}$.

Define
\begin{subequations} \label{def: Vien}
\begin{gather}
\Delta_i'=[b(\Delta_i),e(\Delta_{\sigma(i)})]\in \Seg,\ \ i\in I,\\
\ldr{I}=\{j_0,\dots,j_d\},\ I'=I\setminus\ldr{I}\\
\ldrr{\m}=\sum_{i\in\ldr{I}}\Delta'_i,\ \drv\m=\sum_{i\in I'}\Delta'_i\in \Mult\;.
\end{gather}
\end{subequations}
For reference, we will write $i^\vee = \sigma(i)$, for all $i\in I$.

Note that for any $i\in\ldr{I}$ we have
\begin{equation}\label{eq: max}
b(\Delta'_i)=\max_{j\in I:\depth(j)=\depth(i)}b(\Delta_j),\quad e(\Delta'_i)=\max_{j\in I:\depth(j)=\depth(i)}e(\Delta_j)\;.
\end{equation}
Thus, it follows from Lemma \ref{lem: ib}\eqref{itm: ib-first} that $\ldrr{\m}$ is a ladder.

\begin{lemma}\label{lem: id}
Suppose that $i,i'\in I$ are such that $\Delta_{i^\vee}\subsetneq \Delta_{i'}\subsetneq \Delta_i$.
Then, $\depth_{\m}(i')< \depth_{\m}(i)$.
\end{lemma}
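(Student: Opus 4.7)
The plan is to argue by contradiction: assume $\depth_\m(i') \geq \depth_\m(i) =: k$, and split into the cases $\depth_\m(i') = k$ and $\depth_\m(i') > k$, using the admissible enumeration that defines $\sigma$ together with Lemma \ref{lem: ib}\eqref{itm: ib-second}.

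First I would fix an admissible enumeration $i^k_1,\dots,i^k_l$ of $\depth^{-1}(k)$ (so that $\Delta_{i^k_1}\supseteq\cdots\supseteq\Delta_{i^k_l}$), write $i = i^k_r$, and note that $i^\vee=\sigma(i)=i^k_{r+1}$ if $r<l$ and $i^\vee = i^k_1$ if $r=l$. The latter case is excluded immediately: it would give $\Delta_{i^\vee}=\Delta_{i^k_1}\supseteq\Delta_{i^k_l}=\Delta_i$, contradicting $\Delta_{i^\vee}\subsetneq\Delta_i$. Hence $r<l$ and $\Delta_{i^\vee}=\Delta_{i^k_{r+1}}\subsetneq\Delta_{i^k_r}=\Delta_i$.

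In Case 1, $\depth(i')=k$, so $i'=i^k_s$ for some $s$. The hypothesis becomes the strict chain $\Delta_{i^k_{r+1}}\subsetneq\Delta_{i^k_s}\subsetneq\Delta_{i^k_r}$, which is incompatible with the weakly decreasing enumeration: if $s\leq r$ then $\Delta_{i^k_s}\supseteq\Delta_{i^k_r}$, and if $s\geq r+1$ then $\Delta_{i^k_s}\subseteq\Delta_{i^k_{r+1}}$. In Case 2, $\depth(i')>k$, I would apply Lemma \ref{lem: ib}\eqref{itm: ib-second} to the triple $(i_1,i_2)=(i,i^\vee)$ and the ``middle'' index $i'$ (this uses $\depth(i)=\depth(i^\vee)$ and $\Delta_{i^\vee}\subsetneq\Delta_i$), obtaining some $j\in I$ with $\depth(j)=k$, $\Delta_{i^\vee}\subseteq\Delta_j\subseteq\Delta_i$, and $\Delta_{i'}\smlr\Delta_j$. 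Running the same admissibility argument as in Case 1 shows that $\Delta_j$ must coincide with either $\Delta_i$ or $\Delta_{i^\vee}$. But the strict containments $\Delta_{i^\vee}\subsetneq\Delta_{i'}\subsetneq\Delta_i$ yield $b(\Delta_i)\leq b(\Delta_{i'})\leq b(\Delta_{i^\vee})$, while $\Delta_{i'}\smlr\Delta_j$ demands $b(\Delta_{i'})<b(\Delta_j)$, contradicting both possibilities for $\Delta_j$.

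The main obstacle will be Case 2, specifically the step where one pins down $\Delta_j$ to the two endpoints $\Delta_i, \Delta_{i^\vee}$ rather than any intermediate depth-$k$ segment. This is the essential content of the proof: it is the admissibility of the enumeration combined with the fact that $i^\vee$ is the immediate $\sigma$-successor of $i$ (so there is literally no depth-$k$ segment strictly between $\Delta_i$ and $\Delta_{i^\vee}$) that makes Lemma \ref{lem: ib}\eqref{itm: ib-second} collapse to a manageable dichotomy. Everything else is straightforward bookkeeping with the $\smlr$-relation and segment containment.
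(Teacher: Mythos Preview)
Your approach is essentially the paper's: assume $\depth_\m(i')\ge\depth_\m(i)$, use Lemma~\ref{lem: ib}\eqref{itm: ib-second} (or take $j=i'$ in the equality case) to produce $j$ of depth $\depth_\m(i)$ with $\Delta_{i^\vee}\subseteq\Delta_j\subseteq\Delta_i$, and then contradict the fact that $i^\vee$ is the immediate $\sigma$-successor of $i$ in the admissible enumeration.

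There is, however, a slip in your final step. To rule out $\Delta_j=\Delta_{i^\vee}$ you invoke $b(\Delta_{i'})<b(\Delta_j)=b(\Delta_{i^\vee})$ and claim this clashes with $b(\Delta_{i'})\le b(\Delta_{i^\vee})$; but these two inequalities are perfectly compatible (the containment $\Delta_{i^\vee}\subsetneq\Delta_{i'}$ only gives a \emph{weak} inequality on begin points). The contradiction in this sub-case comes from the \emph{end} points: $\Delta_{i'}\smlr\Delta_j$ forces $e(\Delta_{i'})<e(\Delta_j)=e(\Delta_{i^\vee})$, while $\Delta_{i^\vee}\subsetneq\Delta_{i'}$ gives $e(\Delta_{i^\vee})\le e(\Delta_{i'})$. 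Equivalently, no segment can satisfy both $\Delta_{i^\vee}\subsetneq\Delta_{i'}$ and $\Delta_{i'}\smlr\Delta_{i^\vee}$. The paper avoids this split altogether by observing directly that both inclusions $\Delta_{i^\vee}\subseteq\Delta_j\subseteq\Delta_i$ are strict (using $b$ for one side, $e$ for the other), which already contradicts the admissible enumeration.
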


\begin{proof}
Assume on the contrary that $\depth_{\m}(i')\ge \depth_{\m}(i)$.
By Lemma \ref{lem: ib}\eqref{itm: ib-second}, there is $j\in I$, for which $\depth_{\m}(j) = \depth_{\m}(i)$, $\Delta_{i^\vee}\subseteq \Delta_{j}\subseteq \Delta_i$,
and either $i'=j$ or $\Delta_{i'}\smlr \Delta_j$. In both cases, it is clear that $\Delta_{i^\vee}\subsetneq \Delta_{j}\subsetneq \Delta_i$. This contradicts the definition of $i^\vee$.
\end{proof}

\begin{lemma}\label{lem: ic}
For all $i\in I'$ we have $\depth_{\m'}(i)\le\depth_{\m}(i)$.
\end{lemma}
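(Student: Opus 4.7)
The plan is to prove the inequality by induction on $\depth_{\m'}(i)$. The base case $\depth_{\m'}(i) = 0$ is trivial, so the induction reduces to the following key claim: if $i, i_1 \in I'$ satisfy $\Delta'_i \smlr \Delta'_{i_1}$, then $\depth_{\m}(i) > \depth_{\m}(i_1)$. Granting this, a chain $\Delta'_i \smlr \Delta'_{i_1} \smlr \cdots \smlr \Delta'_{i_n}$ in $\drv{\m}$ witnessing $\depth_{\m'}(i) = n$ gives $\depth_{\m}(i) > \depth_{\m}(i_1) \ge \depth_{\m'}(i_1) \ge n-1$ by the inductive hypothesis, hence $\depth_{\m}(i) \ge n$.

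Unpacking the definition in \eqref{def: Vien}, the relation $\Delta'_i \smlr \Delta'_{i_1}$ is equivalent to the inequalities $b(\Delta_i) < b(\Delta_{i_1})$ and $e(\Delta_{i^\vee}) < e(\Delta_{i_1^\vee})$. The crucial consequence of $i, i_1 \in I'$, i.e. that neither is the terminal index $j_k$ of its depth-class enumeration, is that $\Delta_{i^\vee} \subset \Delta_i$ and $\Delta_{i_1^\vee} \subset \Delta_{i_1}$; in particular $e(\Delta_{i_1^\vee}) \le e(\Delta_{i_1})$. Setting $k = \depth_{\m}(i)$ and $k_1 = \depth_{\m}(i_1)$, I would rule out $k \le k_1$ by contradiction, splitting into the cases $k = k_1$ and $k < k_1$.

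If $k = k_1$, then $\Delta_i$ and $\Delta_{i_1}$ lie in a common depth class, hence are comparable by inclusion; the inequality $b(\Delta_i) < b(\Delta_{i_1})$ forces $\Delta_i \supsetneq \Delta_{i_1}$, so in the enumeration $i = i^k_r$ and $i_1 = i^k_s$ with $r < s < l$. Then $i^\vee = i^k_{r+1}$ and $i_1^\vee = i^k_{s+1}$, whence $\Delta_{i^\vee} \supset \Delta_{i_1^\vee}$ and $e(\Delta_{i^\vee}) \ge e(\Delta_{i_1^\vee})$, contradicting the second inequality above. If $k < k_1$, then Lemma \ref{lem: ib}(\ref{itm: ib-first}) applied to $i_1$ supplies $i' \in I$ with $\Delta_{i_1} \smlr \Delta_{i'}$ and $\depth_{\m}(i') = k$; then $b(\Delta_i) < b(\Delta_{i'})$ forces $\Delta_i \supsetneq \Delta_{i'}$, so $i' = i^k_{r'}$ with $r' \ge r+1$, yielding $\Delta_{i^\vee} \supset \Delta_{i'}$ and therefore $e(\Delta_{i^\vee}) \ge e(\Delta_{i'}) > e(\Delta_{i_1}) \ge e(\Delta_{i_1^\vee})$, again a contradiction.

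I expect the main obstacle to be the careful bookkeeping between the enumeration-based successor map $i \mapsto i^\vee$, which governs the modified right endpoint of $\Delta'_i$, and the extrinsic depth function $\depth_{\m}$, which governs the chain combinatorics. The hypothesis $i \in I'$ enters essentially, since if $i$ were the terminal index $j_k$ of its depth class, then $\Delta_{i^\vee}$ would be the \emph{largest} segment in that class rather than a subsegment of $\Delta_i$, and the end-point inequalities $e(\Delta_{i^\vee}) \ge e(\Delta_{i'})$ and $e(\Delta_{i_1^\vee}) \le e(\Delta_{i_1})$ that drive the case analysis would collapse.
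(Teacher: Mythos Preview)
Your argument is correct and shares the paper's overall strategy: both reduce to the key claim that whenever $i,i_1\in I'$ satisfy $\Delta'_i\smlr\Delta'_{i_1}$ one has $\depth_{\m}(i)>\depth_{\m}(i_1)$, so that $\depth_{\m}$ is strictly increasing along any $\smlr$-chain in $\drv\m$. The execution differs in the case $k<k_1$. The paper invokes Lemma~\ref{lem: ib}\eqref{itm: ib-second} to produce an index $i_2$ with $\Delta_{j_c}\smlr\Delta_{i_2}\subsetneq\Delta_{j_{c+1}}$ and then argues via the defining property of $\drv\m$; you instead use only the simpler Lemma~\ref{lem: ib}\eqref{itm: ib-first} together with an explicit analysis of the position $r'$ of the resulting index in the admissible enumeration of $\depth_{\m}^{-1}(k)$, comparing it to $r+1$ to control $e(\Delta_{i^\vee})$. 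Your route is slightly more elementary in that it avoids the inductive part~\eqref{itm: ib-second}. Conversely, the paper's treatment of the equal-depth case is a one-liner (same-depth indices in $I'$ yield nested $\Delta'$'s, hence cannot be $\smlr$-related), which is what your enumeration argument effectively reproves. One stylistic remark: once the key claim is in hand, the inductive wrapping is unnecessary---strict monotonicity of $\depth_{\m}$ along the chain gives $\depth_{\m}(i)\ge n$ directly, as the paper concludes.
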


\begin{proof}
Note that when $i,j\in I'$ and $\depth_{\m}(i)=\depth_{\m}(j)$,
either $\Delta'_i=\Delta'_j$ or $\Delta'_i\subsetneq\Delta'_j$ or $\Delta'_i\supsetneq\Delta'_j$.

Suppose that $\Delta'_{j_k} \smlr \ldots \smlr \Delta'_{j_1} \smlr \Delta'_{j_0}$ for given indices $j_0,\ldots,j_k\in I'$.
We need to show that $k\leq \depth_{\m}(j_k)$.

By the remark above $\depth_{\m}(j_{c+1})\ne\depth_{\m}(j_c)$ for all $c<k$.
Suppose on the contrary that $\depth_{\m}(j_{c+1})<\depth_{\m}(j_c)$ for some $c$.
Since $b(\Delta_{j_c})=b(\Delta'_{j_c})>b(\Delta'_{j_{c+1}})=b(\Delta_{j_{c+1}})$
we necessarily have $\Delta_{j_c}\subsetneq \Delta_{j_{c+1}}$.
Also, by definition of $\m'$, there exists $i_1\in I$ such that $\depth_{\m}(i_1) = \depth_{\m}(j_{c+1})$ and
$e(\Delta_{i_1}) = e(\Delta'_{j_{c+1}})< e(\Delta'_{j_c})\le e(\Delta_{j_c})$.
Thus, $\Delta_{i_1}\subsetneq \Delta_{j_c}$ for otherwise $\Delta_{i_1}\smlr\Delta_{j_c}$
and $\depth_{\m}(j_c)<\depth_{\m}(i_1)=\depth_{\m}(j_{c+1})$ contradicting our assumption.
In particular, by Lemma \ref{lem: ib}\eqref{itm: ib-second}, there exists $i_2\in I$ such that
$\depth_{\m}(i_2) = \depth_{\m}(j_{c+1})$, $\Delta_{j_c}\smlr\Delta_{i_2}$ and $\Delta_{i_2} \subsetneq \Delta_{j_{c+1}}$.
Once again, by the definition of $\m'$, we necessarily have $e(\Delta'_{j_{c+1}})\ge e(\Delta_{i_2})$.
Hence, $e(\Delta_{j_c})\ge e(\Delta'_{j_c})>e(\Delta'_{j_{c+1}})\ge e(\Delta_{i_2})$
in contradiction to the condition $\Delta_{j_c}\smlr\Delta_{i_2}$.

We conclude that $\depth_{\m}(j_0)< \depth_{\m}(j_1)<\ldots < \depth_{\m}(j_k)$, which implies that $k\leq \depth_{\m}(j_k)$
as required.
\end{proof}

We define a map
\[
\Vien: \Mult\setminus\{0\} \to \Lad \times \Mult,\quad \Vien(\m)=(\ldrr{\m},\drv\m)\;.
\]

It is clear that $\Vien(\m)$ is well defined (i.e., does not depend on the choice
of admissible enumerations of the fibers of $\depth$).
We call $\ldrr{\m}$ the \emph{highest ladder} of $\m$ and $\drv\m$ the \emph{derived multisegment} of $\m$.

Define recursively
\[
\RSK: \Mult \to \Lads'
\]
by
\[
\RSK(0)=\emptyset,\ \ \RSK(\m)=(\ldrr{\m},\RSK(\drv\m)),\ \ \m\ne0.
\]
Adapting the discussion of \cite{MR0272654}*{\S4} to our conventions we obtain that
\[
\RSKn(\m) = (P(\RSK(\m)),Q(\RSK(\m))).
\]


\subsubsection{On the image of $\RSK$}
The fact that we consider only multisegments rather than multisets of arbitrary pairs of integers
means that there are restrictions on the image of $\RSK$. This situation is explained in Appendix \S\ref{sec:appendix},
where a full characterization of the image of (a certain variant of) $\RSK$ is given by means of \textit{key tableaux}
in the sense of Lascoux and Sch\"utzenberger, which has links to the theory of Demazure crystals.

A partial characterization from a somewhat different point of view will be given here, and will be relevant for the core of our arguments.
A combined understanding of both these approaches may be beneficial for a better understanding of the entire theme of this paper. We leave it for future endeavors.


Let $\m$ be a multisegment and $\la$ a ladder.
We say that $\la$ is dominant with respect to $\m$, if $\la$ is dominant with respect to any
ladder sub-multisegment of $\m$.
We say that the pair $(\la,\m)$ is permissible
if $(\la,\la')$ is permissible (and in particular, dominant) for any ladder sub-multisegment $\la'$ of $\m$.
Denote by $\pairs\subset \Lad \times \Mult$ the set of permissible pairs.

\begin{proposition}
The map $\Vien$ defines a bijection
\[
\Vien:\Mult\setminus\{0\} \rightarrow\pairs.
\]
Moreover, the image of the map $\RSK$ is contained in $\Lads$.
\end{proposition}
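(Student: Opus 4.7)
The plan is to prove the proposition in three stages. First, show that $\Vien(\m) \in \pairs$ for every nonzero $\m$. Second, construct an explicit two-sided inverse $\Phi$ to $\Vien$. Third, deduce from the recursion $\RSK(\m) = (\ldrr{\m}, \RSK(\drv\m))$ that the image of $\RSK$ lies in $\Lads$.

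For Stage 1, it was already noted that $\ldrr{\m}$ is a ladder via Lemma~\ref{lem: ib}\eqref{itm: ib-first} and \eqref{eq: max}. What remains is to verify that for every ladder sub-multisegment $\la'$ of $\drv\m$, the pair $(\ldrr{\m}, \la')$ is permissible. Dominance follows from \eqref{eq: max}: any segment of $\la'$ at depth $k$ in $\drv\m$ has both endpoints dominated by those of $\Delta'_{j_k}$, the depth-$k$ term of $\ldrr{\m}$. The subtler inequality $e(\Delta_r) \geq b(\Delta'_{j-i+r})$ in the definition of permissibility is verified by tracking, for each segment $\Delta'_i$ of $\la'$, the $\m$-depth of $i$, and by invoking Lemmas~\ref{lem: id} and \ref{lem: ic} to control how $\sigma$ rearranges $\smlr$-chains within depth fibers.

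For Stage 2, the inverse $\Phi \colon \pairs \to \Mult \setminus \{0\}$ is constructed depth by depth. Given $(\la, \n) \in \pairs$ with $\la = \Lambda_0 + \Lambda_1 + \cdots + \Lambda_d$ and $\Lambda_{k+1} \smlr \Lambda_k$, for each $k = 0, \ldots, d$ one collects the depth-$k$ segments of $\n$ (in decreasing inclusion order, breaking ties arbitrarily) together with $\Lambda_k$ appended last, forming a sequence $E^k_1, \ldots, E^k_{l_k}$ with $E^k_{l_k} = \Lambda_k$. The depth-$k$ segments of the target multisegment $\m$ are then recovered by cyclically rotating right endpoints by one position, namely $[b(E^k_r), e(E^k_{r-1})]$ with indices read modulo $l_k$; this literally inverts the formula $\Delta'_i = [b(\Delta_i), e(\Delta_{\sigma(i)})]$ from \eqref{def: Vien}. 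The permissibility of $(\la, \n)$ is what guarantees that these rotated segments assemble into an honest multisegment $\m$ whose depth fibers match the prescribed sizes, so that $\Vien(\m) = (\la, \n)$. The opposite identity $\Phi \circ \Vien = \id$ is immediate, since the rotation undoes $\sigma$.

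Stage 3 is a straightforward induction on $|\m|$. Given $\RSK(\drv\m) = (\la_1, \ldots, \la_r) \in \Lads$ by the inductive hypothesis, every $\la_i$ is a ladder sub-multisegment of $\drv\m$, and so Stage 1 yields $(\ldrr{\m}, \la_i) \in \pairs$ for each $i$. This gives both the consecutive dominance condition for $\Lads'$ and the full permissibility condition for $\Lads$ applied to pairs involving $\ldrr{\m}$; the remaining pairs are covered by the inductive hypothesis. The main obstacle is the permissibility inequality in Stage 1, as it requires delicate bookkeeping of how depth fibers of $\m$ interact with $\sigma$ when restricted to a sub-ladder; Stage 2 similarly requires verifying that the reconstructed multisegment has precisely the expected depth function, which is where the permissibility hypothesis is needed beyond mere dominance.
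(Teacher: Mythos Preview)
Your Stage~2 construction does not invert $\Vien$. The crux is that the segments of $\drv\m$ arising from a given depth-$k$ fiber of $\m$ do \emph{not} in general have depth $k$ in $\drv\m$; Lemma~\ref{lem: ic} only gives the inequality $\depth_{\drv\m}(i)\le\depth_{\m}(i)$. For instance, take $\m=[1,3]+[2,4]+[1,2]$. Then $\depth_{\m}([2,4])=0$ and $\depth_{\m}([1,3])=\depth_{\m}([1,2])=1$, so $\Vien(\m)=\bigl([2,4]+[1,3],\,[1,2]\bigr)$. Your $\Phi$ applied to $(\la,\n)=([2,4]+[1,3],[1,2])$ places the sole segment $[1,2]$ of $\n$ (which has depth $0$ in $\n$) together with $\Lambda_0=[2,4]$ and rotates, producing $[1,4]+[2,2]$ at level $0$ and $[1,3]$ alone at level $1$; thus $\Phi(\la,\n)=[1,4]+[2,2]+[1,3]\ne\m$. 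The paper's inverse $\Vein$ instead defines functions $g,f:I\to J$ (where $J$ indexes $\la$) by $g(i)=\max\{j:\lshft\Delta_i\smlr\Delta_j\}$ and $f(i)=\min\bigl(g(i),\{f(j)-1:\Delta_j\smlr\Delta_i\}\bigr)$; the fibers of $f$ correctly reconstruct the $\m$-depth partition from the data $(\la,\n)$ alone, and the permissibility hypothesis is exactly what makes $f$ well defined and the rotated segments genuine.

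Your Stage~3 also fails, because the ladders $\la_i$ appearing in $\RSK(\drv\m)$ are \emph{not} sub-multisegments of $\drv\m$: already $\ldrr{\n}$ is built from segments $\Delta'_i=[b(\Delta_i),e(\Delta_{\sigma(i)})]$ which need not occur in $\n$ (e.g.\ for $\n=[1,4]+[2,3]$ one gets $\ldrr{\n}=[2,4]$). Hence the appeal to Stage~1 with ``$\la_i$ a ladder sub-multisegment of $\drv\m$'' is vacuous. The paper proves instead a stability statement: if $(\la,\m)\in\pairs$ with $\m\ne0$, then both $(\la,\ldrr{\m})$ and $(\la,\drv\m)$ lie in $\pairs$. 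This is what allows the induction on $|\m|$ to go through and yields $\RSK(\m)\in\Lads$.
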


\begin{remark} \label{rem: imrsk}
The map $\RSK$ is not onto $\Lads$.
For instance, $([3,3]+[1,2],[2,3],[1,2])$ is not in the image of $\RSK$
since $\RSK([1,3]+[2,2])=([2,3],[1,2])$ but $([3,3]+[1,2],[1,3]+[2,2])$
is not permissible.

However, if we restrict $\RSK$ to the set of multisegments $\m=\sum_{i\in I}\Delta_i$ such that $b(\Delta_i)\le e(\Delta_j)$ for all $i,j\in I$, then
the image consists of the subset of $\Lads'$ consisting of the tuples $(\la_1,\dots,\la_m)\in \Lads'$
such that $b(\Delta)\le e(\Delta')$ for any $\Delta$ and $\Delta'$ which occur in $\sum\la_i$. We refer to this as the saturated case.
\end{remark}

\begin{proof}
Fix $0\neq \m\in \Mult$ as before.

It follows from Lemma \ref{lem: ib}\eqref{itm: ib-first} and equations \eqref{eq: max} that $\lshft\Delta_j\smlr\Delta'_i$ for any $i\in\ldr{I}$ and $j\in I$ such that
$\depth(j)\ge\depth(i)$. In particular, $\lshft\Delta'_j\smlr\Delta'_i$
if in addition $j\in I'$.

It then follows by Lemma \ref{lem: ic} that $\ldrr{\m}$ is dominant with respect to $\m'$.

Note that $e(\Delta_j)\ge b(\Delta_i)$ for all $i,j\in I$ with $\depth(i) = \depth(j)$. Hence, by \eqref{eq: max} and the fact that $\ldrr{\m}$ is a ladder, we see that
$e(\Delta'_j)\ge b(\Delta'_i)$ for any $i\in\ldr{I}$ and $j\in I'$ such that $\depth(j)\le\depth(i)$.

The fact that $(\ldrr{\m},\m')$ is permissible now follows again from Lemma \ref{lem: ic}. In conclusion, $\Vien(\m)\in\pairs$.

To show that $\Vien$ is a bijection we describe the inverse $\Vein:\pairs\rightarrow\Mult\setminus\{0\}$
following \cite{MR1464693}*{\S4.2}.

Suppose that $\la=\sum_{j\in J}\Delta_i$, $J=\{1,\dots,m\}$ is a ladder, such that
$\Delta_{r+1}\smlr\Delta_r$ for all $r=1,\dots,m-1$.
Let $\m=\sum_{i\in I}\Delta_i$ be a multisegment (taking the index sets $I,J$ as disjoint sets), such that $(\la,\m)$ is permissible.
In particular, $\la$ is dominant with respect to $\m$.

We define $g=g_{\m,\la}:I\rightarrow J$ and $f=f_{\m,\la}:I\rightarrow J$ by
\begin{gather*}
g(i)=\max\{j\in J:\lshft\Delta_i\smlr\Delta_j\},\\
f(i)=\min(g(i),\{f(j)-1:j\in I, \Delta_j\smlr\Delta_i\}).
\end{gather*}
Equivalently,
\[
f(i)=\min\{g(i_k)-k:\exists i=i_0,\dots,i_k\in I\text{ such that }\Delta_{i_{r+1}}\smlr\Delta_{i_r}, r=0,\dots,k-1\}.
\]
By our assumption, $f$ is well defined.
Moreover, for any $j\in J$ we may write the fiber $Y_j=f^{-1}(j)$
as $Y_j=\{i_1,\dots,i_k\}$ (possibly with $k=0$) where $\Delta_{i_{r+1}}\subset\Delta_{i_r}$, $r=1,\dots,k-1$,
$e(\Delta_{i_k})\ge b(\Delta_j)\ge b(\Delta_{i_k})$, $e(\Delta_j)\ge e(\Delta_{i_1})$.
(The condition $e(\Delta_{i_k})\ge b(\Delta_j)$ follows from the permissibility of $(\la,\m)$.)

Let $\sigma$ be the permutation of $I\cup J$ whose cycles are $(i_1,\dots,i_k,j)$ as we vary over $j\in J$.

For any $i\in I\cup J$, we set $\Delta'_i=[b(\Delta_{\sigma(i)}),e(\Delta_i)]$.
Note that we still have $\Delta'_{i_{r+1}}\subset\Delta'_{i_r}$, $r=1,\dots,k-1$
and also $\Delta'_{i_1}\subset\Delta'_j$.

Finally, set $\Vein(\la,\m)=\sum_{i\in I\cup J}\Delta'_i$. It is easy to see that $\Vein$ is the inverse of $\Vien$.

It remains to prove the last statement of the proposition regarding the map $\RSK$.
To that end, we need to show that for any pair $(\la,\m)\in\pairs$ with $\m\ne0$
we have $(\la,\ldrr{\m}), (\la,\drv{\m})\in\pairs$ as well.

Suppose that $(\la,\m)\in\pairs$.
Assume that $\{0,\dots,n\}\cap I=\emptyset$ and write $\la=\sum_{i=0}^n\Delta_i$.
Suppose that $\lshft\Delta'_i\smlr\Delta_j$ for some $i\in\ldr{I}$ and $j=0,\dots,n$
with $j$ maximal.
Let $i_1, i_2\in I$ be such that $\depth(i_1)=\depth(i_2)=\depth(i)$
and $\Delta'_i=[b(\Delta_{i_1}),e(\Delta_{i_2})]$.
Clearly $\lshft\Delta_{i_1}\smlr\Delta_j$ and $\lshft\Delta_{i_2}\smlr\Delta_j$.
Suppose that $j\ne n$. Since $\lshft\Delta'_i\not\smlr\Delta_{j+1}$ we cannot have both
$\lshft\Delta_{i_1}\smlr\Delta_{j+1}$ and $\lshft\Delta_{i_2}\smlr\Delta_{j+1}$.
Let $i'\in\{i_1,i_2\}$ be such that $\lshft\Delta_{i'}\not\smlr\Delta_{j+1}$.
If $j=n$ then take $i'=i_1$ or $i_2$ -- it makes so difference.
In either case, there exist $k_0,\dots,k_s=i'$ with $s=\depth(i)$ such that
$\Delta_{k_r}\smlr\Delta_{k_{r-1}}$, $r=1,\dots,s$ and $\depth(\Delta_{k_r})=
r$ for all $r$.
Since $(\la,\m)$ is permissible, for any $r\le s$
we have $e(\Delta_{k_r})\ge b(\Delta_{j-s+r})$. This implies that
$e(\Delta'_k)\ge b(\Delta_{j-s+r})$ where $k\in\ldr{I}$ is such that
$\depth(k)=r$. It follows that $(\la,\ldrr{\m})$ is permissible.

Next, suppose that $i_0,\dots,i_k\in\drv{I}$ with $\Delta'_{i_r}\smlr\Delta'_{i_{r-1}}$, $r=1,\dots,k$.
We claim that there exist $j_0,\dots,j_k$
such that $\Delta'_{j_r}\smlr\Delta'_{j_{r-1}}$, $r=1,\dots,k$
$\depth(j_r)=\depth(i_r)$ for all $r$ and $e(\Delta_{j_r})\le e(\Delta'_{i_r})$
for all $r$ and $e(\Delta'_{i_k})=e(\Delta_{j_k})$.

We construct $j_r$ by descending induction on $r$. Let $j_k$ be such that
$e(\Delta'_{i_k})=e(\Delta_{j_k})$ and $\depth(i_k)=\depth(j_k)$.
Suppose that $j_r$ was constructed and $r\ge0$. Let $j'$ be such that
$e(\Delta_{j'})=e(\Delta'_{i_{r-1}})$ and $\depth(j')=\depth(i_{r-1})$.
Then $e(\Delta_{j'})>e(\Delta'_{i_r})\ge e(\Delta_{j_r})$.
If $\Delta_{j_r}\smlr\Delta_{j'}$, take $j_{r-1}=j'$. Otherwise,
$\Delta_{j'}\supset\Delta_{j_r}$.
Take $j_{r-1}$ such that $\depth(j_r)=\depth(j')$ and $\Delta_{j_r}\smlr\Delta_{j_{r-1}}$.
Then necessarily $\Delta_{j_{r-1}}\subset\Delta_{j'}$ and hence
$e(\Delta_{j'})\ge e(\Delta_{j_{r-1}})$ as required.

Now let $j\in\{0,\dots,n\}$ be the maximal index such that
$\Delta_{j_k}\smlr\Delta_j$. By the permissibility of $(\la,\m)$
we have $e(\Delta_{j_r})\ge b(\Delta_{j+r-k})$ for all $r$.
On the other hand, $\Delta'_{i_k}\smlr\Delta_j$ and
therefore, if $j'\in\{0,\dots,n\}$ is the maximal index such that
$\Delta'_{i_k}\smlr\Delta_{j'}$, then $j'\ge j$.
Therefore, $e(\Delta'_{i_r})\ge e(\Delta_{j_r})\ge b(\Delta_{j+r-k})\ge b(\Delta_{j'+r-k})$.
It follows that $(\la,\drv{\m})$ is permissible.
\end{proof}

\subsubsection{An inductive description}

We finish our discussion on the RSK algorithm with the following lemma, which allows for inductive arguments in certain cases.

\begin{lemma}\label{lem: sc}
Assume that $\m\ne0$ and $\min\ldrr{\m}=\min\m$.
Let $d=d(\m)$ be the depth of $\m$ and let $i_0\in\depth_{\m}^{-1}(d)$ be such that
$\Delta_{i_0}\supset\Delta_i$ for all $i\in\depth_{\m}^{-1}(d)$.
Then
\begin{enumerate}
\item\label{it: seco} $\depth_{\m}^{-1}(d) = \{i\in I\;:\; b(\Delta_i) = \min \m,\; \Delta_{i_0}\supset\Delta_i\}\;.$
\item $\Delta_{i_0}\le\ldrr{\m}$.
\item $\sum_{i\in\depth_{\m}^{-1}(d)\setminus\{i_0\}}\Delta_i\le\drv\m$.
(It is of course not excluded that $\depth_{\m}^{-1}(d)=\{i_0\}$.)
\item
\[
\Vien\big(\m - \sum_{i\in\depth_{\m}^{-1}(d)}\Delta_i\big)=
\big(\ldrr{\m}-\Delta_{i_0},\drv\m-\sum_{i\in\depth_{\m}^{-1}(d)\setminus\{i_0\}}\Delta_i\big).
\]
\end{enumerate}
\end{lemma}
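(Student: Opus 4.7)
The plan is to establish items (1)--(3) by direct unpacking of the definitions in \S\ref{subsection: implement}, and then to deduce item (4) by showing that the $\Vien$-construction for $\m$ and for $\tilde\m := \m - \sum_{i\in\depth_\m^{-1}(d)}\Delta_i$ agrees on the complementary index set $\tilde I := I\setminus\depth_\m^{-1}(d)$.

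For (1), the hypothesis $\min\ldrr{\m}=\min\m$ together with \eqref{eq: max} applied to the deepest index yields $\max\{b(\Delta_j) : \depth(j)=d\}=\min\m$, forcing $b(\Delta_j)=\min\m$ for every $j\in\depth_\m^{-1}(d)$. Since segments of equal depth are totally ordered by inclusion, the $\supset$-maximal element $i_0$ satisfies $\Delta_{i_0}\supset\Delta_i$ for every such $i$, which is one containment. Conversely, if $i\in I$ satisfies $b(\Delta_i)=\min\m$ and $\Delta_i\subset\Delta_{i_0}$, then any chain $i_0\smlr k_1\smlr\cdots\smlr k_d$ witnessing $\depth(i_0)=d$ extends to $i\smlr k_1\smlr\cdots\smlr k_d$, because $b(\Delta_i)=b(\Delta_{i_0})<b(\Delta_{k_1})$ and $e(\Delta_i)\le e(\Delta_{i_0})<e(\Delta_{k_1})$. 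Hence $\depth(i)\ge d$ and therefore $=d$.

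For (2) and (3), fix the admissible enumeration $i^d_1,\dots,i^d_l$ of $\depth_\m^{-1}(d)$ with $i^d_1=i_0$. Then $\sigma$ cycles $i^d_1\mapsto i^d_2\mapsto\cdots\mapsto i^d_l\mapsto i^d_1$, while by (1) we have $b(\Delta_{i^d_r})=\min\m$ for every $r$. A direct calculation yields $\Delta'_{j_d}=\Delta'_{i^d_l}=[\min\m,\,e(\Delta_{i_0})]=\Delta_{i_0}$, which is a summand of $\ldrr{\m}$ and thus proves (2). Similarly, for $1\le r<l$ one gets $\Delta'_{i^d_r}=[\min\m,\,e(\Delta_{i^d_{r+1}})]=\Delta_{i^d_{r+1}}$. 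Since $i^d_1,\dots,i^d_{l-1}$ all lie in $I'$, summing gives $\sum_{r<l}\Delta'_{i^d_r}=\sum_{s=2}^l\Delta_{i^d_s}=\sum_{i\in\depth_\m^{-1}(d)\setminus\{i_0\}}\Delta_i$ as a sub-multisegment of $\drv\m$, proving (3).

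For (4), the crux is that $\depth_{\tilde\m}(i)=\depth_\m(i)$ for every $i\in\tilde I$. Indeed, in any maximal chain $h_0\smlr h_1\smlr\cdots\smlr h_k$ with $h_0=i$ realizing $\depth_\m(i)=k$, the depth of $h_r$ is exactly $k-r$: the inequality $\ge$ is immediate, and the reverse holds since a strictly longer chain from $h_r$ could be prepended to exceed length $k$. When $k<d$, no $h_r$ lies in $\depth_\m^{-1}(d)$, so the chain survives in $\tilde\m$, giving $\depth_{\tilde\m}(i)\ge k$; the reverse bound is automatic. Consequently, the depth fibers and their $\supset$-structure on $\tilde I$ coincide for $\m$ and $\tilde\m$, and one can reuse the same admissible enumerations; this leaves $j_0,\dots,j_{d-1}$ and $\sigma|_{\tilde I}$ unchanged, so each $\Delta'_i$ with $i\in\tilde I$ matches between $\m$ and $\tilde\m$. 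Combining this with (2) and (3) assembles to $\Vien(\tilde\m)=(\ldrr{\m}-\Delta_{i_0},\,\drv\m-\sum_{i\in\depth_\m^{-1}(d)\setminus\{i_0\}}\Delta_i)$, with the degenerate case $\tilde\m=0$ (which forces $d=0$) immediate from (2) and (3). The main technical obstacle lies precisely in this depth-preservation step, resting on the observation that every element of a maximum-length chain attains its exact depth within that chain, so deleting the top depth fiber cannot shorten any optimal chain in the lower strata.
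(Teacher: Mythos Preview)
Your proof is correct and follows essentially the same approach as the paper: both derive the lemma directly from the explicit description of $\Vien(\m)$ in \eqref{def: Vien} together with \eqref{eq: max}. The paper's version is far terser---after identifying the bottom segment of $\ldrr{\m}$ with $\Delta_{i_0}$ and noting part~(1), it simply asserts that the rest ``follows from the description of $\Vien(\m)$''---whereas you spell out the computation of $\Delta'$ on the top depth fiber and the depth-preservation argument for $\tilde\m$ in full.
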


\begin{proof}
Let $\Delta$ be the segment in $\ldrr{\m}$ with $b(\Delta)=\min\ldrr{\m}$.
By equation \eqref{eq: max}, $b(\Delta_i)=\min\m$ for any $i\in\depth_{\m}^{-1}(d)$.
Similarly, $e(\Delta)=e(\Delta_{i_0})$ and hence, $\Delta=\Delta_{i_0}$.

The equality in \eqref{it: seco} becomes obvious from the fact that $\depth_{\m}(i)\geq \depth_{\m}(i_0)$ for such $i\in I$.

The rest of the lemma follows from the description of $\Vien(\m)$.
\end{proof}

\section{Commutativity of algorithms}\label{sec: comm}
We turn now to study the relations between the M\oe glin--Waldspurger algorithm and RSK.

Let $\m= \sum_{i\in I} \Delta_i \in \Mult$ ba a multisegment, with notation as before. We will consider the more involved case which is not covered by Lemma \ref{lem: sc}, namely, when $\min\m<\min\ldrr{\m}$.

\begin{lemma} \label{lem: pre1}
Suppose that $\m$ is non-degenerate (see \S\ref{sec: ndgnrt}) and let $I^*$ be a set of leading indices for $\m$. (See \S\ref{sec: leading}.) Then
\begin{enumerate}
\item For every $i\in I$ we have $\depth_{\m}(i)\le\depth_{\m^{\MWn}}(i)\le\depth_{\m}(i)+1$.
\item\label{it: change} We write
\[
I_{\spcl}=\{i\in I:\exists i'\in I^*\text{ such that }b(\Delta_i)=b(\Delta_{i'}), \Delta_i\subsetneq\Delta_{i'}
\text{ and }\depth_{\m}(i)=\depth_{\m}(i')\}\;.
\]
Then, $I^*$ and $I_{\spcl}$ are disjoint. The equality $\depth_{\m^{\MWn}}(i)=\depth_{\m}(i)+1$ holds if and only if $i\in I_{\spcl}$.
Moreover, if $i\in I_{\spcl}$ and $i'\in I^*$ are such that $b(\Delta_i)=b(\Delta_{i'})$, $\Delta_i\subsetneq\Delta_{i'}$  and $\depth_{\m}(i)=\depth_{\m}(i')$, then $i'\neq i^*_{\min}$ and
$\depth_{\m^{\MWn}}(i)=\depth_{\m}(\prevind{i'})$.
\item $d(\m)=d(\m^{\MWn})$.
\item $\depth_{\m}$ is injective on $I^*$.
\end{enumerate}
\end{lemma}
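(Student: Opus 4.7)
I would prove the four assertions in the order (4), disjointness in (2), the lower bound in (1) together with the ``if'' direction of (2), and finally the upper bound in (1) together with the converse of (2), the depth formula, and (3). Part (4) is immediate: the leading chain $\Delta_{i^*_1}\smlr\dots\smlr\Delta_{i^*_k}$ prepends to any chain starting at $i^*_r$ with $r>1$ to give a strictly longer chain, so $\depth_{\m}(i^*_1)>\dots>\depth_{\m}(i^*_k)$. The disjointness $I^*\cap I_{\spcl}=\emptyset$ holds because two distinct elements of $I^*$ have distinct $b$-coordinates, so the conditions defining $I_{\spcl}$ would force an element of $I^*$ to equal its would-be partner.

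For the lower bound in (1), I would take a chain $i=i_0,\dots,i_d$ in $\m$ and modify it into a chain in $\m^{\MWn}$ as follows: at any step $r$ where $i_r\in I^*$, $i_{r+1}\notin I^*$ and $b(\Delta_{i_{r+1}})=b(\Delta_{i_r})+1$, replace $i_{r+1}$ by $\nextind{i_r}$. This index exists (the very presence of such $i_{r+1}$ precludes $i_r=i^*_{\max}$), lies in $I^*$, and the minimality condition $e(\Delta_{\nextind{i_r}})\le e(\Delta_{i_{r+1}})$ preserves the subsequent step to $i_{r+2}$. For the ``if'' direction of (2), given $i\in I_{\spcl}$ with partner $i'\in I^*$, one checks directly that $\Delta_i^*=\Delta_i\subsetneq\Delta_{i'}$ satisfies $\Delta_i^*\smlr{}^-\Delta_{i'}=\Delta_{i'}^*$ in $\m^{\MWn}$; prepending this step to a maximal $\m^{\MWn}$-chain from $i'$ yields $\depth_{\m^{\MWn}}(i)\ge\depth_{\m}(i)+1$. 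The assertion $i'\ne i^*_{\min}$ is forced: otherwise $\Delta_i\subsetneq\Delta_{i^*_{\min}}$ with $b(\Delta_i)=\min\m$ would contradict the minimality of $e(\Delta_{i^*_{\min}})$ among segments with $b=\min\m$.

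The main obstacle is the upper bound $\depth_{\m^{\MWn}}(i)\le\depth_{\m}(i)+1$ together with the converse of (2). I plan a strong induction on $\depth_{\m^{\MWn}}(i)$. Given a maximal chain $i=j_0,\dots,j_d$ in $\m^{\MWn}$, the step $j_0\to j_1$ is valid in $\m$ unless $j_0\notin I^*$, $j_1\in I^*$ and $b(\Delta_{j_0})=b(\Delta_{j_1})$, which forces $\Delta_{j_0}\subsetneq\Delta_{j_1}$. In the bad case, taking a maximal $\m$-chain $j_1=l_0,l_1,\dots,l_{\depth_{\m}(j_1)}$, the segment $\Delta_{l_1}$ satisfies $b(\Delta_{l_1})>b(\Delta_{j_1})=b(\Delta_{j_0})$ and $e(\Delta_{l_1})>e(\Delta_{j_1})>e(\Delta_{j_0})$, so $\Delta_{j_0}\smlr\Delta_{l_1}$ in $\m$; this yields a chain $j_0,l_1,\dots,l_{\depth_{\m}(j_1)}$ in $\m$ of length $\depth_{\m}(j_1)$, which by the induction applied to $j_1\in I^*$ (hence $j_1\notin I_{\spcl}$, so $\depth_{\m^{\MWn}}(j_1)=\depth_{\m}(j_1)$) equals $d-1$. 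When the first step is valid in $\m$ an analogous bookkeeping yields $\depth_{\m}(j_0)\ge d-1$ as well. This proves the upper bound; moreover, the equality $\depth_{\m^{\MWn}}(i)=\depth_{\m}(i)+1$ forces $\depth_{\m}(j_0)=d-1=\depth_{\m}(j_1)$, which together with $b(\Delta_{j_0})=b(\Delta_{j_1})$ and $\Delta_{j_0}\subsetneq\Delta_{j_1}$ exhibits $j_1$ as the (unique) partner of $j_0$ in $I_{\spcl}$. The formula $\depth_{\m^{\MWn}}(j_0)=\depth_{\m}(\prevind{j_1})$ then reduces to $\depth_{\m}(\prevind{j_1})=\depth_{\m}(j_1)+1$: the leading chain supplies $\ge$, and I would obtain the reverse by transporting a maximal chain from $\prevind{j_1}$ through $j_1$, using the minimality of $e(\Delta_{j_1})$ to show any chain from $\prevind{j_1}$ has length at most one more than a chain from $j_1$.

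Finally, part (3) is a corollary: for $i\notin I_{\spcl}$ we have $\depth_{\m^{\MWn}}(i)=\depth_{\m}(i)\le d(\m)$, while for $i\in I_{\spcl}$ with partner $i'$, the formula in (2) and the leading chain give $\depth_{\m^{\MWn}}(i)=\depth_{\m}(\prevind{i'})\le d(\m)$. Combined with the reverse inequality $d(\m^{\MWn})\ge d(\m)$ from the lower bound in (1), we conclude $d(\m)=d(\m^{\MWn})$.
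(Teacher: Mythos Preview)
Your plan for parts (4), the disjointness, the lower bound, and the ``if'' direction of (2) matches the paper and is fine. The gap is in the converse direction of (2).

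You assert that equality $\depth_{\m^{\MWn}}(j_0)=\depth_{\m}(j_0)+1$ ``forces $\depth_{\m}(j_0)=d-1=\depth_{\m}(j_1)$, which together with $b(\Delta_{j_0})=b(\Delta_{j_1})$ and $\Delta_{j_0}\subsetneq\Delta_{j_1}$ exhibits $j_1$ as the partner.'' But this only handles your ``bad case''. In the ``good case'' where $\Delta_{j_0}\smlr\Delta_{j_1}$ already holds in $\m$, equality is still possible: you get $\depth_{\m}(j_0)\ge\depth_{\m}(j_1)+1\ge(d-2)+1=d-1$ with all inequalities tight, so $\depth_{\m}(j_0)=\depth_{\m}(j_1)+1$ (not equality of depths), and by induction $j_1\in I_{\spcl}$ with some partner $i_2\in I^*$. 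This does \emph{not} directly exhibit $j_0$ as an element of $I_{\spcl}$; its would-be partner cannot be $j_1$ (wrong $b$-coordinate) nor $i_2$ (wrong depth).

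The missing step, which the paper supplies, is to set $i_3:=\prevind{i_2}\in I^*$ and verify that $i_3$ is the partner of $j_0$. One first checks $e(\Delta_{i_3})\ge e(\Delta_{j_1})$ (from the minimality defining $i_2$ as the successor of $i_3$, since $j_1\notin I^*$), whence $\Delta^*_{j_0}\smlr\Delta^*_{i_3}$; applying the induction hypothesis to $i_3\in I^*$ and comparing with the chain through $i_2$ pins down $\depth_{\m}(i_3)=\depth_{\m}(j_0)$. Finally $b(\Delta_{i_3})=b(\Delta_{j_0})$ is forced, since otherwise $\Delta_{j_0}\smlr\Delta_{i_3}$ would give $\depth_{\m}(j_0)>\depth_{\m}(i_3)$. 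This argument also shows $j_0\notin I^*$ (two leading indices cannot share a $b$-coordinate), so $j_0\in I_{\spcl}$ with partner $i_3$.

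A secondary point: your sketch for $\depth_{\m}(\prevind{i'})=\depth_{\m}(i')+1$ invokes only ``the minimality of $e(\Delta_{j_1})$'', which handles the case $b(\Delta_j)=b(\Delta_{i'})$ of a hypothetical longer chain step $\Delta_{\prevind{i'}}\smlr\Delta_j$. The complementary case $b(\Delta_j)>b(\Delta_{i'})$ requires the element $i\in I_{\spcl}$ itself: one checks $\Delta_i\subset\Delta_{\prevind{i'}}$ (again from minimality), hence $\Delta_i\smlr\Delta_j$, contradicting $\depth_{\m}(i)=\depth_{\m}(i')<\depth_{\m}(j)$.
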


\begin{remark}
In general, it is not true that $\depth_{\m}(\prevind{i})=\depth_{\m}(i)+1$
for all $i\in I^*\setminus\{i^*_{\min}\}$.
\end{remark}

\begin{proof}
Let $i\in I$.
To show that $\depth_{\m}(i)\le\depth_{\m^{\MWn}}(i)$ suppose that
$\Delta_{i_1}\smlr\dots\smlr\Delta_{i_k}$ for some $i_1,\dots,i_k\in I$ with $i_1=i$.
We claim that we can choose these indices so that whenever $i_r\in I^*$
for some $r$, either $r=k$ or $b(\Delta_{i_{r+1}})>b(\Delta_{i_r})+1$ or
$b(\Delta_{i_{r+1}})=b(\Delta_{i_r})+1$ and $i_{r+1}\in I^*$.
Indeed, whenever $i_r\in I^*$ with $r<k$ and $b(\Delta_{i_{r+1}})=b(\Delta_{i_r})+1$ with $i_{r+1}\notin I^*$
we may replace $i_{r+1}$ by the leading index $i$ such that $b(\Delta_i)=b(\Delta_{i_{r+1}})$.
Iterating this process we will get the required property. With this extra property we have
$\Delta^*_{i_1}\smlr\dots\smlr\Delta^*_{i_k}$. Thus $\depth_{\m}(i)\le\depth_{\m^{\MWn}}(i)$.

It is clear that $I_{\spcl}\cap I^*=\emptyset$ and that if $i\in I_{\spcl}$ then $\depth_{\m^{\MWn}}(i)>\depth_{\m}(i)$.
Indeed, if $i'\in I^*$ is such that $b(\Delta_i)=b(\Delta_{i'})$, $\Delta_i\subsetneq\Delta_{i'}$
and $\depth_{\m}(i)=\depth_{\m}(i')$ then $\Delta_i^*=\Delta_i\smlr\Delta^*_{i'}$ and therefore
$\depth_{\m^{\MWn}}(i)>\depth_{\m^{\MWn}}(i')\ge \depth_{\m}(i')=\depth_{\m}(i)$.
Moreover, it is clear from the definition of $i^*_{\min}$ that $i'\neq i^*_{\min}$ and since $i'\in I^*$ we have $\Delta_{\prevind{i'}}\supset\Delta_i$. We claim that
$\depth_{\m}(i')+1=\depth_{\m}(\prevind{i'})$. Clearly,
$\depth_{\m}(\prevind{i'})>\depth_{\m}(i')$. If $\depth_{\m}(\prevind{i'})>\depth_{\m}(i')+1$ then there exists $j\in I$ such that
$\Delta_{\prevind{i'}}\smlr\Delta_j$ and $\depth_{\m}(j)>\depth_{\m}(i')$.
If $b(\Delta_j)=b(\Delta_{i'})$ then $\Delta_j\supset \Delta_{i'}$ since
$i'\in I^*$ and we would get a contradiction. Otherwise, $\Delta_i\smlr\Delta_j$
and again we get a contradiction.

It remains to show that $\depth_{\m^{\MWn}}(i)\le\depth_{\m}(i)+1$ for all $i\in I$
with equality only if $i\in I_{\spcl}$.
We prove this by descending induction on $e(\Delta_i)$.

The statement is trivial if $\depth_{\m^{\MWn}}(i)=0$. Suppose that $\depth_{\m^{\MWn}}(i)>0$.
Then there exists $i_1\in I$ such that $\depth_{\m^{\MWn}}(i)=\depth_{\m^{\MWn}}(i_1)+1$ and $\Delta^*_{i}\smlr\Delta^*_{i_1}$.
If $\Delta_i\not\smlr\Delta_{i_1}$ then $i_1\in I^*$, $b(\Delta_i)=b(\Delta_{i_1})$ and $\Delta_i\subsetneq\Delta_{i_1}$.
Our claim follows in this case since $\depth_{\m}(i_1)\le\depth_{\m}(i)$ and
by induction hypothesis $\depth_{\m^{\MWn}}(i_1)=\depth_{\m}(i_1)$.

Assume therefore that $\Delta_i\smlr\Delta_{i_1}$. Then $\depth_{\m}(i)\ge\depth_{\m}(i_1)+1$.
It follows from the induction hypothesis that
\[
\depth_{\m^{\MWn}}(i)=\depth_{\m^{\MWn}}(i_1)+1\le\depth_{\m}(i_1)+2\le\depth_{\m}(i)+1.
\]
Assume that $\depth_{\m^{\MWn}}(i)=\depth_{\m}(i)+1$. Then, again by induction hypothesis $i_1\in I_{\spcl}$, i.e.
there exists $i_2\in I^*$ such that  $b(\Delta_{i_2})=b(\Delta_{i_1})$, $\Delta_{i_1}\subsetneq\Delta_{i_2}$
and $\depth_{\m}(i_1)=\depth_{\m}(i_2)$. In particular $i_2\ne i^*_{\min}$.
Let $i_3=\prevind{i_2}\in I^*$.
Since $i_1\notin I^*$ we must have $e(\Delta_{i_3})\ge e(\Delta_{i_1})$.
Hence $\Delta^*_i\smlr\Delta^*_{i_3}$ and therefore $\depth_{\m}(i_3)\le\depth_{\m^{\MWn}}(i)-1=\depth_{\m}(i)$.
(Note that $\depth_{\m}(i_3)=\depth_{\m^{\MWn}}(i_3)$ by induction hypothesis.)
On the other hand, $\depth_{\m}(i_3)\ge\depth_{\m}(i_2)+1=\depth_{\m}(i_1)+1=\depth_{\m^{\MWn}}(i_1)=\depth_{\m^{\MWn}}(i)-1$.
Thus, $\depth_{\m}(i_3)=\depth_{\m}(i)$. Also, $b(\Delta_{i_3})=b(\Delta_i)$ for otherwise
$\Delta_i\smlr\Delta_{i_3}$ and then $\depth_{\m}(i)\ge\depth_{\m}(i_3)+1$, in contradiction
to what we just proved.

Finally, the last part of the lemma is evident.
\end{proof}

\begin{proposition} \label{lem: main2}
Suppose that $\m\ne0$ and $\min\m<\min\ldrr{\m}$. Then
\begin{enumerate}
\item\label{it: main-one} For any $i\in I^*$, there exists $j \in I$ such that $b(\Delta_j)>b(\Delta_i)$
and $\depth_{\m}(i)=\depth_{\m}(j)$. In particular, $\m$ is non-degenerate.
\item For any $i\in I^*$, let $i^\#\in I$ be the distinguished index (see \S\ref{sec: distin}) such that $b(\Delta_i)=b(\Delta_{i^\#})$,
$\depth_{\m}(i)=\depth_{\m}(i^\#)$ and $e(\Delta_{i^\#})$ is minimal with respect to these properties.
Then,
\[
I_{\sharp}=\{i^\#:i\in I^*\}
\]
is a set of leading indices for $\drv\m$.
In particular, $\Delta^{\nMW}(\m)=\Delta^{\nMW}(\drv\m)$.
\item Assume (as we may) that $I^*$ consists of distinguished indices. In particular, $i=i^\#$
whenever $\Delta_i=\Delta_{i^\#}$, for $i\in I^*$.
Let $\tau$ be the permutation of the index set $I$ defined by $\tau(i)=i$ for all $i\notin I^*\cup I_{\sharp}$,
$\tau(i)=\prevind{i}$ for all $i\in I^*\setminus I_{\sharp}$, and
\[
\tau(i^\#)=\begin{cases}(\nextind{i})^\#&\text{ if }i\ne i^*_{\max}\text{ and }(\nextind{i})^\#\neq \nextind{i} ,\\
i&\text{otherwise}\end{cases}
\]
for all $i\in I^*$.
Then for any $i\in I$,
\[
(\Delta_{\tau(i)}^*)'=\begin{cases}{}^-\Delta_i'&i\in I_{\sharp},\\
\Delta_i'&\text{otherwise.}\end{cases}
\]
\end{enumerate}
\end{proposition}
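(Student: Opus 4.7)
The plan is to tackle the three parts in order, with the intricate bookkeeping in (3) as the main obstacle.

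For (1), the key observation is that $f(k) := \max_{\depth_{\m}(j)=k}b(\Delta_j)$ is a strictly decreasing function of $k\in\{0,\dots,d\}$: for any index at depth $k+1$, Lemma~\ref{lem: ib}(\ref{itm: ib-first}) produces a $\smlr$-successor at depth $k$, forcing $f(k)\ge f(k+1)+1$ by integrality. Since by \eqref{eq: max} we have $f(d)=\min\ldrr{\m}>\min\m$, iterating gives $f(k)\ge\min\m+1+(d-k)$ for every $k$. On the other hand, $\Delta_{i^*_s}\smlr\Delta_{i^*_{s+1}}$ forces $\depth_{\m}(i^*_1)>\dots>\depth_{\m}(i^*_k)$, whence $\depth_{\m}(i^*_s)\le d-s+1$. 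Combining, $f(\depth_{\m}(i^*_s))\ge\min\m+s>b(\Delta_{i^*_s})$, producing the required $j$. For non-degeneracy, if $\Delta_i=[\min\m,\min\m]$ for some $i\in I$, then the $e$-minimality in the definition of $i^*_1$ forces $\Delta_{i^*_1}=[\min\m,\min\m]$ as well; applying (1) to $i^*_1$ yields a $j$ at the same depth with $b(\Delta_j)>\min\m$, and comparability at equal depth forces $\Delta_{i^*_1}\supsetneq\Delta_j$, contradicting $e(\Delta_{i^*_1})=\min\m<b(\Delta_j)\le e(\Delta_j)$.

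For (2), I first verify $i^\#\in I'$ for each $i\in I^*$: at depth $r_s:=\depth_{\m}(i^*_s)$, the element $j_{r_s}\in\ldr{I}$ is the minimal segment in the $\supset$-chain and hence attains the \emph{maximal} $b$-value $f(r_s)$ at depth $r_s$, which by (1) strictly exceeds $b(\Delta_{(i^*_s)^\#})=b(\Delta_{i^*_s})$, so $j_{r_s}\ne(i^*_s)^\#$. Having placed $I_\sharp\subset I'$, I then check the leading structure of $I_\sharp$ for $\drv\m$ by induction on $s$: the equality $b(\Delta'_{(i^*_s)^\#})=b(\Delta_{(i^*_s)^\#})=\min\m+s-1$ supplies the required arithmetic progression; $\min\drv\m=\min\m$ is realized by $i^*_1\in I'$; and the $\smlr$-chain together with the $e$-minimality at each step follows by combining the description of $\sigma$ as the cyclic permutation on each depth fibre (sending $(i^*_s)^\#$ to the next element of the $\supset$-enumeration) with the distinguished/min-$e$ characterization of $i^\#$. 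The equality $\Delta^{\nMW}(\drv\m)=\Delta^{\nMW}(\m)$ then follows since $\abs{I_\sharp}=\abs{I^*}$ and the extremal $b$-values agree.

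For (3), I would case-split on $i$ and compute $(\Delta_{\tau(i)}^*)'_{\m^{\MWn}}=[b(\Delta_{\tau(i)}^*),\,e(\Delta_{\sigma'(\tau(i))}^*)]$ directly, where $\sigma'$ denotes the cyclic permutation arising from the admissible enumerations for $\m^{\MWn}$. The definition of $\tau$ is calibrated so that: for $i\notin I^*\cup I_\sharp$ the cycle of $\sigma'$ containing $i$ agrees with that of $\sigma$ up to ${}^-\Delta$ shifts at $I^*$-indices that cancel when one reads off $e$-values; for $i\in I^*\setminus I_\sharp$, the index $\prevind{i}$ inherits $i$'s role in the enumeration after being promoted in depth by Lemma~\ref{lem: pre1}(\ref{it: change}); and for $i^\#\in I_\sharp$, the forwarding $i^\#\mapsto(\nextind{i})^\#$ reroutes $\sigma'$ to the next distinguished index at depth $r_s$, with the degenerate case $(\nextind{i})^\#=\nextind{i}$ terminating the cycle and making $\tau$ fix $i^\#$. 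The main obstacle is to simultaneously track (a) which segments are replaced by ${}^-\Delta$, (b) how the admissible enumerations at each depth of $\m^{\MWn}$ are obtained from those of $\m$ (accounting for the $I_\spcl$-indices whose depth increases), and (c) how distinguishedness of indices transfers between the two multisegments. Once these are reconciled, verifying the piecewise identity against ${}^-\Delta'_i$ and $\Delta'_i$ reduces to matching endpoints $b$ and $e$ in each case.
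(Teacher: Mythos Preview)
Your argument for part~(1) is correct and is in fact cleaner than the paper's: rather than a minimal-counterexample contradiction, you prove directly that $f(k)=\max_{\depth_{\m}(j)=k}b(\Delta_j)$ satisfies $f(k)\ge f(k+1)+1$, combine this with $f(d)=\min\ldrr{\m}>\min\m$ and $\depth_{\m}(i^*_s)\le d-s+1$, and read off $f(\depth_{\m}(i^*_s))>b(\Delta_{i^*_s})$. This buys you a quantitative bound the paper does not state, and the non-degeneracy deduction is equivalent.

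Parts~(2) and~(3), however, are plans rather than proofs, and in~(2) there is a genuine gap. You correctly observe $I_\sharp\subset I'$ and that $b(\Delta'_{(i^*_s)^\#})=\min\m+s-1$, but the substance of~(2) is precisely what you wave at: showing that $e(\Delta'_{(i^*_s)^\#})$ is \emph{minimal} among segments $\Delta'_j$ of $\drv\m$ with the correct begin point and $\Delta'_{(i^*_{s-1})^\#}\smlr\Delta'_j$, and that no such segment exists past $s=k$. Your appeal to ``the description of $\sigma$'' does not yield this: for a competing $l\in I'$ with $b(\Delta_l)=b(\Delta_{i^*_s})$, the end point $e(\Delta'_l)=e(\Delta_{l^\vee})$ depends on an index $l^\vee$ whose depth equals $\depth_{\m}(l)$, not $\depth_{\m}(i^*_s)$, and ruling out $e(\Delta_{l^\vee})<e(\Delta'_{(i^*_s)^\#})$ requires comparing segments across different depth levels. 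The paper accomplishes this through repeated, delicate use of Lemma~\ref{lem: id} (which forbids $\Delta_{i^\vee}\subsetneq\Delta_{i'}\subsetneq\Delta_i$ with $\depth_{\m}(i')\ge\depth_{\m}(i)$) together with Lemma~\ref{lem: ib}(\ref{itm: ib-first}); these lemmas do not appear in your sketch at all, and without them the minimality and termination claims are unsupported.

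For~(3) your outline is along the same lines as the paper's proof, which indeed uses Lemma~\ref{lem: pre1}(\ref{it: change}) to identify $\depth_{\m^{\MWn}}^{-1}(t)=\depth_{\m}^{-1}(t)\cup J_{\nextind{i}}\setminus J_i$ and then explicitly writes down the new admissible enumeration. Your items (a)--(c) name the right concerns, but you have not carried out the computation; the paper's argument shows that the bookkeeping, while tedious, is straightforward once one has this explicit description of the fibres.
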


\begin{proof}
\begin{enumerate}
\item
Assume on the contrary that $i\in I^*$ and $b(\Delta_j)\le b(\Delta_i)$ whenever $\depth_{\m}(j)=\depth_{\m}(i)$.
Assume further that $b(\Delta_i)$ is minimal with respect to this property.

If $i=i^*_{\min}$, then $\depth_{\m}(i)=d(\m)$, because we cannot have $\Delta_j\smlr\Delta_{i'}$,
for any $j\in I$ and $i'\in\depth_{\m}^{-1}(\depth_{\m}(i))$. However, in this case we will get a contradiction to the
assumption that $\min\m<\min\ldrr{\m}$.

Suppose that $i\ne i^*_{\min}$ and let $j\in I$ be any index such that $\depth_{\m}(j)=\depth_{\m}(\prevind{i})$.
Then $\depth_{\m}(j)>\depth_{\m}(i)$ and therefore
there exists $i'\in I$ such that $\Delta_j\smlr\Delta_{i'}$ and $\depth_{\m}(i')=\depth_{\m}(i)$.
By our assumption $b(\Delta_j)<b(\Delta_{i'})\le b(\Delta_i)$
and hence $b(\Delta_j)\le b(\Delta_{\prevind{i}})$.
We get a contradiction to the minimality of $i$.

The non-degeneracy part is clear, since if $b(\Delta_i)=e(\Delta_i)$ had been satisfied for some $i\in I^*$, and
$j\in I$ had been such that $b(\Delta_j)>b(\Delta_i)$, then $\Delta_i\smlr\Delta_j$ would have implied
$\depth_{\m}(i)>\depth_{\m}(j)$.

\item
It follows from part \eqref{it: main-one} that $I_{\sharp}\cap \ldr{I}= \emptyset$.
In particular, $e(\Delta_{(i^\#)^\vee}) \le e(\Delta_{i^\#})$, for all $i\in I^*$.
Also, since $i^\#$ is distinguished, its defining property imposes
\begin{equation}\label{eq: dist}
b(\Delta_{i^\#})< b(\Delta_{(i^\#)^\vee})\;.
\end{equation}
Clearly, $(i^*_{\min})^\# =i^*_{\min}\in I_{\sharp}$.
We first claim that $\Delta'_{i^*_{\min}}$ is the shortest segment of $\drv\m$ which begins at $\min\m$.

Suppose on the contrary that this is not the case.
Then, $\Delta'_i \subsetneq \Delta'_{i^*_{\min}} \subseteq \Delta_{i^*_{\min}}$ for some $i\in I$ with $b(\Delta_i) = \min \m$.
Thus, $e(\Delta_{i^\vee}) = e(\Delta'_i)$ means $\Delta_{i^\vee}\subsetneq \Delta_{i^*_{\min}}$. On the other hand, by the defining property of $i^*_{\min}$, $\Delta_{i^*_{\min}}\subsetneq \Delta_i$ (inequality because of $\Delta'_i \neq \Delta'_{i^*_{\min}}$). Yet, because of $b(\Delta_i) = b(\Delta_{i^*_{\min}})$ we have $\depth_{\m}(i) \le \depth_{\m}(i^*_{\min})$, which now contradicts Lemma \ref{lem: id}.

Now, let $i\in I^*$ with $i\ne i^*_{\max}$ be fixed.
To ease the notation, set $j = i^\#, j' = (\nextind{i})^\#,  k = j^\vee, k' = (j')^\vee\in I$.

We have $\Delta_i\smlr\Delta_{\nextind{i}}$ and therefore
\[
\depth_{\m}(i)=\depth_{\m}(j)=\depth_{\m}(k)> \depth_{\m}(\nextind{i}) = \depth_{\m}(j')= \depth_{\m}(k')\;.
\]

By \eqref{eq: dist}, $b(\Delta_k)\geq b(\Delta_j)+1 = b(\Delta_{j'})$. Thus, $\Delta_k\subsetneq \Delta_{j'}$, since otherwise we would have $\depth_{\m}(j')\geq \depth_{\m}(k)$.
So, by Lemma \ref{lem: id}, we cannot have $\Delta_{k'}\subset \Delta_k$. The depth inequality also forbids the condition $\Delta_{k'} \smlr \Delta_k$.
Hence, we must have $e(\Delta_k)\leq e(\Delta_{k'})$. Now, an equality $e(\Delta_k)= e(\Delta_{k'})$ together with the implied containment
$\Delta_{k}\subset \Delta_{k'}$ would again contradict the depth inequality. Summing up, $e(\Delta_k)< e(\Delta_{k'})$, which means $\Delta'_j \smlr \Delta'_{j'}$.

Next, we prove that with $i,j,k,j',k'\in I$ as before, there does not exist a segment $\Delta$ of $\drv\m$
such that $\Delta'_j\smlr\Delta$, $b(\Delta)=b(\Delta'_j)+1(=b(\Delta_i)+1)$ and
$\Delta\subsetneq\Delta'_{j'}$.

Suppose otherwise. Then such a segment satisfies $\Delta = [b(\Delta_l), e(\Delta_{l^\vee})]$, for an index $l\in I'$.
By the assumptions, $b(\Delta_l)=b(\Delta_i)+1$, $e(\Delta_k)<e(\Delta_{l^\vee})<e(\Delta_{k'})$ and $ e(\Delta_{l^\vee})\leq e(\Delta_{l})$.

In particular, $\Delta_k \subsetneq \Delta_l$. Now, either $\Delta_j \smlr \Delta_l$ or $\Delta_l \subsetneq \Delta_j$.
By applying Lemma \ref{lem: id} in the latter case, we obtain $\depth_{\m}(l) < \depth_{\m}(j)$ in both cases.

If $\Delta_i\smlr \Delta_l$, we set $m=l$. Otherwise, by Lemma \ref{lem: ib}\eqref{itm: ib-first}, there is $m\in I$, such that $\Delta_i\smlr \Delta_m$ and $\depth_{\m}(m) = \depth_{\m}(l)$.
In that case, $e(\Delta_l)\leq e(\Delta_i)<e(\Delta_m)$ forces $b(\Delta_m) = b(\Delta_i)+1$.

By the definition of $\nextind{i}$ we have $\Delta_{\nextind{i}}\subset \Delta_m$, which implies that $\depth_{\m}(\nextind{i}) \geq \depth_{\m}(m)$.

On the other hand, we have either $\Delta_l \smlr \Delta_{k'}$ or $\Delta_{l^\vee} \smlr \Delta_{k'}$ or $\Delta_{l^\vee} \subsetneq \Delta_{k'} \subsetneq \Delta_l$.
In all three case, with Lemma \ref{lem: id} for the latter, we reach a contradiction to the depth inequality.

Finally, set $j_{\max}=(i^*_{\max})^\#$.
We are left to show that there is no segment $\Delta$ of $\drv\m$ such that $\Delta'_{j_{\max}}\smlr\Delta$
and $b(\Delta)=b(\Delta_{i^*_{\max}})+1(=b(\Delta'_{j_{\max}})+1)$.

Assume the contrary. Then, arguing like before, we obtain $m\in I$ with $\Delta_i \smlr \Delta_m$ and $b(\Delta_m) = b(\Delta_{i^*_{\max}})+1$.
This contradicts the defining property of $i^*_{\max}$.
\item
First note that it follows from Lemma \ref{lem: pre1} that
$(\Delta_i^*)'=\Delta_i'$, for all $i\in I$ with $\depth_{\m}(i)\notin\depth_{\m}(I^*)$.

For any $i\in I^*$, let
\[
J_i=\{j\in I:\depth_{\m}(j)=\depth_{\m}(i),b(\Delta_j)=b(\Delta_i)\text{ and }\Delta_j\subsetneq\Delta_i\}.
\]
Thus, $J_i=\emptyset$ if and only if $i=i^\#$ (since both are distinguished).
For convenience we set $J_{\nextind{i}}=\emptyset$ when $i=i^*_{\max}$.
By Lemma \ref{lem: pre1}, we have
\[
\depth_{\m^{\MWn}}^{-1}(t)=\depth_{\m}^{-1}(t)\cup J_{\nextind{i}}\setminus J_i\;,
\]
for $t=\depth_{\m}(i)$.

Let $\{i_1,\dots,i_l\}$ be the admissible enumeration of $\depth_{\m}^{-1}(t)$.
Then the indices of $J_i$ (if non-empty) appear as a contiguous block (in decreasing order of $e(\Delta_j)$, ending with $i^\#$)
right after the occurrence of $i$ (since $i$ is distinguished).
Upon removing the indices of $J_i$ (if any) and inserting instead the indices of $J_{\nextind{i}}$ next to $i$
(again, in decreasing order of $e(\Delta_j)$, ending with $(\nextind{i})^\#$)
we obtain an admissible enumeration $\{i'_1,\dots,i'_{l'}\}$ of $\depth_{\m^{\MWn}}^{-1}(t)$ (with respect to
$\m^{\MWn}=\sum_{i\in I}\Delta_i^*$).

It follows that
\[
(\Delta^*_i)'=\begin{cases}\Delta_{\nextind{i}}'&\text{if }J_{\nextind{i}}\ne\emptyset,\\
{}^-\Delta_{i^\#}'&\text{otherwise,}\end{cases}
\]
while if $J_{\nextind{i}}\ne\emptyset$, then
\[
(\Delta^*_{(\nextind{i})^\#})'={}^-\Delta_{i^\#}'.
\]
For all $i'\in\depth_{\m^{\MWn}}^{-1}(t)\setminus\{i,(\nextind{i})^\#\}$, we have $(\Delta^*_{i'})'=\Delta_{i'}'$.
In particular, $i_l=i'_{l'}$ and $(\Delta^*_{i_l})'=\Delta_{i_l}'$.
\end{enumerate}
The proposition follows.
\end{proof}

\begin{corollary}\label{cor: main}
For any $0\ne\m\in \Mult$ with $\min\m<\min\ldrr{\m}$ we have
\[
(\Vien\times \id)(\MW(\m))=(\id\times \MW)(\Vien(\m))\;.
\]
In other words, $\ldrr{\m} = \ldrr{\m^{\MWn}}$, $\Delta^{\nMW}(\m)=\Delta^{\nMW}(\drv\m)$ and $(\m^{\MWn})' = (\m')^{\MWn}$.
\end{corollary}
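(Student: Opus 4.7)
The plan is to treat this corollary as the clean packaging of the technical combinatorial data already extracted in Proposition~\ref{lem: main2}; essentially no new combinatorics is needed. First, one observes that the hypothesis $\min\m<\min\ldrr{\m}$ invokes part (1) of Proposition~\ref{lem: main2}, which guarantees $\m$ is non-degenerate so that $\MW(\m)$ is well defined, and moreover produces the set $I_{\sharp}=\{i^\#:i\in I^*\}$ of distinguished indices in $I'$. The identity $\Delta^{\nMW}(\m)=\Delta^{\nMW}(\drv\m)$ is then literally the assertion of Proposition~\ref{lem: main2}(2), so this equality requires nothing further.

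Next I would tackle $\ldrr{\m}=\ldrr{\m^{\MWn}}$. Since Lemma~\ref{lem: pre1} gives $d(\m)=d(\m^{\MWn})$, both highest ladders have the same number of segments, and each is described via \eqref{eq: max}. For any depth $t\notin \depth_{\m}(I^*)$, the proof of Proposition~\ref{lem: main2}(3) notes that $(\Delta_i^*)'=\Delta_i'$ for every $i\in \depth_{\m}^{-1}(t)$. For the remaining depths $t\in\depth_{\m}(I^*)$, the same proof shows that the last element $i_l=j_t$ of an admissible enumeration of $\depth_{\m}^{-1}(t)$ is also the last element of an admissible enumeration of $\depth_{\m^{\MWn}}^{-1}(t)$ and satisfies $(\Delta_{j_t}^*)'=\Delta_{j_t}'$. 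Since $\ldrr{\m^{\MWn}}=\sum_t (\Delta_{j_t}^*)'=\sum_t \Delta_{j_t}'=\ldrr{\m}$, the second equality follows.

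For the derived part $(\m^{\MWn})'=(\m')^{\MWn}$, the key step is to sum the identity from Proposition~\ref{lem: main2}(3) over the permutation $\tau$ of $I$. Since $\tau$ is a bijection,
\[
\ldrr{\m^{\MWn}}+(\m^{\MWn})'=\sum_{i\in I}(\Delta_i^*)'=\sum_{i\in I}(\Delta_{\tau(i)}^*)'
=\sum_{i\notin I_{\sharp}}\Delta_i' \; +\;\sum_{i\in I_{\sharp}}{}^-\Delta_i'.
\]
Because $I_{\sharp}\subseteq I'$ and $I_{\sharp}$ is a set of leading indices for $\drv\m$ by Proposition~\ref{lem: main2}(2), the right-hand side is exactly $\ldrr{\m}+(\drv\m)^{\MWn}$ (the MW step replaces each leading segment $\Delta_i'$ by ${}^-\Delta_i'$ and leaves the others alone). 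Combined with the equality $\ldrr{\m^{\MWn}}=\ldrr{\m}$ already established, this yields $(\m^{\MWn})'=(\drv\m)^{\MWn}$.

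I do not anticipate a serious obstacle here: the entire content of the corollary is already encoded in Proposition~\ref{lem: main2}, and what remains is purely bookkeeping. The only mildly delicate point is checking the bookkeeping carefully, namely that $I_{\sharp}\cap \ldr{I}=\emptyset$ (so that the two summations in the final identity correctly split between the highest-ladder part and the derived part), and that ``distinguished'' versus ``leading'' indices are handled consistently with the formula for $\tau$. Once those index-set identifications are made explicit, all three equalities drop out simultaneously.
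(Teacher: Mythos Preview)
Your proposal is correct and follows essentially the same approach as the paper: both derive the three equalities directly from Proposition~\ref{lem: main2}, with the key observations being $I^*\cap\ldr{I}=I_{\sharp}\cap\ldr{I}=\emptyset$ (from part~(1)) and the index-by-index identity of part~(3). Your write-up is simply more explicit in the bookkeeping---spelling out via Lemma~\ref{lem: pre1} and the admissible-enumeration details why $\ldr{I}(\m)=\ldr{I}(\m^{\MWn})$ with matching $\Delta'$-segments, and summing the formula of part~(3) over $\tau$ to extract $(\m^{\MWn})'=(\drv\m)^{\MWn}$---whereas the paper's proof compresses all of this into two sentences.
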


\begin{proof}
It follows from Proposition \ref{lem: main2}\eqref{it: main-one} that $I^*\cap \ldr{I}= I_{\sharp}\cap \ldr{I}= \emptyset$.
Hence, $\ldrr{\m} = \ldrr{\m^{\MWn}}$.
The rest of Proposition \ref{lem: main2} shows that $\Delta^{\nMW}(\m)=\Delta^{\nMW}(\drv\m)$ and $(\m^{\MWn})' = (\m')^{\MWn}$.
\end{proof}

\section{Representation theoretic applications}\label{sec: rep}

\subsection{Basics}
For the rest of the paper we fix a non-archimedean local field $F$ with normalized absolute value $\abs{\cdot}$
and consider representations of the general linear groups $\GL_n(F)$, $n\ge0$.
All representations are implicitly assumed to be complex and smooth.

For any segment $\Delta=[a,b]\in \Seg$, we write $Z(\Delta)$ and $L(\Delta)$ for the character $\abs{\det}^{\frac{a+b}2}$ of $\GL_{b-a+1}(F)$
and the Steinberg representation of $\GL_{b-a+1}(F)$ twisted by $\abs{\det}^{\frac{a+b}2}$, respectively.

Normalized parabolic induction will be denoted by $\times$. More precisely, if $\pi_i$ are representations
of $\GL_{n_i}(F)$, $i=1,\dots,k$ and $n=n_1+\dots+n_k$, we write
\[
\pi_1\times\dots\times\pi_k=\operatorname{Ind}_{P_{n_1,\dots,n_k}(F)}^{\GL_n(F)}\pi_1\otimes\dots\otimes\pi_k\;,
\]
where $P_{n_1,\dots,n_k}$ is the parabolic subgroup of $\GL_n$ consisting of upper block triangular matrices
with block sizes $n_1,\dots,n_k$ and $\pi_1\otimes\dots\otimes\pi_k$ is considered as a representation of
$P_{n_1,\dots,n_k}$ via the pull-back from $\GL_{n_1}(F)\times\dots\times\GL_{n_k}(F)$.

Given a multisegment $\m\in\Mult$, we can write it (in possibly several ways) as $\m=\sum_{i=1}^k\Delta_i$,
where for any $i<j$, we have $\Delta_i\not\smlr\Delta_j$.
Then the representations
\[
Z(\m)=\soc(Z(\Delta_1)\times\dots\times Z(\Delta_k))\;,
\]
\[
L(\m)=\soc(L(\Delta_k)\times\dots\times L(\Delta_1))\;,
\]
are both irreducible and, up to equivalence, depend only on $\m$.
(We recall that $\soc$ stands for the socle, i.e., the sum of all irreducible subrepresentations.)
The M\oe glin--Waldspurger involution switches the two, namely
\[
L(\m)\cong Z(\m^\#),\ \ Z(\m)\cong L(\m^\#).
\]
In fact, the map $Z(\m)\mapsto L(\m)$ is a specialization of a functorial duality on the category
of admissible smooth representations of $\GL_n$ (\cites{MR1471867, MR3769724}).

\begin{remark}
More generally, we can fix a (not necessarily unitary) irreducible supercuspidal representation $\rho$ of $\GL_d(F)$ and consider the irreducible representations
\[
Z([a,b]_{\rho})=\soc(\rho\abs{\det}^a\times\dots\times\rho\abs{\det}^b),\ \
L([a,b]_{\rho})=\soc(\rho\abs{\det}^b\times\dots\times\rho\abs{\det}^a)
\]
of $\GL_{(b-a+1)d}(F)$. (When $\rho$ is the trivial character of $\GL_1(F)=F^*$
this coincides with the previous notation.)
We can then define $Z(\m_{\rho})$ and $L(\m_{\rho})$ for any multisegment $\m$ as before. Theorem \ref{thm: main} below and its proof will hold without change.
Given irreducible supercuspidal representations $\rho_1,\dots,\rho_k$
of $\GL_{d_i}(F)$ such that $\rho_i\not\simeq\rho_j\abs{\det}^r$ for all
$i\ne j$ and $r\in\Z$, and any multisegments $\m_1,\dots,\m_k$,
the representation $Z((\m_1)_{\rho_1})\times\dots\times Z((\m_k)_{\rho_k})$
is irreducible. Moreover, by Zelevinsky classification, any irreducible representation of $\GL_n(F)$ can be written uniquely in this form (up to permuting the factors) \cite{MR584084}.
A similar statement holds for $L(\m)$. Therefore, for all practical purposes it is enough to deal with a single $\rho$.
For concreteness we take $\rho$ to be the trivial character of $F^*$, but as was pointed out above this is essentially immaterial.
\end{remark}

\subsection{}
Suppose that $\la$ is a ladder.
Then, for any irreducible representation $\tau$ of $\GL_n(F)$, each of the representations $\soc(Z(\la)\times\tau)$
and $\soc(\tau\times Z(\la))$ is irreducible and occurs
with multiplicity one in the Jordan--H\"older sequence of $Z(\la)\times\tau$ \cite{MR3573961}.
Thus, for any $\m\in \Mult$ and $\la\in \Lad$, there is a multisegment $\socm(\m,\la)\in \Mult$, such that
\[
\soc(Z(\m)\times Z(\la)) \cong Z(\socm(\m,\la)).
\]
A simple recursive algorithm for the computation of $\socm(\m,\la)$,
which relies on the M\oe glin--Waldspurger algorithm, is given in [ibid.].\footnote{Strictly speaking,
we have to pass to the contragredient.}
We recall the result (using the notation of \S\ref{sec: MW}).

\begin{proposition} (\cite{MR3573961}*{Proposition 6.15} and \cite{MR3866895}*{Lemma 3.16}) \label{prop: lm}
Let $0\ne\m\in\Mult$ and $\la\in \Lad$.
\begin{enumerate}
\item Suppose that $\min\la\le\min \m$.
Let $\Delta$ be the unique segment in $\la$ for which $b(\Delta) = \min \la$.
Then,
\[
\socm(\m,\la) = \socm( \m -\n , \la - \Delta ) + \n + \Delta\;
\]
where upon writing $\m = \sum_{i\in I} \Delta_i$,
\[
\n = \sum_{i\in I:b(\Delta_i) = b(\Delta)\text{ and }e(\Delta_i)\le e(\Delta)} \Delta_i\quad \leq \m \;.
\]
\item
Suppose that $\min \m < \min\la$. Then, $\socm(\m,\la)$ is characterized by the condition
\[
\MW(\socm(\m,\la)) = \big(\socm(\m^{\MWn}, \la) , \Delta^{\nMW}(\m)\big)\;.
\]
\end{enumerate}
\end{proposition}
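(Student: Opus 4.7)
The plan is to establish both cases by combining the unique-socle property for ladder inductions (the key feature making $\socm$ well defined) with a careful analysis of parabolic induction through Jacquet modules and Frobenius reciprocity. The M\oe glin--Waldspurger algorithm enters as the bridge between socle formation and the combinatorics of multisegments, especially in Case 2.

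For Case 1, where $\min\la\le\min\m$, the crucial observation is that $\n+\Delta$ consists of segments that all begin at $b(\Delta)=\min\la$, and are therefore pairwise unlinked. Hence $Z(\n+\Delta)$ is irreducible and isomorphic to the parabolic product of the $Z(\Delta_i)$'s in any order. My first step is to establish an embedding
\[
Z(\m)\times Z(\la)\hookrightarrow Z(\m-\n)\times Z(\n+\Delta)\times Z(\la-\Delta),
\]
using the subrepresentation structure $Z(\la)\hookrightarrow Z(\Delta)\times Z(\la-\Delta)$ (a standard property of ladders) together with the free passage of $Z(\n)$ past $Z(\Delta)$ from the unlinkedness above. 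Next I would show that the right-hand factor $Z(\n+\Delta)$ can be transferred across the socle: because $Z(\n+\Delta)$ is unlinked in the relevant sense with the segments of $\socm(\m-\n,\la-\Delta)$ that survive, the product $Z(\socm(\m-\n,\la-\Delta))\times Z(\n+\Delta)$ is irreducible, and the unique-socle property forces it to be the socle. Careful verification of these unlinkedness / irreducibility conditions, using the specific form of $\n$, yields the claimed additive decomposition of $\socm(\m,\la)$.

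For Case 2, where $\min\m<\min\la$, my plan is to apply $\MW$ to $\socm(\m,\la)$ and match the output term-by-term against the claim. By the injectivity of $\MW$ recorded in \S\ref{sec: MW}, this uniquely determines $\socm(\m,\la)$. To compute $\Delta^{\nMW}(\socm(\m,\la))$ and $\socm(\m,\la)^{\MWn}$ representation-theoretically, I would use the compatibility of Jacquet functors with parabolic induction (the geometric lemma) to analyse the piece of the appropriate Jacquet module of $Z(\m)\times Z(\la)$ corresponding to the extraction of the chain of segments starting at $\min\m$. Since $\min\la>\min\m$, no segment of $\la$ can contribute to this extraction, so the chain must be drawn entirely from $\m$; this produces the leading segment $\Delta^{\nMW}(\m)$ and reduces the residual computation to $\socm(\m^{\MWn},\la)$. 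Alternatively, one may pass through the Zelevinsky involution $Z(\cdot)\leftrightarrow L(\cdot)$ (recalled before the proposition), under which the role of $\MW$ becomes transparent; the characterization then follows by translating the known socle description for the $L$-side and applying duality.

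The main obstacle I foresee is in Case 1: while the embedding and the irreducibility of $Z(\n+\Delta)$ are combinatorially clear, the actual separation of this factor from the socle computation is the nontrivial step, since the socle of a triple product need not respect such factorisations in general. Overcoming this requires the specific rigidity supplied by ladders -- in particular, the determinantal properties of \cite{MR3163355} which underlie the uniqueness statement of \cite{MR3573961} -- and most of the technical work lies precisely there. In Case 2 the main delicate point is ensuring that the chain of leading indices produced by the Jacquet-module (or duality) analysis coincides with the chain produced by the MW algorithm applied to $\m$ alone, but this is a direct combinatorial check once the geometric lemma has been set up.
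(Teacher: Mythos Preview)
The paper does not prove this proposition: it is quoted verbatim from \cite{MR3573961}*{Proposition~6.15} and \cite{MR3866895}*{Lemma~3.16}, as the attribution in the statement indicates, and the surrounding text explicitly says ``We recall the result''. There is therefore no proof in the paper to compare your proposal against.

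For what it is worth, your sketch does point at the ingredients actually used in those references --- the socle-irreducibility of ladder products, unlinkedness/irreducibility criteria, and the analysis of Jacquet modules (derivatives) underlying the M\oe glin--Waldspurger step --- but the arguments there are substantially more detailed than your outline, and in particular the reduction in Case~2 is carried out through a precise computation with Bernstein--Zelevinsky derivatives rather than a general appeal to duality. If you want to verify the statement you should consult the cited papers directly.
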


We remark that we also have
\[
\soc(L(\la)\times L(\m)) \cong L(\socm(\m,\la)).
\]
This can be proved by either repeating the argument of \cite{MR3573961} or
using the functorial properties of the Zelevinsky involution.

\subsection{Main result}
We use the notation of \S\ref{sec: Vien}.

\begin{theorem} ~ \label{thm: main}
\begin{enumerate}
\item For any $0\neq \m\in\Mult$ we have
\[
\socm(\drv\m,\ldrr{\m})=\m,\ \ \soc(L(\ldrr{\m})\times L(\drv\m))=L(\m).
\]
\item For any $(\la,\m)\in\pairs$ we have
\[
\socm(\m,\la)=\Vien^{-1}(\la,\m),\ \ \soc(L(\la)\times L(\m))=L(\Vien^{-1}(\la,\m)).
\]
\item Given $0\neq\m\in \Mult$ write $\RSK(\m)=(\la_1,\dots,\la_k)\in\Lads$
and define recursively
\[
\pi_k=Z(\la_k), \quad \pi_i=\soc(\pi_{i+1}\times Z(\la_i)),\quad i=k-1,\dots,1\;.
\]
Then, $\pi_1 \cong Z(\m)$.

Similarly, letting
\[
\pi'_k=L(\la_k),\quad \pi'_i=\soc(L(\la_i)\times\pi'_{i+1}),\quad i=k-1,\dots,1\;,
\]
we have $\pi'_1\cong L(\m)$.

In particular, $Z(\m)$ (resp. $L(\m)$) occurs as a sub-representation of
\begin{equation} \label{def: std}
\std(\m):= Z(\la_k)\times\cdots\times Z(\la_1)\quad \big( \mbox{resp. }\std'(\m):= L(\la_1)\times\cdots\times L(\la_k)\big)\;.
\end{equation}
\end{enumerate}
\end{theorem}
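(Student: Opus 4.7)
The plan is to deduce Parts (2) and (3) from Part (1), and then prove Part (1) by induction on $|\m|$ that mirrors the recursive structure of $\Vien$, using Proposition \ref{prop: lm} as the main engine. For Part (2), bijectivity of $\Vien$ supplies a unique $\n$ with $\Vien(\n)=(\la,\m)$, so Part (1) applied to $\n$ reads $\socm(\m,\la)=\socm(\drv\n,\ldrr{\n})=\n$. For Part (3), the recursion $\RSK(\m)=(\ldrr{\m},\RSK(\drv\m))$ combined with induction on $|\m|$ gives $\pi_2=Z(\drv\m)$, whence $\pi_1=\soc(Z(\drv\m)\times Z(\ldrr{\m}))=Z(\socm(\drv\m,\ldrr{\m}))=Z(\m)$ by Part (1). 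The $L$-statements follow in parallel from the companion formula $\soc(L(\la)\times L(\m))=L(\socm(\m,\la))$ remarked upon after Proposition \ref{prop: lm}.

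For Part (1) I induct on $|\m|$, with the base case $\m\in\Lad$ (so $\drv\m=0$) handled by iterated application of Proposition \ref{prop: lm}(1). The inductive step splits on whether $\min\m=\min\ldrr{\m}$ or $\min\m<\min\ldrr{\m}$.

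In the first case, $\min\ldrr{\m}=\min\m\le\min\drv\m$, so Proposition \ref{prop: lm}(1) applies with the distinguished segment $\Delta$ of $\ldrr{\m}$ coinciding (as an element of $\Seg$) with the largest depth-$d$ segment $\Delta_{i_0}$ of Lemma \ref{lem: sc}. Tracking the cyclic rearrangement by $\sigma$ on the depth-$d$ fiber identifies the multisegment $\n$ produced by the proposition as $\sum_{i\in\depth_{\m}^{-1}(d)\setminus\{i_0\}}\Delta_i$, and Lemma \ref{lem: sc}(4) then yields $\Vien\bigl(\m-\sum_{i\in\depth^{-1}(d)}\Delta_i\bigr)=(\ldrr{\m}-\Delta,\drv\m-\n)$. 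Applying the inductive hypothesis to this strictly smaller multisegment gives $\socm(\drv\m-\n,\ldrr{\m}-\Delta)=\m-\sum_{i\in\depth^{-1}(d)}\Delta_i$, and adding back $\n+\Delta$ recovers $\m$.

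In the second case, a short argument using $\min\ldrr{\m}=\min_k b(\Delta_{j_k})=\min_k\max_{j\in\depth^{-1}(k)}b(\Delta_j)>\min\m$ shows that any $j\in I$ with $b(\Delta_j)=\min\m$ necessarily lies in $I'$ rather than $\ldr{I}$, so $\min\drv\m\le\min\m<\min\ldrr{\m}$ and Proposition \ref{prop: lm}(2) is applicable. Non-degeneracy of $\m$ (Proposition \ref{lem: main2}(1)) legitimises invoking Corollary \ref{cor: main}, which delivers the three identities $\ldrr{\m^{\MWn}}=\ldrr{\m}$, $(\m^{\MWn})'=(\drv\m)^{\MWn}$ and $\Delta^{\nMW}(\drv\m)=\Delta^{\nMW}(\m)$. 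The inductive hypothesis applied to $\m^{\MWn}$, whose size has strictly decreased, gives $\socm((\drv\m)^{\MWn},\ldrr{\m})=\m^{\MWn}$, and this combined with Proposition \ref{prop: lm}(2) forces $\MW(\socm(\drv\m,\ldrr{\m}))=(\m^{\MWn},\Delta^{\nMW}(\m))=\MW(\m)$. Injectivity of $\MW$ closes the case. The main obstacle is matching the bookkeeping of Lemma \ref{lem: sc}(4) with that of Proposition \ref{prop: lm}(1) in the first case; the second case, though conceptually the deeper content, falls out cleanly once Corollary \ref{cor: main} is in hand.
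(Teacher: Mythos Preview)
Your proposal is correct and follows essentially the same route as the paper: reduce Parts (2) and (3) to Part (1), then prove Part (1) by induction on $|\m|$, splitting into the cases $\min\m=\min\ldrr{\m}$ (handled via Lemma \ref{lem: sc} and Proposition \ref{prop: lm}(1)) and $\min\m<\min\ldrr{\m}$ (handled via Corollary \ref{cor: main}, Proposition \ref{prop: lm}(2), and injectivity of $\MW$). Your write-up is in fact slightly more explicit than the paper's in verifying that $\min\drv\m<\min\ldrr{\m}$ before invoking Proposition \ref{prop: lm}(2); the only cosmetic slip is calling the base case ``iterated application of Proposition \ref{prop: lm}(1)'' when $\drv\m=0$ --- there the identity $\socm(0,\la)=\la$ is simply trivial.
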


\begin{proof}
First note that the second part is merely a reformulation of the first part and the last part follows
directly from the first two parts.
Thus, it suffices to prove the first part.
Moreover, by the remark following Proposition \ref{prop: lm}, it is enough to prove the assertion
\[
\socm(\drv\m,\ldrr{\m})=\m.
\]
We argue by induction on $|\m|$, using Proposition \ref{prop: lm}.
The base of the induction is trivial.
Suppose that $0\neq \m = \sum_{i\in I} \Delta_i\in\Mult$. We separate into cases.

Suppose first that $\min\ldrr{\m}=\min\m$.
Let $i_0\in I$ be as in Lemma \ref{lem: sc} and set
\[
\n= \sum_{i\in I:\depth_{\m}(i)=d(\m)}\Delta_i \quad\leq \m\;.
\]
Then, by Lemma \ref{lem: sc},
\[
\Vien(\m-\n) = \big(\la(\m)-\Delta_{i_0}, \    \m' - (\n-\Delta_{i_0})\big)\;.
\]
Since $|\m-\n|<|\m|$, the induction hypothesis now implies that
\[
\m-\n = \socm \big( \m' - (\n-\Delta_{i_0}), \la(\m)-\Delta_{i_0}\big) \;.
\]
It follows from the first part of Proposition \ref{prop: lm} and Lemma \ref{lem: sc}\eqref{it: seco} that $\socm(\m', \la(\m)) = \m$.

Suppose now that $\min\m<\min\ldrr{\m}$. By the second part of Proposition \ref{prop: lm} and Corollary \ref{cor: main}, we have
\begin{align*}
\MW(\socm(\m',\la(\m))) & =  \Big(\socm\big((\m')^{\MWn}, \la(\m)\big) , \Delta^{\nMW}(\m')\Big) \\
&= \Big(\socm\big((\m^{\MWn})', \la(\m^{\MWn})\big) , \Delta^{\nMW}(\m)\Big)\;.
\end{align*}
Yet, since $|\m^{\MWn}|<|\m|$, the induction hypothesis implies that the last expression is nothing but $\MW(\m)$.
The result follows from the injectivity of the map $\MW$.
\end{proof}

\begin{remark}
In \cite{10.1093/imrn/rnz006}, the \textit{width} invariant $k=k(\m)$ was defined for every $\m = \sum_{i\in I} \Delta_i\in \Mult$
to be the maximal number of distinct indices $i_1,\dots,i_k\in I$ for which
$\Delta_{i_{r+1}}\subset\Delta_{i_r}$ for all $r=1,\dots,k-1$.
By standard properties of the RSK correspondence, $k(\m)$ is the number of rows in the tableaux of $\RSKn(\m)$.

It was shown in \cite{10.1093/imrn/rnz006} that if there exist $\la_1,\ldots,\la_l\in \Lad$ such that
$Z(\m)$ appears as a subquotient of $Z(\la_1)\times\dots\times Z(\la_l)$, then $l \geq k(\m)$.
This underlines a minimality property of $\std(\m)$.
\end{remark}

\begin{remark} \label{rem: contra}
For $\m=\sum_{i\in I}[a_i,b_i]$ let $\m^\vee=\sum_{i\in I}[-b_i,-a_i]$
so that $Z(\m^\vee)$ is the contragredient of $Z(\m)$.
Then, $\RSK(\m^\vee)$ is related to $\RSK(\m)$ by the Schutzenberger involution
(modified to our conventions).

We also point out that
\[
(\m^\#)^\vee=(\m^\vee)^\#
\]
since $Z(\m)^\vee=Z(\m^\vee)$ and $L(\m^\vee)=L(\m)^\vee$.
\end{remark}

It would be interesting to extend the second part of Theorem \ref{thm: main} to an arbitrary
pair of a ladder $\la$ and a multisegment $\m$.

\section{Enhanced multisegments}
The results of the previous section stay put if we slightly extend the notion of multisegments to include empty segments of the form
$[a+1,a]$, $a\in\Z$.


Since the proofs are essentially the same, we will only state the results in the modified case, indicating the points where the proofs have to be adjusted.

\subsection{Extended definitions}
For an integer $a$, we will write $\delta_a = (a+1,a)\in \Z\times \Z$ and consider it as a \textit{dummy segment}.
As with segments, we write $b(\delta_a) = a+1$ and $e(\delta_a) = a$. We write
\[
\widetilde{\Seg} = \Seg \;\cup\; \{\delta_a\,: a\in \Z\}
\]
for the set of \textit{enhanced segments}.

The relation $\smlr$ is defined on $\widetilde{\Seg}$ in the same fashion as it was defined on $\Seg$.
By our convention, for $\Delta_1,\Delta_2\in \widetilde{\Seg}$, we write $\Delta_1 \subseteq \Delta_2$, if $b(\Delta_2)\leq b(\Delta_1)$ and $e(\Delta_1)\leq e(\Delta_2)$.

Next, we embed the monoid $\N(\Z)$ into $\N(\Z\times\Z)$ by assigning to each $\hat{a} = a_1 + \ldots + a_k \in \N(\Z)$ the \textit{dummy multisegment}
$\partial_{\hat{a}} = \delta_{a_1} + \ldots + \delta_{a_k} \in \N(\Z\times\Z)$. We will often refer to dummy multisegments as elements of $\N(\Z)$ by implicitly referring to this embedding.
We write $\supp\partial_{\hat{a}}$ for the underlying subset $\{a_1,\dots,a_k\}$ of $\Z$ (without multiplicity).

Given a dummy multisegment $\partial$ and an integer $a\in\Z$, we let $n_\partial(a)$ denote the multiplicity of $\delta_a$ in $\dum$.

We will call $\widetilde{\Mult} = \N(\widetilde{\Seg})$ the set of \textit{enhanced multisegments}.
Again taking $\N(\Z)$ as the collection of dummy multisegments, we naturally obtain an identification
\[
\widetilde{\Mult} = \Mult \times \N(\Z)\;.
\]
Using this presentation we will denote an enhanced multisegment as $\m = (\underline{\m}, \partial(\m))\in \widetilde{\Mult}$, where $\underline{\m}$ is the underlying
(genuine) multisegment and $\partial(\m)$ is a dummy multisegment.

We may view $\widetilde{\Mult}$ as a sub-monoid of $\N(\Z\times\Z)$, by taking $\m \mapsto \underline{\m}+ \partial(\m)$.
In particular, $ \RSKn(\m)= (P_{\m},Q_{\m})\in \mathcal{T}$ is well-defined, for all $\m\in \widetilde{\Mult}$.

Since the relation $\smlr$ and the dominance condition remain well-defined for enhanced segments, we may also define $\widetilde{\Lad} \subset \widetilde{\Mult}$ and
$\widetilde{\Lads'} \subset \bigcup_m \widetilde{\Lad}^m$ analogously to $\Lad$ and $\Lads'$.

The results of \S\ref{subsection: implement} all remain valid for enhanced multisegments and, as a result, give an extension of the previously defined map $\Vien$ to
\[
\Vien: \widetilde{\Mult}\setminus\{0\} \to \widetilde{\Lad} \times \widetilde{\Mult},\quad \Vien(\m)=(\ldrr{\m},\drv\m)\;.
\]

Similarly, we define $\RSK: \widetilde{\Mult} \to \widetilde{\Lads'}$ recursively and finally obtain
\[
\RSKn(\m) = (P(\RSK(\m)),Q(\RSK(\m)))\;,
\]
for all $\m\in \widetilde{\Mult}$.

\subsection{Enhanced results}
For any enhanced multisegment $\m=\underline{\m}+ \partial(\m)$ we write
\[
Z(\m)=Z(\underline{\m}),\ \ L(\m)=L(\underline{\m}).
\]
Similarly, if $\la$ is an enhanced ladder and $\m$ is an enhanced multisegment we write
\[
\socm(\la,\m)=\socm(\underline{\la},\underline{\m}).
\]

In the newly defined terms, Theorem \ref{thm: main} can now be extended to the following statement.
\begin{theorem}\label{thm: dummy}
For any enhanced multisegment $\m\in \widetilde{\Mult}$ with non-zero $\underline{\m}$, we have
\[
\socm\left(\drv\m,\ldrr{\m}\right)=\underline{\m},\ \ \soc\left(L\left(\ldrr{\m}\right)\times L\left(\drv\m\right)\right)=L(\m).
\]
\end{theorem}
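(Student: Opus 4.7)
The plan is to imitate the proof of Theorem \ref{thm: main}, observing at the outset that the second displayed identity follows from the first via the functorial compatibility of $\socm$ with the Zelevinsky involution, just as in the genuine case. Thus everything reduces to establishing
\[
\socm\bigl(\drv\m,\ldrr\m\bigr)=\underline{\m},
\]
where in accordance with the conventions adopted above, $\socm$ is interpreted on the underlying genuine parts of its arguments.

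First I would verify that the combinatorial apparatus developed in Sections \ref{sec: Vien} and \ref{sec: comm} extends transparently to the enhanced setting. The partial order $\smlr$, the depth function, the map $\Vien$, and the M\oe glin--Waldspurger step $\MW$ all depend only on the endpoints $b(\cdot)$ and $e(\cdot)$ of the participating segments, so nothing in their definitions distinguishes dummy from genuine segments. Consequently Lemmas \ref{lem: ib}, \ref{lem: id}, \ref{lem: ic}, \ref{lem: sc}, \ref{lem: pre1}, Proposition \ref{lem: main2}, and Corollary \ref{cor: main} all transfer verbatim, once we extend $\MW$ in the obvious way to enhanced multisegments $\m$ with $\underline{\m}\ne 0$.

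The induction itself splits into the same two cases as in the proof of Theorem \ref{thm: main}. When $\min\ldrr\m=\min\m$, I apply the extended Lemma \ref{lem: sc} to locate $i_0$ and $\n\leq\m$, invoke the induction hypothesis on $\m-\n$, and conclude via Proposition \ref{prop: lm}(1) applied to the underlying genuine parts. When $\min\m<\min\ldrr\m$, I apply the extended Corollary \ref{cor: main} together with Proposition \ref{prop: lm}(2) to compute $\MW\bigl(\socm(\drv\m,\ldrr\m)\bigr)$, invoke the induction hypothesis on $\m^\MWn$, and use the injectivity of $\MW$ to conclude.

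The main obstacle I anticipate is ensuring that the induction is correctly framed despite dummies. Since every dummy segment contributes zero to $|\m|$, a naive induction on $|\m|$ may stall in situations where a dummy sits at the deepest level of $\m$ and plays the role of $\Delta_{i_0}$ in Lemma \ref{lem: sc}. In such cases one may strip off the dummies at that depth, which reduces the total number of segments while preserving $\underline{\m}$, $\underline{\ldrr\m}$, and $\underline{\drv\m}$; an induction on the pair (size of $\underline{\m}$, number of segments of $\m$), ordered lexicographically, then provides a valid descent. Verifying at each step that the underlying genuine parts of $\drv\m$ and $\ldrr\m$ are those required by Proposition \ref{prop: lm}, the proof tracks that of Theorem \ref{thm: main} step for step.
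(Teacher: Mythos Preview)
Your overall strategy is the paper's: reduce to the $\socm$ identity and rerun the induction of Theorem~\ref{thm: main} using enhanced analogues of Lemma~\ref{lem: sc} and Corollary~\ref{cor: main}. That is correct in outline, but two points you treat as routine are exactly where the paper has to do something non-obvious.

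First, the extension of $\MW$ is \emph{not} ``obvious.'' If you treat dummies on the same footing as genuine segments, a dummy $\delta_a$ with $b(\delta_a)=\min\m$ always has the smallest endpoint among segments with that $b$-value, so it would be chosen as the first leading index, and ${}^-\delta_a$ is not an enhanced segment. The paper's $\widetilde{\MW}$ avoids this by decreeing that leading indices are drawn only from $\underline{\m}$ (so $\Delta^{\nMW}(\m)=\Delta^{\nMW}(\underline{\m})$) and by retaining emptied genuine segments as new dummies rather than discarding them. With this convention Lemma~\ref{lem: pre1} does \emph{not} go through verbatim (part~\eqref{it: change} must be adjusted, as the paper notes), so your blanket claim that ``all transfer verbatim'' is too strong.

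Second, the paper's case split is on $\min\underline{\m}$ versus $\min\underline{\ldrr{\m}}$, using the \emph{essential} depth $\underline{d}(\m)=\max\{\depth_{\m}(i):\Delta_i\in\Seg\}$, not the full depth. This is the natural dichotomy because Proposition~\ref{prop: lm} sees only genuine parts. Your alternative---split on $\min\m$ versus $\min\ldrr{\m}$, and when the deepest $i_0$ is a dummy strip off $\depth_{\m}^{-1}(d)$ and descend on the number of segments---can be made to work (all segments at that level are then copies of the same dummy, and Lemma~\ref{lem: sc}(4) shows the genuine parts of $\ldrr{\m}$ and $\drv\m$ are unchanged by the stripping), but this verification is the content you are deferring, not a formality. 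The paper's choice of essential depth and underlying minima sidesteps the stripping entirely and keeps the two cases aligned with Proposition~\ref{prop: lm} directly.
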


The point is that in general, $\underline{\drv\m}\ne\drv{\underline\m}$ and $\ldrr{\underline{\m}}\ne\underline{\ldrr{\m}}$.
Hence, Theorem \ref{thm: dummy} is a genuine extension of  Theorem \ref{thm: main}.


For $\m\in \widetilde{\Mult}$ with a non-zero $\underline{\m}$, let us write $\RSK(\m)=(\la_1,\dots,\la_k)\in\widetilde{\Lads'}$.
We then form the representation
\[
\std(\m):= Z(\underline{\la_k}) \times \cdots \times Z(\underline{\la_1})\;.
\]
Viewed differently, we may start from a choice of a multisegment $0\neq \n \in \Mult$ and an auxiliary choice of a dummy multisegment $\partial\in\N(\Z)$.
Taking $(\n,\partial)\in \widetilde{\Mult}$, we define $\Lambda_\partial(\n):=\Lambda(\n,\partial)$.

\begin{corollary}\label{cor: dumm}
For any $0\neq \n\in \Mult$ and $\partial\in \N(\Z)$, the irreducible representation $Z(\n)$ occurs as a sub-representation of $\Lambda_\partial(\n)$.
\end{corollary}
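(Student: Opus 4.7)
The plan is to derive the corollary by iterating Theorem \ref{thm: dummy}, in close analogy with the way the third part of Theorem \ref{thm: main} is obtained from the first part in the non-enhanced setting. Set $\m = (\n,\partial) \in \widetilde{\Mult}$ and write $\RSK(\m) = (\la_1,\dots,\la_k) \in \widetilde{\Lads'}$, and introduce the auxiliary sequence $\m_1 := \m$ and $\m_{i+1} := \m_i'$ for $1 \le i < k$, so that $\la_i = \ldrr{\m_i}$ and $\RSK(\m_i) = (\la_i,\dots,\la_k)$. Associated to this data form the irreducible representations
\[
\pi_k := Z(\underline{\la_k}), \qquad \pi_i := \soc(\pi_{i+1} \times Z(\underline{\la_i})), \quad 1 \le i < k,
\]
each well-defined and irreducible by the ladder socle theorem of \cite{MR3573961} applied to the ladder $\underline{\la_i}$.

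I would then prove by descending induction on $i$ the identity $\pi_i \cong Z(\underline{\m_i})$, adopting the convention that $Z(0)$ denotes the trivial representation of the trivial group. The base case $i=k$ follows from $\m_k = \la_k$. For the inductive step, when $\underline{\m_i} \ne 0$, applying Theorem \ref{thm: dummy} to $\m_i$ yields $\socm(\m_{i+1}, \la_i) = \underline{\m_i}$, i.e.\ $\soc(Z(\underline{\m_{i+1}}) \times Z(\underline{\la_i})) \cong Z(\underline{\m_i})$, which combined with the inductive hypothesis $\pi_{i+1} \cong Z(\underline{\m_{i+1}})$ gives $\pi_i \cong Z(\underline{\m_i})$. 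When $\underline{\m_i} = 0$, the enhanced multisegment $\m_i$ is entirely dummy, so the same is true of all the subsequent $\la_i,\dots,\la_k$, and each $\pi_j$ for $j \ge i$ is the trivial representation matching $Z(\underline{\m_j}) = Z(0)$, so the induction proceeds tautologically. Specializing to $i=1$ gives $\pi_1 \cong Z(\underline{\m}) = Z(\n)$.

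To conclude, I would chain the defining socle inclusions $\pi_i \hookrightarrow \pi_{i+1} \times Z(\underline{\la_i})$ into a single embedding, invoking the exactness of normalized parabolic induction to propagate $\pi_{i+1} \hookrightarrow \pi_{i+2} \times Z(\underline{\la_{i+1}})$ to $\pi_{i+1} \times Z(\underline{\la_i}) \hookrightarrow \pi_{i+2} \times Z(\underline{\la_{i+1}}) \times Z(\underline{\la_i})$. Iteration produces
\[
Z(\n) \;=\; \pi_1 \;\hookrightarrow\; Z(\underline{\la_k}) \times \cdots \times Z(\underline{\la_1}) \;=\; \Lambda_\partial(\n),
\]
as claimed. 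Granted Theorem \ref{thm: dummy}, the deduction is essentially formal and presents no substantive obstacle; the only mild care needed lies in the bookkeeping of the degenerate configurations in which some $\m_i$ or $\la_i$ consists purely of dummy segments, but these are dispatched trivially.
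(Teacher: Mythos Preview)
Your proposal is correct and follows essentially the same approach the paper intends: the corollary is deduced from Theorem \ref{thm: dummy} by iterating it along the recursive definition of $\RSK$, exactly as the third part of Theorem \ref{thm: main} was deduced from its first part in the non-enhanced setting. The paper does not spell this out, simply noting that the case $\partial=0$ amounts to Theorem \ref{thm: main}; your careful handling of the degenerate configurations where some $\underline{\la_i}$ or $\underline{\m_i}$ vanishes is a welcome elaboration but introduces nothing new.
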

The case $\partial=0$ amounts to Theorem \ref{thm: main}.

\subsubsection{On the proof of Theorem \ref{thm: dummy}}

Essentially the same arguments that appeared in the previous sections for multisegments, continue to hold for the case of enhanced multisegments.
In this manner, the proof of Theorem \ref{thm: dummy} follows the same lines as that of Theorem \ref{thm: main}. Let us give some details on the necessary adjustments.

Lemma \ref{lem: sc} holds for $\m\in \widetilde{\Mult}$, with the tweaked assumption $\min \underline{\ldrr{\m}} = \min \underline{\m}$,
and $d= \underline{d}(\m)$ being the \textit{essential} depth, that is,
\[
\underline{d}(\m) = \max_{i\in I}\{ \depth_{\m}(i)\,:\, \Delta_i\in \Seg\}\;.
\]
The commutativity of the two algorithms which is the culmination of \S\ref{sec: comm} requires us to extend $\MW$ to a map
\begin{equation} \label{eq: newMW}
\widetilde{\MW}:\widetilde{\Mult}\setminus\N(\Z) \to \widetilde{\Mult} \times \Seg\ \ \ \widetilde{\MW}(\m)=(\Delta^{\nMW}(\m),\m^{\MWn})
\end{equation}
as follows. Given $\m=\sum_{i\in I} \Delta_i \in \widetilde{\Mult}$ with $\underline{\m}\neq 0$,
we set $\Delta^{\nMW}(\m) = \Delta^{\nMW}(\underline{\m})$ and define $\m^{\MWn} = \sum_{i\in I} \Delta_i^*$ as in the multisegment case,
but \textit{without} discarding empty segments. In other words, if $\Delta_i^* = {}^-\Delta_i$ according to the original algorithm and
$\Delta_i = [a,a]$, we now set $\Delta_i^* = \delta_a \in \widetilde{\Seg}$ instead of discarding it.
Note that by our convention the leading exponents $I^*$ are taken from $\underline{\m}$ (not from the dummy part).
Iterating this procedure will give rise to $\underline{\m}^\#$ together with a dummy multisegment.

With this algorithm in mind, practically all arguments of \S\ref{sec: comm} remain valid, with the notable exception of Lemma \ref{lem: pre1}\eqref{it: change},
whose statement should be slightly adjusted. We note that our refined definition of $\MW$ allows us to drop the non-degeneracy assumptions in this section.
Finally, Corollary \ref{cor: main} holds for all $\m\in \widetilde{\Mult}$ with $\min \underline{\m} < \min \underline{\ldrr{\m}}$.

The arguments of \S\ref{sec: rep} can now be applied in the enhanced multisegment setting, with $\underline{\m}$ substituting $\m$ in the required places.
In the concluding part of the proof of Theorem \ref{thm: dummy} we use the observations that $(\underline{\drv{\m}})^{\MWn}$
$ = \underline{(\drv{\m})^{\MWn}} = $
$\underline{\drv{(\m^{\MWn})}}$.

\subsection{Relation to standard modules}
While our newly discovered family $\{\Lambda(\m)\}_{\m\in \Mult}$ of RSK-standard modules satisfies some intriguing properties (see next section for a further discussion),
Corollary \ref{cor: dumm} shows that each choice of a dummy multisegment $\partial \in \N(\Z)$, gives rise to yet another family $\{\Lambda_\partial(\m)\}_{\m\in \Mult}$ of representations.
Fixing $\m\in\Mult$ we will show that in a suitable sense, varying $\partial$ allows us to interpolate between the standard modules \`a la Zelevinsky and Langlands.

Recall that for a given a multisegment $\m=\sum_{i=1}^k\Delta_i\in\Mult$, Zelevinsky constructed the standard module
\[
\zeta(\m)= Z(\Delta_1)\times\dots\times Z(\Delta_k)\;,
\]
where $\Delta_i$'s are enumerated so that for any $i<j$, we have $\Delta_i\not\smlr\Delta_j$. Similarly,
\[
\lambda(\m)= L(\Delta_k)\times\dots\times L(\Delta_1)\;,
\]
will stand for the standard module in the sense of Langlands.\footnote{Note that we reversed the order of factors from the usual convention
in order to obtain $L(\m)$ as a subrepresentation, rather than a quotient.}

Recall further the involution $\m\mapsto \m^\#$ on $\Mult$ (see \S\ref{sec: MW}), for which $Z(\m) \cong L(\m^\#)$
and $L(\m)\cong Z(\m^\#)$.
In particular, for each $\m\in \Mult$, $Z(\m)=\soc(\zeta(\m))=\soc(\lambda(\m^\#))$.

Note that viewing a segment $\Delta=[a,b]\in \Seg$ as a singleton multisegment, we have $\Delta^\# = [a,a] + [a+1,a+1] + \ldots+ [b,b]\in \Lad$.

\begin{proposition}\label{prop: std}
Let $\m=\sum_{i\in I}\Delta_i\in \Mult$ be a multisegment consisting of $r$ segments, and let $\dum\in \N(\Z)$ be a dummy multisegment.
\begin{enumerate}
\item\label{it: std1} Suppose that the following two conditions are satisfied.
\begin{enumerate}
\item Whenever $e(\Delta_j)<e(\Delta_i)$ we have
\[
n_\dum(e(\Delta_i))\ge n_\dum(e(\Delta_j))+\#\{j':e(\Delta_j')=e(\Delta_j)\}.
\]
\item For any $i\in I$ and $t$ such that $t<e(\Delta_i)$ we have $n_\dum(t)\le n_\dum(e(\Delta_i))$.
\end{enumerate}
Then, $\Lambda_{\dum}(\m)\cong\zeta(\m)$.

\item\label{it: std2} Suppose that $n_\dum(i)\ge n_\dum(i+1)+r$, for all $\min\m\le i<\max\m$ (when $\m\ne0$).

Then, $\Lambda_{\dum}(\m)\cong\lambda(\m^\#)$.
\end{enumerate}
\end{proposition}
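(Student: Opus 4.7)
Both parts are proved by induction on $|\underline\m|$, tracking how $\RSK(\m,\dum)$ evolves step by step under the respective hypotheses. At each iteration I identify whether the genuine part $\underline{\ldrr{(\m,\dum)}}$ of the highest ladder fits a predicted pattern or is empty (contributing a trivial $Z(0)$ factor to $\Lambda_\dum(\m)$), and verify that the derived pair $(\m,\dum)'$ still satisfies the corresponding hypotheses.

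For part (1), the plan is to fix a segment $\Delta_{i_0}\in\m$ with $e(\Delta_{i_0})$ minimal. Conditions (a), (b) combine to make $\Delta_{i_0}$ attain the maximum depth among genuine segments in $(\m,\dum)$: condition (b) bounds the number of dummies at positions $t\le e(\Delta_{i_0})$ by $n_\dum(e(\Delta_{i_0}))$, while condition (a) provides the required surplus of dummies at every other endpoint. Analyzing the cyclic permutation $\sigma$ of \S\ref{subsection: implement} then shows that $\underline{\ldrr{(\m,\dum)}}$ is either exactly $\Delta_{i_0}$ or empty, depending on whether the dummies at the relevant depth are already exhausted. In either case I would verify that (a), (b) persist on $(\m,\dum)'$ (with $r$ reduced by one after a genuine extraction, or a dummy count decremented otherwise); iterating the induction expresses $\Lambda_\dum(\m)$ as a product of $Z(\Delta_j)$'s, with each newly extracted $\Delta_{i_0}$ placed as the rightmost surviving factor. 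Since any current $\Delta_{i_0}$ has minimal $e$, it satisfies $\Delta_{i_0}\not\smlr\Delta_j$ for every remaining $\Delta_j$, so this assembly is consistent with Zelevinsky's ordering convention, yielding $\Lambda_\dum(\m)\cong\zeta(\m)$.

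For part (2), the plan is to use Corollary \ref{cor: main} (in its enhanced form) to intertwine each RSK step with an M\oe glin--Waldspurger step on $\underline\m$. The heavy-dummy condition $n_\dum(i)\ge n_\dum(i+1)+r$ produces at $\min\underline\m$ a chain $\delta_{\min\m}\smlr\delta_{\min\m+1}\smlr\cdots$ whose length strictly exceeds the depth of any chain of genuine segments, so $\min\underline\m<\min\underline{\ldrr{(\m,\dum)}}$. The enhanced Corollary \ref{cor: main} then gives $\ldrr{(\m,\dum)}=\ldrr{((\m,\dum)^{\MWn})}$ and $((\m,\dum)^{\MWn})' = ((\m,\dum)')^{\MWn}$, with a common $\Delta^{\nMW}$. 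Iterating, I expect each segment $\Delta'=[a,b]$ of $\underline\m^\#$ to appear (after finitely many RSK iterations) as a highest ladder whose genuine part is the trivial ladder $(\Delta')^\#=[a,a]+\cdots+[b,b]$, while the intervening iterations produce only dummies. Each such genuine extraction contributes a factor $Z((\Delta')^\#)=L(\Delta')$, and assembling these factors in the RSK right-to-left order reconstructs $\lambda(\underline\m^\#)$.

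The main technical obstacle in both parts is the careful propagation of the dummy hypotheses through derivations. In part (1), (a), (b) must be re-verified for $(\m,\dum)'$, matching the decrement in dummy multiplicities against the segment removal or the all-dummy step. In part (2), the bound $n_\dum(i)\ge n_\dum(i+1)+r$ must persist across iterations (with $r$ evolving as $\underline{\m}$ transforms through derivation and $\MWn$), and the precise emergence of each trivial ladder $(\Delta')^\#$ inside a single RSK step must be confirmed. The enhanced analogs of Lemma \ref{lem: pre1} and Proposition \ref{lem: main2}, together with Corollary \ref{cor: main} from \S\ref{sec: comm}, provide the combinatorial toolbox for this bookkeeping.
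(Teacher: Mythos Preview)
For part~(1) your plan is essentially the paper's: one shows by an elementary induction that each step of $\Vien$ on $(\m,\dum)$ either peels off a single genuine segment (together with some dummies) or only dummies, and that conditions (a),(b) persist. Two points where your outline is imprecise compared with the paper: the segment that actually appears in $\underline{\ldrr{(\m,\dum)}}$ is not an arbitrary $\Delta_{i_0}$ with minimal $e$, but the one with minimal $e$ \emph{and maximal $b$} among those; and the underlying reason the analysis is tractable is the stronger fact that, under (a),(b), the depth $\depth_{(\m,\dum)}(i)$ depends only on $e(\Delta_i)$ (for both genuine and dummy $\Delta_i$). You should state and use this.

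For part~(2) your route diverges from the paper's and has a genuine gap. The paper does \emph{not} invoke Corollary~\ref{cor: main}. Instead it passes to the contragredient and uses the \emph{dual} M\oe glin--Waldspurger step $\m\mapsto(\m_{\MWn},\Delta_{\nMW}(\m))$ (based on $\max\underline\m$ rather than $\min\underline\m$), proving directly via an elementary lemma that under the hypothesis
\[
\underline{\ldrr{(\m,\dum)}}=\Delta_{\nMW}(\m)^\#\quad\text{and}\quad \drv{(\m,\dum)}=(\m,\dum)_{\MWn}-\text{(one layer of dummies)}.
\]
This immediately gives $Z(\underline{\ldrr{(\m,\dum)}})=L(\Delta_{\nMW}(\m))$, and induction assembles $\lambda(\m^\#)$.

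Your proposed mechanism via Corollary~\ref{cor: main} does not supply this. That corollary says only that when $\min\underline\m<\min\underline{\ldrr{(\m,\dum)}}$ the highest ladder is \emph{unchanged} by one $\MWn$ step; iterating this cannot by itself reveal what $\underline{\ldrr{(\m,\dum)}}$ is, and in fact the hypothesis of the corollary need not hold (e.g.\ if $\m$ is a single segment then $\Delta_{\nMW}(\m)^\#$ begins at $\min\underline\m$, so $\min\underline\m=\min\underline{\ldrr{(\m,\dum)}}$). Your expectation that ``each segment $\Delta'$ of $\underline\m^\#$ appears as a highest ladder with genuine part $(\Delta')^\#$'' is correct, but it is the \emph{conclusion} to be proved, and Corollary~\ref{cor: main} is not the tool that yields it; a direct depth computation (using that $\depth(i)$ is essentially $\max\underline\m-e(\Delta_i)$ under the heavy-dummy hypothesis) is what is needed.
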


The first part follows from the following elementary lemma by induction.

\begin{lemma}
Let $0\ne\m \in \widetilde{\Mult}$ be an enhanced multisegment, with $\underline{\m}=\sum_{i\in I}\Delta_i$.

\begin{enumerate}
\item Suppose that $n_{\dum(\m)}(e(\Delta_i))>0$, for all $i\in I$. Then,
\begin{enumerate}
\item $\depth_{\m}(i)$ depends only on $e(\Delta_i)$.
\item $\la(\m) = \sum_{a\in\supp\partial(\m)}\delta_a$.
\item $\m'=\m-\la(\m)$.
\end{enumerate}

\item Suppose that $\underline{\m}\ne0$ and let $j$ be the minimum of $e(\Delta_i)$, $i\in I$.
Let $\Delta$ be the segment in $\underline{\m}$ such that $e(\Delta)=j$ and $b(\Delta)$ is maximal with respect to this property.

Suppose that $n_{\dum(\m)}(j')=0$, for all $j'\le j$, but $n_{\dum(\m)}(e(\Delta_i))>0$, for all $i$ such that $e(\Delta_i)>j$. Then,
\begin{enumerate}
\item $\depth_{\m}(i)$ depends only on $e(\Delta_i)$.
\item $\la(\m) = \Delta+\sum_{a\in\supp\partial(\m)}\delta_a$.
\item $\underline{\m'}=\underline{\m}-\la(\m)$.
\end{enumerate}
\end{enumerate}
\qed
\end{lemma}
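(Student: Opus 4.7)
The plan is to compute the depth function $\depth_{\m}$ exactly under both hypotheses, and then read off the conclusions from the recipe of \S\ref{subsection: implement}. The whole argument will rest on the elementary observation that for any enhanced segments $\Gamma_1 \smlr \Gamma_2$ one has $e(\Gamma_1) < e(\Gamma_2)$, so endpoints along any $\smlr$-chain are strictly increasing. Setting $E' := \{e(\Gamma) : \Gamma \text{ a constituent of } \m\}$, this will immediately yield the upper bound $\depth_{\m}(i) \le \#\{c \in E' : c > e(\Gamma_i)\}$ for every index $i$ of $\m$.

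To prove (1a), I will use that the hypothesis $n_{\partial(\m)}(e(\Delta_i)) > 0$ for all $i \in I$ forces $E' = \supp \partial(\m)$ and ensures that a copy of $\delta_c$ occurs in $\m$ for each $c \in E'$. Enumerating $E' \cap (e(\Gamma_i), \infty)$ as $c_1 < \dots < c_m$, the chain $\Gamma_i \smlr \delta_{c_1} \smlr \dots \smlr \delta_{c_m}$ saturates the upper bound, giving $\depth_{\m}(i) = \#\{c \in E' : c > e(\Gamma_i)\}$, which depends only on $e(\Gamma_i)$. The same construction handles (2a) with $E' = \{j\} \cup \supp \partial(\m)$: the only segments lacking a matching dummy have $e = j$, but for them all $c_r > j$ and the chain remains valid.

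Having fixed the depth formula, depth fibers are in bijection with values in $E'$, and within the fiber of endpoint $e_k$ all constituents share the same endpoint, so the admissible enumeration orders them by $b$ ascending. When $e_k \in \supp \partial(\m)$, a dummy $\delta_{e_k}$ occurs and is strictly $\subsetneq$-smaller than any genuine segment with endpoint $e_k$, so $j_k$ may be chosen to be such a dummy; in the exceptional fiber $e_k = j$ of part (2) no dummies are present, and $j_k$ is forced to be the genuine segment of maximal $b$, namely $\Delta$. Plugging $e(\Delta_{i^k_1}) = e_k$ into the definition $\Delta'_{j_k} = [b(\Delta_{j_k}), e(\Delta_{i^k_1})]$ yields $\Delta'_{j_k} = \delta_{e_k}$ in the dummy case and $\Delta'_{j_d} = \Delta$ in the exceptional case; summing over $k$ produces (1b) and (2b). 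For any index $i$ of $\m$ lying outside $\ldr{I}$ the permutation $\sigma$ keeps $i$ inside its depth fiber, so $\Delta'_i = \Delta_i$; combined with $\Delta'_{j_k} = \Delta_{j_k}$ this gives $\drv{\m} + \ldrr{\m} = \m$, from which (1c) follows, and (2c) follows after passing to underlying genuine parts. The main obstacle I anticipate is the case analysis in the exceptional fiber of part (2), specifically verifying that $\Delta$ is forced to be the last element of its admissible enumeration, so that $\Delta$ itself rather than a dummy appears as a summand of $\ldrr{\m}$.
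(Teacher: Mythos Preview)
The paper does not give a proof of this lemma; it is stated with a \qed{} and described as ``elementary'', with the proof left to the reader. Your proposal is a correct and complete argument filling in those details: the explicit depth formula $\depth_{\m}(\Gamma)=\#\{c\in E':c>e(\Gamma)\}$, obtained by combining the endpoint upper bound with the saturating chain of dummies, is exactly the right computation, and the remaining conclusions about $\ldrr{\m}$ and $\drv{\m}$ follow transparently from the recipe of \S\ref{subsection: implement} once one observes that every depth fiber consists of constituents with a common endpoint (so $\Delta'_i=\Delta_i$ for \emph{all} indices $i$, including those in $\ldr{I}$). The ``main obstacle'' you anticipated in the exceptional fiber of part (2) is not an obstacle at all: by hypothesis there are no dummies with endpoint $\le j$, so the fiber with endpoint $j$ consists only of genuine segments and $\Delta$, having maximal $b$, is automatically last in any admissible enumeration.
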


For the second part of Proposition \ref{prop: std}, we recall the relation $(\m^\#)^\vee=(\m^\vee)^\#$ for multisegments (see Remark \ref{rem: contra}).
This means that instead of \eqref{eq: newMW} we may consider the map
\[
\m\mapsto(\m_{\MWn}, \Delta_{\nMW}(\m))\;,
\]
where $\m_{\MWn}: = ((\m^\vee)^{\MWn})^\vee$ and $\Delta_{\nMW}(\m): =(\Delta^{\nMW}(\m^\vee))^\vee$.
Applying it iteratively will give rise to $\underline{\m}^\#$, together with a dummy multisegment.
(In fact, for genuine multisegments, this was the original description in \cite{MR863522}.)

Hence, the second part of Proposition \ref{prop: std} follows by a simple induction from the following
elementary lemma whose proof is an easy exercise (cf. \cite{MR863522}*{Remarques II.2}).

\begin{lemma}
Let $\m \in \widetilde{\Mult}$ be an enhanced multisegment with $0\neq \underline{\m}=\sum_{i\in I}\Delta_i$.

Suppose that $n_{\dum(\m)}(j)=0$ for all $j\ge\max\underline{\m}$, but $n_{\dum(\m)}(j)>0$ for all $\min \underline{\m} \le j<\max\m$. Then,
\begin{enumerate}
\item For any $i\in I$, $\depth_{\m}(i)=\max\underline{\m}-e(\Delta_i)$, if there exist $i_0,\dots,i_r=i\in I$
with $r=\max\m-e(\Delta_i)$ such that $\Delta_i=\Delta_{i_r}\smlr\dots\smlr\Delta_{i_0}$;
otherwise $\depth_{\m}(i)=\max\m-e(\Delta_i)-1$.
\item $\la(\m) = \Delta_{\nMW}(\m)^\#+\sum_{a:n_{\dum(\m)}(a)>0,a+1<b(\Delta_{\nMW}(\m))}\delta_a$.
\item $\drv{\m}=\m_{\MWn}-\sum_{a:n_{\dum(\m)}(a)>0}\delta_a$.
\end{enumerate}
\qed
\end{lemma}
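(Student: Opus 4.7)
The lemma is read off from the map $\Vien$ using the depth information of part~(1), which the hypothesis on $\dum(\m)$ makes especially rigid.

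\emph{Part (1).} Along any chain $X_0\smlr X_1\smlr\cdots\smlr X_r$ in $\m$ the values $e(X_j)$ strictly increase. Under the hypothesis, actual segments have $e\le\max\underline{\m}$ and dummies of $\m$ have $e\le\max\underline{\m}-1$, giving $\depth_{\m}(\Delta_i)\le\max\underline{\m}-e(\Delta_i)$. The chain $\Delta_i\smlr\delta_{e(\Delta_i)+1}\smlr\cdots\smlr\delta_{\max\underline{\m}-1}$, available by the hypothesis on $\dum(\m)$, realizes $\depth_{\m}(\Delta_i)\ge\max\underline{\m}-e(\Delta_i)-1$. To decide when the upper bound is reached, observe that every $\smlr$-step raises $e$ by at least $1$ and any dummy-to-actual step by at least $2$; a chain of length $\max\underline{\m}-e(\Delta_i)+1$ must therefore terminate at an actual segment with $e=\max\underline{\m}$ and have all its steps increment $e$ by exactly~$1$, which rules out any dummy. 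Thus the saturating chain lies entirely within $\underline{\m}$, and the dichotomy of part~(1) follows.

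\emph{Part (2).} By \eqref{def: Vien}, $\Delta'_{j_k}=[\max_{j}b(\Delta_j),\max_{j}e(\Delta_j)]$, with the maxima over indices of depth $k$. Let $K$ denote the length of the M\oe glin--Waldspurger dual chain $\Delta^{(0)},\dots,\Delta^{(K-1)}$ of actual segments produced by running MW on the side of $\underline{\m}^\vee$, so that $\Delta_{\nMW}(\m)=[B^*,\max\underline{\m}]$ with $B^*=\max\underline{\m}-K+1$. The central claim is that the equality case of part~(1) holds at depth $k$ if and only if $k\le K-1$. The reverse direction is immediate because the MW chain is itself an all-actual chain of length $k+1$. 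For the forward direction I would prove, by induction on $j$, that any chain of actual segments $\Delta_{i_0}\smlr\cdots\smlr\Delta_{i_j}$ with $e(\Delta_{i_s})=\max\underline{\m}-s$ is dominated by the greedy MW chain in the sense that $b(\Delta^{(s)})\ge b(\Delta_{i_s})$ for $s\le j$, whence the MW chain extends at least as far as any other such chain. Granting the equivalence, at each depth $k\le K-1$ the dummy $\delta_{\max\underline{\m}-k-1}$ supplies the maximal $b=\max\underline{\m}-k$ and the MW chain supplies the maximal $e=\max\underline{\m}-k$, so $\Delta'_{j_k}=[\max\underline{\m}-k,\max\underline{\m}-k]$; for $k\ge K$ the level is non-empty exactly when $\delta_a\in\dum(\m)$ with $a=\max\underline{\m}-k-1$ (equivalently $a+1<B^*$), in which case $\Delta'_{j_k}=\delta_a$. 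Summing over $k$ reproduces the formula of part~(2).

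\emph{Part (3).} With $\sigma$ known on each depth fibre up to its cyclic action, the contributions $\Delta'_i$ for $i\in I'$ can be read off level by level. For $0\le k\le K-1$, the dual-chain element $\Delta^{(k)}$ sits at depth $k$ as a non-leading index and contributes $[b(\Delta^{(k)}),\max\underline{\m}-k-1]$ (the shortening of $\Delta^{(k)}$ that also appears in $\m_{\MWn}$); the remaining actual segments and the duplicate dummies pass through as themselves. The total assembles to $\m_{\MWn}$ with one copy of each distinct dummy of $\dum(\m)$ removed, which is the asserted formula. The main obstacle throughout is the domination induction inside part~(2), which packages the greediness of the M\oe glin--Waldspurger algorithm; once it is in place, parts~(2) and~(3) reduce to matching the output of $\Vien$ against the stated closed forms.
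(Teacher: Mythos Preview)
The paper does not actually prove this lemma; it is declared ``an easy exercise (cf.\ \cite{MR863522}*{Remarques II.2})'' and closed with a \qed. So there is no argument in the paper to compare against, and your outline is a reasonable execution of that exercise. The two main mechanisms you isolate are exactly right: the dichotomy of Part~(1) follows because any $\smlr$-chain raises $e$ strictly and a dummy-to-actual step raises it by at least~$2$, so a chain of full length from $\Delta_i$ must end at an actual segment with $e=\max\underline{\m}$ and can contain no dummy; and the ``domination induction'' in Part~(2), showing that the greedy M\oe glin--Waldspurger chain $\Delta^{(0)},\dots,\Delta^{(K-1)}$ has $b(\Delta^{(s)})\ge b(\Delta_{i_s})$ for any competing actual chain, is precisely what identifies the equality case with $k\le K-1$.

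A few points you should tighten. In Part~(2), you invoke the dummy $\delta_{\max\underline{\m}-k-1}$ to witness the maximal $b$ at depth~$k$, but the hypothesis only guarantees this dummy lies in $\dum(\m)$ when $\max\underline{\m}-k-1\ge\min\underline{\m}$. The only way this can fail for $k\le K-1$ is the boundary case $k=K-1$ with $B^*=\min\underline{\m}$; there one notes that $\Delta^{(K-1)}$ is forced to equal $[\min\underline{\m},\min\underline{\m}]$, so it already supplies $b=\max\underline{\m}-k$. In Part~(3), the assertion that $\Delta^{(k)}$ is a \emph{non-leading} index (hence contributes to $\drv{\m}$ rather than to $\la(\m)$) needs the presence of something strictly smaller in the depth-$k$ fiber; again this is the dummy when it is present, and the same boundary case requires separate care. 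Finally, the hypothesis does not exclude dummies $\delta_a$ with $a<\min\underline{\m}$; you should check that these sit at depths $\ge K$ and feed one copy of each into $\la(\m)$ as the formula of Part~(2) predicts, and that they are correctly accounted for in Part~(3).
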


\begin{remark}
While Proposition \ref{prop: std} (which is a simple combinatorial statement) and Theorem \ref{cor: dumm} give a new perspective
on the M\oe glin--Waldpusrger involution, they do not give a genuinely
new proof of the relation $Z(\m)=L(\m^\#)$, which is one of the main results of \cite{MR863522}.
The reason is that the techniques and ideas of [ibid.] are used in the results of \cite{MR3573961} and hence indirectly
in the proof of Theorem \ref{cor: dumm}.
\end{remark}

\section{Odds and ends} \label{sec: last}

\subsection{Socle irreducibility}
It is natural to ask whether as in the case of the traditional standard modules, the
newly constructed class of RSK-standard modules possesses the property of having a unique irreducible sub-representation.
Although we are currently unable to prove this in general, we expect that this is indeed the case, namely

\begin{conjecture} \label{conj: main}
Let $\m$ be a multisegment and let $\std(\m)$ be as in \eqref{def: std}. Then $\soc(\std(\m))$ is irreducible, hence (by Theorem \ref{thm: main}), $Z(\m)\cong \soc(\std(\m))$.
\end{conjecture}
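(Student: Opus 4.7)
The plan is to proceed by induction on the width $k = k(\m)$. The base case $k=1$ is trivial, and the case $k=2$ under regularity assumptions has been established in \cite{1711.01721}. For the inductive step, write $\std(\m) = \pi_k \times \tau$ with $\tau = \pi_{k-1} \times \cdots \times \pi_1$. By Theorem \ref{thm: main} we already have $Z(\m) \hookrightarrow \std(\m)$, so one must rule out any other irreducible subrepresentation of $\std(\m)$.

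The main tool is the key input from \cite{MR3573961}: for the ladder $\pi_k$ and any irreducible representation $\sigma$, the socle $\soc(\pi_k \times \sigma)$ is irreducible and occurs with Jordan--H\"older multiplicity one in $\pi_k \times \sigma$. Granted the inductive hypothesis for $\tau$, so that $\soc(\tau) \cong Z(\m')$ is irreducible for a suitable multisegment $\m'$, Theorem \ref{thm: main} identifies $\soc(\pi_k \times Z(\m')) \cong Z(\m)$, which thus embeds into $\soc(\std(\m))$. The conjecture would follow if every embedding $Z(\n) \hookrightarrow \pi_k \times \tau$ necessarily factored through $\pi_k \times Z(\m') \hookrightarrow \pi_k \times \tau$; via Frobenius reciprocity this is a statement about the appropriate Jacquet module of $Z(\n)$ and its interaction with the composition series of $\tau$.

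The main obstacle is twofold. First, the inductive hypothesis cannot be applied directly to $\tau$, since a priori $\tau$ need not be of the form $\std(\m')$ for any multisegment $\m'$ of smaller width (cf.\ Remark \ref{rem: imrsk}); one will have to strengthen the conjecture to cover arbitrary tuples in $\Lads'$, or reformulate it so that the induction closes. Second, even granting irreducibility of $\soc(\tau)$, one must exclude contributions from composition factors of $\tau$ strictly above its socle. A promising route is to attack the stronger triangularity conjecture recorded in the introduction: every irreducible subquotient $Z(\n) \neq Z(\m)$ of $\std(\m)$ satisfies the strict inequality $(P_\n, Q_\n) \prec (P_\m, Q_\m)$ in the product of dominance orders on inverted tableaux; combined with multiplicity one of $Z(\m)$ in $\std(\m)$ — itself a natural first target, approachable through the explicit Jacquet module calculus for ladder representations from \cite{MR3163355} — this immediately forces $\soc(\std(\m)) = Z(\m)$. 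The combinatorial scaffolding of Corollary \ref{cor: main} and the crystal-theoretic viewpoint of the appendix seem to be the natural tools for triangularity, while an alternative avenue, hinted at in the last paragraph of the introduction, would be to transport the question via the Brundan--Kleshchev isomorphism to the quiver Hecke setting and exploit the \emph{realness} (square-irreducibility) of ladder representations to show directly that the functor $\pi_k \times -$ commutes with taking socles on the relevant subcategory.
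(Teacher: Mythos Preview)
The statement you are attempting to prove is presented in the paper as an \emph{open conjecture}: the paragraph immediately preceding it reads ``Although we are currently unable to prove this in general, we expect that this is indeed the case.'' There is therefore no proof in the paper to compare your proposal against.

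Your proposal is not a proof either, and to your credit you say so: you explicitly flag the two main obstacles (the inductive hypothesis does not close because $\pi_{k-1}\times\dots\times\pi_1$ is not in general of the form $\std(\m')$, cf.\ Remark \ref{rem: imrsk}; and even if $\soc(\tau)$ were irreducible one must still rule out contributions from higher layers of $\tau$). These are precisely the difficulties the paper itself isolates. In particular, the paper notes after Conjecture \ref{conj: od} that in general $\soc(\soc(\pi_1\times\pi_2)\times\pi_3)\not\simeq\soc(\pi_1\times\soc(\pi_2\times\pi_3))$ even for supercuspidal $\pi_i$, which blocks exactly the ``socle commutes with $\pi_k\times{-}$'' step you would need. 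The triangularity route you propose is the paper's Conjecture \ref{conj: triang}, itself open and only verified computationally for $n\le 8$; and the quiver-Hecke/realness avenue you mention is likewise left by the paper as a suggestion for future work rather than a known argument.

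In short: your write-up is a reasonable survey of possible attack lines, consistent with the paper's own discussion, but none of them is currently known to go through, and the paper does not claim otherwise.
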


A weaker form of this conjecture would be the following

\begin{conjecture} \label{conj: od}
Suppose that $\RSK(\m)=(\la_1,\dots,\la_k)$.
Define recursively $\pi'_1=Z(\la_1)$, $\pi'_i=\soc(Z(\la_i)\times \pi'_{i-1})$, $i=2,\dots,k$.
Then $Z(\m)\cong \pi'_k$.
\end{conjecture}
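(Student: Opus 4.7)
Plan.

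Each $\pi'_i$ is irreducible by iterated application of \cite{MR3573961}, and $\pi'_i$ embeds into $Z(\la_i)\times Z(\la_{i-1})\times\cdots\times Z(\la_1)$, so that $\pi'_k$ is an irreducible subrepresentation of the standard module $\std(\m)$. In particular, granting the stronger Conjecture~\ref{conj: main}, the present conjecture follows at once: $\pi'_k$ must coincide with $\soc(\std(\m))=Z(\m)$. For an unconditional proof one may proceed by induction on the width $k$, with the trivial base case $k=1$.

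For the inductive step, I would write $\pi'_{k-1}=Z(\n)$ and reduce to the combinatorial identity $\socm'(\la_k,\n)=\m$, where $\socm'(\la,\n)$ denotes the multisegment satisfying $\soc(Z(\la)\times Z(\n))\cong Z(\socm'(\la,\n))$. An algorithm for $\socm'$ analogous to that of Proposition~\ref{prop: lm} should be obtainable along similar lines. To identify $\n$, one would like to apply the inductive hypothesis to $\n=\Vein(\la_1,\Vein(\la_2,\dots,\Vein(\la_{k-2},\la_{k-1})\dots))$, which requires $(\la_1,\dots,\la_{k-1})$ to lie in the image of $\RSK$. This holds at least in the saturated case (Remark~\ref{rem: imrsk}); in general the enhanced-multisegment framework of Theorem~\ref{thm: dummy} may be needed, inserting dummy segments to force the tuple into the $\RSK$-image.

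The identity $\socm'(\la_k,\n)=\m$ would then follow from a ``bottom-up'' analog of Section~\ref{sec: comm}: an algorithm $\Vien^\flat$ peeling off the bottom ladder $\la_k$ instead of the top ladder $\la_1$, together with a counterpart of Corollary~\ref{cor: main} asserting commutativity of $\Vien^\flat$ with the left-socle operation $\socm'$. Mirroring the proof of Theorem~\ref{thm: main} within this bottom-up framework would close out the conjecture.

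The hard part will be developing the bottom-up combinatorics essentially from scratch, mirroring the delicate depth analysis of Proposition~\ref{lem: main2} but applied to the minimal-depth (rather than maximal-depth) segments of a multisegment. The appendix by Mark Shimozono, reinterpreting $\RSK$ via key tableaux and Demazure crystals, seems to provide the appropriate setting; in particular a Schutzenberger-type involution (in the spirit of Remark~\ref{rem: contra}) may ultimately allow the bottom-up picture to be exchanged with the top-down one already treated in the paper.
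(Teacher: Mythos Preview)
This statement is presented in the paper as a \emph{conjecture}; there is no proof to compare against. Your proposal is a strategy rather than a proof, and it coincides almost exactly with the strategy the authors themselves sketch in the paragraphs immediately following Conjecture~\ref{conj: od}.

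Specifically: your observation that the conjecture follows from Conjecture~\ref{conj: main} is correct and is precisely why the paper calls it ``a weaker form'' of that conjecture. Your bottom-up map $\Vien^\flat$ peeling off $\la_k$ is the paper's map $V(\m)=(\la_k,{}'\m)$ with ${}'\m=\RSK^{-1}(\la_1,\dots,\la_{k-1})$; your required commutativity analog of Corollary~\ref{cor: main} is exactly the paper's displayed equation~\eqref{eq: vmk}, namely $V(\m^{\MWn'})=(\la_k,({}'\m)^{\MWn'})$ for the end-point version of $\MW$; and the well-definedness issue you flag for $(\la_1,\dots,\la_{k-1})$ being in the image of $\RSK$ is the same one the paper flags, pointing (as you do) toward the appendix for a possible resolution. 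Neither you nor the authors carry the argument through: the bottom-up analog of Proposition~\ref{lem: main2} and Corollary~\ref{cor: main} remains the open combinatorial problem.
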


Note that Conjecture \ref{conj: od} is not a formal consequence of Theorem \ref{thm: main}
since in general it is not true that $\soc(\soc(\pi_1\times\pi_2)\times\pi_3)\simeq
\soc(\pi_1\times\soc(\pi_2\times\pi_3))$, even if $\pi_i$ are supercuspidal.
For instance, we can take $\pi_1=\pi_3$ to be the trivial character of $F^*$ and $\pi_2$ to be
the absolute value on $F^*$.

It is tempting to attempt to prove Conjecture \ref{conj: od} by the same method as Theorem \ref{thm: main}.
Suppose that $\RSK(\m)=(\la_1,\dots,\la_k)$.
Call $\la_k$ the lowest ladder of $\m$ and write $V(\m)=(\la_k,{}'\m)$
where ${}'\m=\RSK^{-1}(\la_1,\dots,\la_{k-1})$ (the fact that it is well defined may follow from arguments in the spirit of \S\ref{sec:appendix}).
We need to show that
\[
Z(\m)=\soc(Z(\la_k)\times Z({}'\m))\;.
\]
As before, it is natural to use the recipe of \cite{MR3573961}*{\S6.3}.
The simple case is when $\max\la_k=\max\m$.
Assume that $\max\la_k<\max\m$. In this case we have to show that
\begin{equation} \label{eq: vmk}
V(\m^{\MWn'})=(\la_k,({}'\m)^{\MWn'})
\end{equation}
where $\m^{\MWn'}$ is the analogue of $\m^{\MWn}$ for the end points.

\subsection{Triangular structure}\label{sect: tri}

For any integer $r$ and an inverted Young tableau $Y$, let $Y_{\ge r}$
be the part of $Y$ consisting of the entries that are bigger than or
equal to $r$. Clearly, $Y_{\ge r}$ is also an inverted Young tableau (of possibly smaller size). Recall the dominance order on the set of Young diagrams defined by
\[
(\lambda_1,\dots,\lambda_k)\dom (\lambda_1',\dots,\lambda'_{k'})\text{ if }
k\le k'\text{ and }\sum_{i=1}^j\lambda_i\ge\sum_{i=1}^j\lambda'_j\text{ for all }
j=1,\dots,k.
\]
(We will only compare Young diagrams of the same size. In this case $\dom$ encodes the
closure relation of unipotent orbits in $\GL_n(\C)$, parameterized by partitions via the Jordan normal form.)
Define a partial order on the set of inverted Young tableaux by
\[
Y\le Y'\quad \text{ if }\sh(Y_{\ge r})\dom\sh(Y'_{\ge r})\;\text{ for all }r\in\Z\;,
\]
where $\sh(X)$ is the shape of $X$, i.e., its underlying Young diagram.
(We will only compare inverted Young tableaux whose entries coincide
as multisets.)
The product partial order on $\Tabs$ induces a partial order
on $\Lads$ (which will be denoted by $\le$), according to the identifications of \S\ref{sect: lad}.

\begin{conjecture}\label{conj: triang}
Let $\m$ be a multisegment and let $\std(\m)$ be as in \eqref{def: std}. In the Grothendieck group we have
\[
\left[\std(\m)\right]=\left[Z(\m)\right]+\sum_{i=1}^l\left[ Z(\n_i)\right]\;,
\]
where $\RSK(\n_i)<\RSK(\m)$ for all $i=1,\dots,l$.\footnote{The $\n_i$'s are not necessarily distinct.
Also, not all $\n$'s with $\RSK(\n)<\RSK(\m)$ necessarily occur.}
\end{conjecture}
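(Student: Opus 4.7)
The plan is to prove Conjecture \ref{conj: triang} by induction on $\abs{\m}$, exploiting the recursion $\std(\m) = \std(\drv\m) \times Z(\ldrr{\m})$, which follows from $\RSK(\m) = (\ldrr{\m}, \RSK(\drv\m))$ upon unwinding the definition of $\std$. Since $\abs{\drv\m} < \abs{\m}$ whenever $\m \neq 0$, the inductive hypothesis supplies
\[
[\std(\drv\m)] = [Z(\drv\m)] + \sum_s [Z(\n_s)],
\]
with $\RSK(\n_s) < \RSK(\drv\m)$ for each $s$. Multiplying through by $Z(\ldrr{\m})$ reduces the conjecture to two sub-claims: the \emph{leading-term} assertion that $[Z(\drv\m) \times Z(\ldrr{\m})] = [Z(\m)] + \sum_t [Z(\mathfrak{p}_t)]$ with $\RSK(\mathfrak{p}_t) < \RSK(\m)$, and the \emph{monotonicity} assertion that for every multisegment $\mathfrak{r}$ with $\RSK(\mathfrak{r}) < \RSK(\drv\m)$ and every Jordan--H\"older constituent $Z(\mathfrak{q})$ of $Z(\mathfrak{r}) \times Z(\ldrr{\m})$, one has $\RSK(\mathfrak{q}) < \RSK(\m)$.

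For the leading-term assertion, Theorem \ref{thm: main} together with \cite{MR3573961} already guarantees that $Z(\m)$ appears as the socle of $Z(\drv\m) \times Z(\ldrr{\m})$ with multiplicity one, so the task reduces to controlling the RSK data of the other constituents $\mathfrak{p}_t$. I would run a secondary induction on $\abs{\m}$ tracking each $\mathfrak{p}_t$ through the M\oe glin--Waldspurger recursion: when $\min \m = \min \ldrr{\m}$, Lemma \ref{lem: sc} exhibits a row-by-row peeling that directly yields the shape comparison; when $\min \m < \min \ldrr{\m}$, Corollary \ref{cor: main} permits passage from $\m$ to $\m^{\MWn}$ (of strictly smaller size) and relates the RSK data of $\mathfrak{p}_t$ and $\m$ through a single MW step. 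Combining this with Proposition \ref{prop: lm} and standard Jordan--H\"older bookkeeping should force each $\mathfrak{p}_t$ to strictly decrease $\sh((P_{\mathfrak{p}_t})_{\ge r})$ or $\sh((Q_{\mathfrak{p}_t})_{\ge r})$ for some $r$.

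The principal obstacle is the monotonicity assertion: one must show that the operation of tensoring with the fixed ladder representation $Z(\ldrr{\m})$ preserves the RSK order among irreducible constituents. This appears to be a shadow of the tensor product rule for crystals. Under the identification of the Grothendieck ring with a module carrying the dual canonical basis, multiplication by $Z(\ldrr{\m})$ corresponds to tensoring with a distinguished highest-weight crystal, and the dominance order on inverted Young tableaux should match the natural ordering of highest-weight vectors; monotonicity would then follow from the standard behaviour of this order under crystal tensor products. A rigorous treatment likely requires the key-tableau and Demazure-crystal perspective developed in the Appendix, or passage through the Brundan--Kleshchev isomorphism and the quantum affine Schur--Weyl framework sketched in the introduction. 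I expect this step to be the main difficulty because the ad hoc combinatorial order of \S\ref{sect: tri} has not been identified in the main body of the paper with any standard crystal-theoretic partial order.

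An alternative, more self-contained route is to interpolate via the enhanced multisegment family $\std_\partial(\m)$ of Proposition \ref{prop: std}. For extreme choices of $\partial$ that family recovers either the Zelevinsky or the Langlands standard module, both classically known to admit upper-unitriangular expansions into irreducibles with respect to their natural orders. One could hope to deform $\partial$ through a sequence of elementary moves, showing that triangularity persists at each step, and then transfer the conclusion back to $\partial = 0$. Executing this would demand a careful comparison of the RSK order with the Zelevinsky order on multisegments, a question of independent interest that may also shed light on the geometric aspects raised at the end of the introduction.
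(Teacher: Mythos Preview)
This statement is a \emph{conjecture} in the paper, not a theorem: the authors offer no proof, only computational verification for multisegments with at most $8$ segments and a remark that they do not know whether the induced partial order on $\Mult$ has appeared elsewhere or admits a geometric interpretation. So there is no ``paper's own proof'' to compare against.

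Your proposal is accordingly not a proof but a strategic outline, and you are candid about this. Both of your sub-claims are genuinely open. For the leading-term assertion, the sketch via the M\oe glin--Waldspurger recursion is too vague: Corollary~\ref{cor: main} tells you how $\Vien$ and $\MW$ interact on $\m$ itself, but gives no control over the RSK data of the \emph{other} irreducible constituents $\mathfrak{p}_t$ of $Z(\drv\m)\times Z(\ldrr{\m})$; there is no mechanism in the paper relating $\RSK(\mathfrak{p}_t)$ to $\RSK(\mathfrak{p}_t^{\MWn})$ in a way that would let the induction go through. For the monotonicity assertion you yourself flag the gap: the identification of the order of \S\ref{sect: tri} with a crystal-theoretic order is precisely what is missing, and neither the appendix nor the references supply it. The alternative route through $\std_\partial$ is likewise speculative---Proposition~\ref{prop: std} identifies only the two extreme members of the family, and nothing is proved about how Jordan--H\"older constituents vary as $\partial$ moves.

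In short, your decomposition into leading-term and monotonicity sub-claims is a reasonable way to organize an attack, but neither sub-claim is established, and the paper's authors evidently did not see a way to close these gaps either.
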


We verified this conjecture by computer calculation for all multisegments consisting of $n$ segments with $n\le 8$.
The computation involves writing $Z(\la_k)\times\dots\times Z(\la_1)$ in terms of standard modules (\cite{MR3163355}) and decomposing standard modules into
irreducible representations -- the multiplicities are given by the value at $1$ of Kazhdan--Lusztig polynomials with respect to the symmetric group $S_n$.

We do not know whether the partial order on $\Mult$ (or on the symmetric group for that matter)
given by $\m_1\le\m_2\iff\RSK(\m_1)\le\RSK(\m_2)$ has already been considered in the literature.
Likewise, we are so far unaware of a befitting geometric interpretation or a simpler combinatorial description of this partial order.

Let $\mathcal{R}_n$, $n\ge0$ be the Grothendieck groups of $\GL_n(F)$, $n\ge0$
and let $\mathcal{R}'$ be the subgroup of $\oplus_{n\ge0}\mathcal{R}_n$ generated by $[Z(\m)]$, $\m\in\Mult$.

Conjecture \ref{conj: triang} would imply the following weaker conjecture.

\begin{conjecture}\label{conj: basis}
The classes of RSK-standard representations
\[
\std(\m),\ \m\in\Mult,
\]
form a $\Z$-basis for $\mathcal{R}'$.
\end{conjecture}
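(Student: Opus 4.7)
The plan is to derive Conjecture~\ref{conj: basis} directly from the stronger Conjecture~\ref{conj: triang}. By the Zelevinsky classification, the classes $\{[Z(\m)] : \m\in\Mult\}$ form a $\Z$-basis of $\mathcal{R}'$: they span it by the very definition of $\mathcal{R}'$, and are linearly independent as classes of pairwise non-isomorphic irreducibles. Since $\RSK$ is injective, the partial order on $\Lads$ from \S\ref{sect: tri} pulls back to a partial order on $\Mult$ via $\m\mapsto\RSK(\m)$; fix any linear extension $\le_0$ of it. Conjecture~\ref{conj: triang} then asserts that the matrix expressing $\{[\std(\m)]\}_{\m\in\Mult}$ in terms of $\{[Z(\m)]\}_{\m\in\Mult}$ is upper unitriangular with respect to $\le_0$ (the diagonal entry corresponding to $\m$ is $1$, and all other nonzero off-diagonal entries are indexed by $\n$ with $\RSK(\n)<\RSK(\m)$). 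Any such matrix is invertible over $\Z$, so the inverse change of basis is again integral, and $\{[\std(\m)]\}_{\m\in\Mult}$ is a $\Z$-basis of $\mathcal{R}'$.

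The genuine work is to establish Conjecture~\ref{conj: triang}. I would argue by induction on $|\m|$, using the recursive construction $\std(\m)=Z(\la_k)\times\cdots\times Z(\la_1)$ with $\RSK(\m)=(\la_1,\dots,\la_k)$. Theorem~\ref{thm: main} already guarantees that $Z(\m)$ is a constituent of $\std(\m)$. For any other irreducible constituent $Z(\n)$, one has to show $\RSK(\n)<\RSK(\m)$, and also that $Z(\m)$ appears with multiplicity exactly one. A natural two-step strategy is: (a) expand $[\std(\m)]$ in the Zelevinsky basis $\{[\zeta(\m')]\}$ by iterating the determinantal identities for products of ladder representations from \cite{MR3163355}, so that any constituent $Z(\n)$ of $\std(\m)$ must occur in some $\zeta(\m')$ with $\m'$ ranging over a controlled family; (b) compare the classical Zelevinsky (Bruhat-type) partial order, which dictates the triangularity between $\{[\zeta(\m')]\}$ and $\{[Z(\m')]\}$, with the RSK partial order, to upgrade each Bruhat-type inequality $\n\le_{Z}\m'$ into $\RSK(\n)<\RSK(\m)$.

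The hard part will be step (b): no direct combinatorial link between the Zelevinsky order and the RSK order on $\Mult$ seems to be known, and the available evidence (verified for $|\m|\le 8$) is empirical. A promising lever is the commutation established in Corollary~\ref{cor: main} between the RSK algorithm and the M\oe glin--Waldspurger algorithm, which should enable an inductive transport between the two orders, possibly mediated by the Sch\"utzenberger involution (cf.\ Remark~\ref{rem: contra}) and the crystal-theoretic viewpoint of the appendix. A fallback approach that bypasses Conjecture~\ref{conj: triang} entirely is to exploit the categorification connection to the Doubilet--Rota--Stein basis of matrix polynomial rings: Proposition~\ref{prop: basis-rota}, combined with the DRS basis theorem of \cite{MR0485944}, should yield the linear independence of the classes $[\std(\m)]$ inside an ambient polynomial ring, and together with a dimension count on multisegments with prescribed support this would already deliver Conjecture~\ref{conj: basis} without requiring the full triangularity statement.
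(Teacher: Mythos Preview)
This statement is a \emph{conjecture} in the paper, not a theorem: the paper does not prove it. Your first paragraph correctly spells out the one-line implication the paper itself records---that the (conjectural) unitriangularity of Conjecture~\ref{conj: triang} immediately yields Conjecture~\ref{conj: basis}---but this is a reduction to another open conjecture, not a proof.

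The rest of your proposal is a proof \emph{strategy} for Conjecture~\ref{conj: triang}, and the gap is precisely your step~(b): no comparison between the Zelevinsky order and the RSK order on $\Mult$ is currently known, and the paper explicitly flags this as open (the evidence is computational, for multisegments with at most $8$ \emph{segments}, not $|\m|\le 8$). Corollary~\ref{cor: main} and the crystal viewpoint of the appendix are natural tools, but nobody has made them carry this comparison.

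Your fallback via the DRS basis is also insufficient. Proposition~\ref{prop: basis-rota} applies only in the \emph{saturated} case, where $\RSK$ is a bijection onto all bitableaux with the prescribed entry sets (Remark~\ref{rem: imrsk}), so that the RSK-standard classes literally coincide with the DRS bitableaux basis. In the non-saturated case the image of $\RSK$ is a proper subset of $\Lads$ (Remark~\ref{rem: imrsk}, Theorem~\ref{T:triangular key}), and the DRS straightening algorithm a~priori produces bitableaux outside this image; showing that straightening can be confined to RSK-standard classes indexed by genuine multisegments is essentially the content of Conjecture~\ref{conj: triang} again. A ``dimension count on multisegments with prescribed support'' does not circumvent this, since linear independence alone (even if one had it) does not give spanning over $\Z$ without the triangularity.
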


\subsection{Relation to the DRS basis}\label{sec: rota}

Recall that $\mathcal{R}'$, equipped with the parabolic induction product, becomes a commutative ring, which is freely generated
by the variables $\{[Z(\Delta)]\}_{\Delta\in \Seg}$.
With these generators, the classes of standard representations become the monomial basis for $\mathcal{R}'$ (as a $\Z$-module).

Let $\overline{a}, \overline{b}\subset \Z$ be two given subsets of integers.
Let $\Mult_{\overline{a},\overline{b}}$ be the collection of multisegments $\m$,
for which the begin points (resp. end points) of all segments comprising $\m$ belong to $\overline{a}$ (resp. $\overline{b}$).
Let $\mathcal{R}_{\overline{a},\overline{b}}$ be the subgroup of $\mathcal{R}'$ generated by $[Z(\m)]$,
$\m\in\Mult_{\overline{a},\overline{b}}$.
In other words, $\mathcal{R}_{\overline{a},\overline{b}}$ is the subring of $\mathcal{R}'$ generated by
the classes of $Z([a,b])$ where $a\in\overline{a}$ and $b\in\overline{b}$ and $a\le b$.
By the properties of $\RSKn$, we have $[\std(\m)]\in \mathcal{R}_{\overline{a},\overline{b}}$, for all
$\m\in \Mult_{\overline{a},\overline{b}}$.
Clearly, Conjecture \ref{conj: basis} is equivalent to the statement that $\std(\m)$, $\m\in\Mult_{\overline{a},\overline{b}}$,
is a $\Z$-basis for $\mathcal{R}_{\overline{a},\overline{b}}$, for all $\overline{a},\overline{b}$.

Now, assume that $\overline{a},\overline{b}$ are such that, $a\leq b$, for every $a\in \overline{a}$ and $b\in \overline{b}$
(the so-called saturated case -- cf. Remark \ref{rem: imrsk}).
Then, $\mathcal{R}_{\overline{a},\overline{b}}$ can be identified with the coordinate ring of the space of matrices of size $|\overline{a}|\times |\overline{b}|$.
Bases for such polynomial rings were considered in \cites{MR0498650, MR0485944} in the context of combinatorial invariant theory.
Our class of RSK-standard modules inside the rings $\mathcal{R}_{\overline{a},\overline{b}}$ becomes precisely the bitableaux basis considered in [ibid.].
Thus, with the above identifications in place, we obtain a weak form of Conjecture \ref{conj: basis}.
\begin{proposition} (cf. \cites{MR0498650, MR0485944, 1605.06696}) \label{prop: basis-rota}
For $\overline{a}, \overline{b}\subset \Z$ in the saturated case, the classes
\[
[\std(\m)],\ \m\in\Mult_{\overline{a},\overline{b}},
\]
form a $\Z$-basis for $\mathcal{R}_{\overline{a},\overline{b}}$.
\end{proposition}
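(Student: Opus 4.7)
The plan is to translate the statement into a known basis result for the coordinate ring of a matrix space, via the dictionary between ladder classes and minors.

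First, in the saturated case every pair $(a,b)\in\overline{a}\times\overline{b}$ gives rise to an allowed segment $[a,b]\in\Seg$, and by Zelevinsky's theorem these classes are algebraically independent generators of $\mathcal{R}_{\overline{a},\overline{b}}$. Thus we have an isomorphism
\[
\mathcal{R}_{\overline{a},\overline{b}} \;\cong\; \Z[x_{a,b}]_{a\in\overline{a},\,b\in\overline{b}}, \qquad [Z([a,b])] \;\longleftrightarrow\; x_{a,b},
\]
and the right-hand side is the coordinate ring of the space of $|\overline{a}|\times|\overline{b}|$ matrices. Next, the determinantal identity for ladder representations from \cite{MR3163355} says exactly that for a ladder $\la = \Delta_1+\cdots+\Delta_k$ with $\Delta_{i+1}\smlr\Delta_i$, the class $[Z(\la)]$ is a signed alternating sum of monomials in the $[Z([b(\Delta_i),e(\Delta_j)])]$. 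Under our identification this becomes
\[
[Z(\la)] \;\longleftrightarrow\; \det\!\bigl(x_{b(\Delta_i),\,e(\Delta_j)}\bigr)_{i,j=1}^{k},
\]
i.e.\ the minor of the generic matrix whose row set is $\{b(\Delta_i)\}$ and whose column set is $\{e(\Delta_j)\}$.

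Given $\m\in\Mult_{\overline{a},\overline{b}}$ with $\RSK(\m)=(\la_1,\dots,\la_k)$, the commutativity of $\mathcal R'$ lets us write
\[
[\std(\m)] \;=\; \prod_{i=1}^{k}[Z(\la_i)],
\]
so after the identification $[\std(\m)]$ is a product of minors. The row indices of the $i$-th minor are the entries of the $i$-th row of $P_\m$ and its column indices are the entries of the $i$-th row of $Q_\m$. Since each of $P_\m,Q_\m$ is an inverted Young tableau (with weakly decreasing columns and strictly decreasing rows), reading them in reverse row order converts them to classical semistandard tableaux, and $[\std(\m)]$ is, up to reading convention, precisely the bitableau monomial attached to the pair $(P_\m,Q_\m)$ in the sense of Doubilet--Rota--Stein.

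Finally, by the DRS straightening/basis theorem (as proved in \cite{MR0485944}; cf.\ also \cite{MR0498650, 1605.06696}), the bitableau monomials indexed by pairs of semistandard Young tableaux of the same shape, with row entries in $\overline{a}$ and column entries in $\overline{b}$, form a $\Z$-basis for the coordinate ring of the matrix space. Because $\RSKn$ restricts, in the saturated case, to a bijection between $\Mult_{\overline{a},\overline{b}}$ and the set of such bitableau pairs (this is precisely the statement at the end of Remark \ref{rem: imrsk}), the proposition follows. The main subtlety — and the only place where real care is required — is bookkeeping: reconciling the inverted tableau conventions of \S\ref{sec: Vien} with the conventions of the DRS basis, and verifying that the sign from the determinantal identity combines correctly so that $[\std(\m)]$ and the DRS bitableau agree on the nose (not merely up to sign or leading term).
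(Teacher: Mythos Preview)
Your proposal is correct and follows essentially the same route as the paper's own proof: identify $\mathcal{R}_{\overline{a},\overline{b}}$ with the polynomial ring via Zelevinsky, use the determinantal identity of \cite{MR3163355} to match $[Z(\la)]$ with a minor, recognize $[\std(\m)]$ as the DRS bitableau attached to $(P_{\m},Q_{\m})$, and conclude by the basis theorem of \cite{MR0485944} together with the bijectivity of $\RSK$ in the saturated case from Remark~\ref{rem: imrsk}. Your closing remark about signs and conventions is cautionary rather than substantive---the paper simply asserts the identification without dwelling on it---but nothing in your argument is missing or mistaken.
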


\begin{proof}
Under our assumptions, $\RSK$ gives a bijection between $\Mult_{\overline{a},\overline{b}}$ and the set of pairs of tableaux $(P,Q)$ in $\mathcal{T}$,
for which $P$ (resp. $Q$) is filled by numbers from $\overline{a}$ (resp. $\overline{b}$) (see Remark \ref{rem: imrsk}). Thus, it is enough to prove that the set of elements
\[
\mathcal{B} = \left\{ [Z(\la_1)]\cdot\ldots\cdot[Z(\la_m)]\;:\; (\la_1,\ldots,\la_m)\in \Lads',\; \la_i\in \Mult_{\overline{a},\overline{b}} \right\}
\]
gives a basis for $\mathcal{R}_{\overline{a},\overline{b}}$.

For a ladder $\la = \sum_{i=1}^k [a_i,b_i]$, we have the following determinantal identity \cite{MR3163355}
\[
[Z(\mathfrak{l})] = \sum_{\sigma\in S_k} \epsilon(\sigma) \prod_{i=1}^k [Z([a_i, b_{\sigma(i)}])]\;,
\]
as elements of $\mathcal{R}'$. Here, $\epsilon(\sigma)$ is the sign of the permutation $\sigma$.

Identifying $[Z([a_i, b_j])]$ with variables in the polynomial ring $\mathcal{R}_{\overline{a},\overline{b}}$, we see that our construction of $\mathcal{B}$ coincides with the
construction of the basis in \cite{MR0485944}*{\S4}.
\end{proof}

\appendix

\section{Multisegments, MW, and \texorpdfstring{RSK\\by Mark Shimozono\footnote{Department of Mathematics,
Virginia Polytechnic Institute and State University; \email{mshimo@math.vt.edu}}}{}}
\label{sec:appendix}

\newcommand{\fixit}[1]{{\texttt{\color{red} {*** #1 ***}}}}
	
This appendix contains two main combinatorial results.
The first result, Proposition \ref{P:GL}, describes the effect of the map $\MW$ on the cRSK tableau pair.
The second result, Theorem \ref{T:triangular key}, is a characterization of the image of the set of multisegments under RSK expressed in terms of the right and left key tableau construction of Lascoux and Sch\"utzenberger, applied to the RSK tableau pair.
The complete proofs will appear in a separate publication.

In this appendix, to conform with the bulk of the combinatorics literature, semistandard Young tableaux are used; in contrast, in the main part of this article, inverted Young tableaux
are used; see Remark \ref{R:inverted Young tableau}.

\subsection{Partitions and skew shapes}\label{sec: shapes}
Let $\bY$ be Young's lattice of partitions.
The diagram $D(\lambda)$ of $\lambda\in\bY$ is the set $D(\lambda)=\{(i,j)\in\Z_{>0}^2\mid 1\le j\le\lambda_i\}$. The $(i,j)$-th box is in the $i$-th row and $j$-th column with matrix-style indexing. The transpose or conjugate partition $\lambda^t\in\bY$ is defined by $(i,j)\in D(\lambda)$ if and only if $(j,i)\in D(\lambda^t)$. The notation $\mu\subset\lambda$ means $D(\mu)\subset D(\lambda)$. If $\mu\subset\lambda$ let
$\lambda/\mu:=D(\lambda)\setminus D(\mu)$; this is called a skew shape. The skew shape refers to this set difference of boxes, and can be achieved by various pairs $(\lambda,\mu)$.
A skew shape is \textit{normal} (resp. antinormal) if it has a unique northwestmost (resp. southeastmost) corner.
A skew shape is a horizontal (resp. vertical) strip if it has at most one box in each column (resp. row). Let $\cH$ (resp. $\cV$) denote the set of horizontal (resp. vertical) strips and let $\cH_r$ (resp. $\cV_r$) be the set of horizontal (resp. vertical) strips of size $r$.

\subsection{Skew tableaux}
In this section a semistandard tableau of shape $\lambda/\mu$ is a function $T:\lambda/\mu\to\Z_{>0}$ which is weakly increasing within rows going from left to right and strictly decreasing from bottom to top. Let $\Tab_{\lambda/\mu}$ denote the set of semistandard tableaux of shape $\lambda/\mu$.
A \textit{tableau} (resp. antitableau) is a semistandard skew tableau of normal (resp. antinormal) shape. More precisely, by a skew tableau we mean a translation equivalence class of such maps.
The (row-reading) word of a skew tableau $T$ is defined by
$\word(T) = \dotsm u^{(2)} u^{(1)}$ where $u^{(i)}$ is the word obtained by reading the $i$-th row of $T$ from left to right. Using the row reading word a skew tableau may be regarded as a word. A \textit{tableau} (resp. antitableau) \textit{word} is a word of the form $\word(T)$ where $T$ is a tableau (resp. antitableau).

\subsection{Knuth equivalence}
Let $\equiv$ be Knuth's equivalence relation on the set of words $\bA^*$ with symbols in the totally ordered set $\bA = \{1<2<\dotsm<n\}$
\cite{MR0272654}.
It is the transitive closure of relations $uxzyv\equiv uzxyv$ for $x\le y<z$ and $uyxzv\equiv uyzxv$ for $x<y\le z$ where $x,y,z\in\bA$ and $u,v\in\bA^*$.

\begin{proposition} \cite{MR0272654}
Every Knuth class in $\bA^*$ contains a unique tableau word and a unique antitableau word.
\end{proposition}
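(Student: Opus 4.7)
The plan is to leverage Schensted's row-insertion algorithm as the bridge between Knuth equivalence classes and tableaux. Given a word $w\in\bA^*$, write its letters as $w=w_1w_2\cdots w_n$ and define $P(w)$ to be the semistandard tableau obtained by row-inserting $w_1,w_2,\dots,w_n$ into the empty tableau in that order. The claim is that $w\mapsto P(w)$ descends to the set of Knuth classes and that its fibers are exactly the Knuth classes; furthermore, the map $T\mapsto\word(T)$ from tableaux to tableau words is a right inverse, giving both existence and uniqueness of the tableau-word representative.

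First I would verify that each elementary Knuth relation preserves $P$. That is, if $w\equiv w'$ by a single application of $uxzyv\equiv uzxyv$ with $x\le y<z$, or $uyxzv\equiv uyzxv$ with $x<y\le z$, then $P(w)=P(w')$. This reduces, after inserting the common prefix $u$, to a local statement about how three successive row-insertions interact with an arbitrary tableau. The argument is a case analysis on whether the inserted letters bump or append on the first row, and one checks that the two Knuth-related triples lead to the same outcome on the bumped letters passed to the second row, so that an inductive argument on the number of rows closes the loop. This step — essentially Knuth's original verification — is the main technical obstacle, as it requires a careful but finite case check.

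Next I would show that for any tableau $T$, one has $P(\word(T))=T$. Reading $T$ row-by-row from bottom to top and inserting, the bottom row (being weakly increasing) appends without bumping; inserting the next row above, whose entries are each strictly less than the entry directly below, likewise appends row-by-row onto the partially built tableau. Combining this with the first step yields existence: for any $w$, $\word(P(w))$ is a tableau word Knuth-equivalent to $w$. Uniqueness follows immediately: if $T_1$ and $T_2$ are tableaux with $\word(T_1)\equiv\word(T_2)$, then $T_1=P(\word(T_1))=P(\word(T_2))=T_2$.

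Finally, for the antitableau word, I would run the symmetric argument using column insertion from right to left (or equivalently, conjugate by the order-reversing involution of $\bA$, which sends tableaux to antitableaux, and check that this involution preserves the Knuth relations on words). This produces a well-defined map $P^{\mathrm{anti}}$ whose image lands in antitableaux, is constant on Knuth classes, and satisfies $P^{\mathrm{anti}}(\word(S))=S$ for every antitableau $S$, proving existence and uniqueness of the antitableau-word representative by the same formal argument as in the tableau case.
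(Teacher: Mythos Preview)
The paper does not prove this proposition; it simply cites Knuth \cite{MR0272654}. So there is no ``paper's own proof'' to compare against, and your outline is essentially the standard argument. That said, there is a genuine gap in your existence step.

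You correctly isolate two ingredients: (i) $P$ is constant on Knuth classes, and (ii) $P(\word(T))=T$ for every tableau $T$. From these you conclude ``for any $w$, $\word(P(w))$ is a tableau word Knuth-equivalent to $w$.'' But (i) and (ii) together do \emph{not} yield $w\equiv\word(P(w))$. They only say that $P$ factors through Knuth classes and is a retraction onto tableaux; they do not rule out the possibility that the Knuth classes are strictly finer than the fibers of $P$. What is missing is the companion statement to (i): that a single row-insertion can itself be realized by a sequence of elementary Knuth moves, i.e.\ $\word(T)\,x \equiv \word(T\leftarrow x)$ for any tableau $T$ and letter $x$. This is another (easier) local computation, proved by induction on the number of rows, and once you have it an induction on the length of $w$ gives $w\equiv\word(P(w))$. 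Your uniqueness argument via (i) and (ii) is fine as written.

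For the antitableau half, your first suggestion (column insertion) goes through by the symmetric argument, again once you add the analogue of the missing step. Your alternative via an order-reversing involution requires care: plain reversal of the word does \emph{not} preserve Knuth equivalence, and plain complementation in $\bA$ does not either; it is the composite ``reverse the word and complement the letters'' that intertwines the two Knuth relations. If you take that route you should state and check this explicitly.
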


Denote by $\bP(u)$ the unique tableau whose word is Knuth equivalent to $u$. This is the Schensted $P$-tableau. For a tableau $T$ denote by $T^\anti$ the unique antitableau whose word is Knuth equivalent to the word of $T$. Then $\bP(u)^\anti$ is the unique antitableau word that is Knuth equivalent to $u$.

\begin{remark} \label{R:inverted Young tableau}
\begin{enumerate}
\item
An inverted Young tableau of \S \ref{sec: Vien} is the same thing as
an antitableau except written a different way: they are mapped to each other by
reflection across a northeast to southwest antidiagonal. See further \S\ref{sec: the RSKs}.
\item Passing from a tableau $T$ to its antitableau $T^\anti$ is a nontrivial computation.
Given the word of $T^\anti$, taking its reverse complement with respect to some ambient alphabet containing all appearing entries, directly gives the word of a tableau known as the Sch\"utzenberger involution of $T$.
\end{enumerate}
\end{remark}

\subsection{Pieri bijections}
A \textit{row word} (resp. \textit{column word}) is a weakly increasing (resp. strictly decreasing) word. Let $\Row$ (resp. $\Col$) denote the set of row (resp. column) words and $\Row_r$ (resp. $\Col_r)$ those of length $r$.

\begin{proposition} \label{P:Pieri} \cite{MR646486}*{Th\'eor\`eme 2.10}
Let $\mu\in\bY$ and $r\in\Z_{\ge0}$.
There are bijections
\begin{align}
\label{E:left row Pieri}
  \Row_r \times \Tab_\mu &\cong \bigsqcup_{\substack{\lambda\in\bY \\ \lambda/\mu\in\cH_r}} \Tab_\lambda &\qquad (u, T) &\mapsto \bP(uT) \\
\label{E:left column Pieri}
  \Col_r \times \Tab_\mu &\cong \bigsqcup_{\substack{\lambda\in\bY \\ \lambda/\mu\in\cV_r}} \Tab_\lambda &  (u, T) &\mapsto \bP(uT) \\
  \label{E:right row Pieri}
 \Tab_\mu\times \Row_r &\cong \bigsqcup_{\substack{\lambda\in\bY \\ \lambda/\mu\in\cH_r}} \Tab_\lambda &  (T,u) &\mapsto \bP(Tu) \\
 \label{E:right column Pieri}
   \Tab_\mu\times \Col_r &\cong \bigsqcup_{\substack{\lambda\in\bY \\ \lambda/\mu\in\cV_r}} \Tab_\lambda &  (T,u) &\mapsto \bP(Tu).
\end{align}
\end{proposition}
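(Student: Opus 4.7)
The plan is to prove all four bijections uniformly by combining two ingredients: the shape-change theorem for Schensted insertion of monotone words, and the reversibility of row/column bumping. Since the four statements are entirely symmetric (left/right versions differ only by reading the word in the opposite direction, and row/column versions are dual under transpose), it suffices to treat \eqref{E:left row Pieri} in detail and indicate how the others follow.

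First I would verify that each map is well-defined. For \eqref{E:left row Pieri}, given a row word $u = x_1 \le x_2 \le \cdots \le x_r$ and $T \in \Tab_\mu$, the concatenated word $uT$ has $\bP(uT)$ of some shape $\lambda$ with $|\lambda/\mu|=r$ (since $|\bP(w)|=|w|$ and $\bP(T)=T$ preserves $T$'s shape). I need the classical fact that row-inserting a weakly increasing sequence into a tableau adds a horizontal strip: when one row-inserts $x_i$ then $x_{i+1}$ with $x_i \le x_{i+1}$, the new box created by $x_{i+1}$ lies strictly east of the new box created by $x_i$ in every row where bumping occurs (and in particular in the final row of the new box). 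Iterating, the $r$ added boxes form a horizontal strip, so $\lambda/\mu \in \cH_r$. The dual statement, that inserting a strictly decreasing column word yields a vertical strip, handles \eqref{E:left column Pieri}.

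Next I would construct the inverse of \eqref{E:left row Pieri}. Given $U \in \Tab_\lambda$ with $\lambda/\mu \in \cH_r$, I iteratively perform reverse row bumping on $U$: enumerate the boxes of $\lambda/\mu$ in order of \emph{decreasing} column index (which is unambiguous since $\lambda/\mu$ is a horizontal strip), and at the $i$-th step remove the $i$-th such box and reverse-bump, recording the ejected letter $x_i$. The residual tableau after all $r$ steps has shape $\mu$; call it $T$. Two things need checking: (a) $T$ is genuinely a tableau (which follows from reverse bumping being the pointwise inverse of row insertion); (b) $x_1 \le x_2 \le \cdots \le x_r$, so that $u = x_1 x_2 \cdots x_r$ is a row word. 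Point (b) is the content of a standard lemma on reverse row bumping from horizontal strips: reverse-bumping from a box $\mathsf{b}$ and then from a box $\mathsf{b}'$ strictly west of $\mathsf{b}$ in the same strip yields ejected letters $x \le x'$. This symmetric partner of the forward shape-change statement is the technical heart of the proof; together with forward bumping being the compositional inverse of reverse bumping, it shows $(u,T) \mapsto \bP(uT)$ and $U \mapsto (u,T)$ are mutually inverse.

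For \eqref{E:left column Pieri}, the same argument runs with ``horizontal strip'' replaced by ``vertical strip,'' column indices enumerated in \emph{increasing} order, and the ejected letters $x_1 > x_2 > \cdots > x_r$ forming a column word. The right-handed versions \eqref{E:right row Pieri} and \eqref{E:right column Pieri} can be obtained either by running the symmetric construction with the word $u$ appended after $T$ (so the bijection uses forward bumping from the right) or, more slickly, by applying the Sch\"utzenberger involution to reduce to the left-handed cases already proved (using that $\bP(w^{\mathrm{rev}\,*})$ is the Sch\"utzenberger involute of $\bP(w)$ and that reversing a row word produces a column word of the reverse-complemented alphabet and vice versa).

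The main obstacle I anticipate is the monotonicity claim for the sequence of ejected letters produced by iterated reverse bumping out of the horizontal strip. While intuitively clear, a careful proof requires tracking the bumping paths at each stage and invoking the non-crossing property of bumping routes arising from monotone insertions (the classical ``bumping paths move weakly east/strictly east'' lemma of Schensted). Once this lemma is established, the remainder of the argument is a direct verification that forward and reverse bumping compose to the identity.
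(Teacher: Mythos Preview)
The paper does not prove this proposition at all: it is quoted as Th\'eor\`eme~2.10 of Lascoux--Sch\"utzenberger and followed only by two worked examples (Examples~\ref{X:left column factor} and~\ref{X:right column factor}) illustrating how to compute the inverse maps via reverse column and reverse row insertion respectively. So there is no ``paper's proof'' to compare against; your outline is the standard textbook argument and is essentially correct in structure.

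That said, there is a systematic left/right mix-up in your write-up that you should fix. For \eqref{E:left row Pieri} the tableau $\bP(uT)$ is \emph{not} obtained by row-inserting the letters of $u$ into $T$: with the usual convention $\bP(vw)=\bP(v)\leftarrow w$, the object $\bP(uT)$ is the one-row tableau $u$ with $\word(T)$ row-inserted into it, or equivalently the result of \emph{column}-inserting $u$ into $T$ from right to left. The relevant shape lemma is therefore ``column-inserting a weakly \emph{decreasing} sequence adds a horizontal strip,'' not the row-insertion statement you invoke. Your argument as written is literally the proof of \eqref{E:right row Pieri}. Likewise, the inverse of \eqref{E:left column Pieri} is computed by reverse \emph{column} insertion (exactly as in the paper's Example~\ref{X:left column factor}), not reverse row insertion. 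None of this changes the architecture of your proof---shape-change lemma plus reversibility of bumping---but the specific bumping direction and monotonicity claim need to be matched correctly to each of the four cases.
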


\begin{example} \label{X:left column factor}
	We illustrate how to compute the inverse of the bijection \eqref{E:left column Pieri}. Let $\lambda=(4,3,3,2,1)$ and $\mu=(3,2,2,1,1)$. Let $U$ be the unique skew standard tableau of shape $\lambda/\mu\in \cV$, in which the boxes are labeled in increasing value from top to bottom. Pictured below is a tableau $X$ of shape $\lambda$. We perform successive Schensted reverse column insertions starting at the boxes of $U$ starting with the bottommost.
Each reverse insertion produces an additional single entry factored to the left: the bumping path along which entries shift to the left, is depicted in green.
\begin{align*}\ytableausetup{smalltableaux}
	U = \begin{ytableau}
		{}&{}&{}&1 \\ {}&{}&2\\  {}&{}& 3 \\ &4 \\ {}
	\end{ytableau} \quad
X=\begin{ytableau} 1&2&3&3\\2&3&4\\4&4&5\\5&*(green) 6\\ *(green)6 \end{ytableau} \equiv \begin{ytableau} \none&1&2&3&3\\ \none&2&3&4\\ \none&*(green)4&*(green)4&*(green)5\\ \none&5 \\ \none&6 \\ 6\end{ytableau}
	\equiv \begin{ytableau} \none&1&2&3&3\\ \none&*(green)2&*(green)3&*(green)4\\
		\none&4&5 \\ \none&5 \\ \none&6 \\ 4 \\ 6\end{ytableau}
	\equiv \begin{ytableau} \none&*(green)1&*(green)2&*(green)3&*(green)3\\ \none&3&4\\
		\none&4&5 \\ \none&5 \\ \none&6 \\ 2  \\ 4 \\ 6\end{ytableau}
	\equiv \begin{ytableau} \none&2&3&3 \\ \none&3&4 \\ \none&4&5\\ \none &5 \\ \none&6 \\ 	1\\2\\4\\6\end{ytableau} = u T.
\end{align*}
\end{example}

\begin{example} \label{X:right column factor} We illustrate how to compute the inverse of the bijection \eqref{E:right column Pieri}. We use the same $\lambda$, $\mu$, and $U$ as before but a different tableau $Y$. We perform successive Schensted reverse row insertions starting at the boxes of $U$ starting with the bottommost. Each reverse insertion produces an additional single entry factored to the right: the bumping path along which entries shift upwards, is depicted in green.
	\begin{align*}\ytableausetup{aligntableaux=center}
		Y = \begin{ytableau} \none \\ \none \\ \none \\ \none \\
			1&1&*(green)1&2 \\
			2&2&*(green)2 \\
			3&3&*(green)3 \\
			4&*(green)6 \\
			5
		\end{ytableau}
		\equiv \begin{ytableau} \none \\ \none \\ \none \\
			\none&\none&\none&\none&1 \\
			1&1&2&*(green)2 \\
			2&2&*(green)3\\
			3&3&*(green)6 \\
			4\\5\end{ytableau}\equiv
		\begin{ytableau} \none \\ \none \\
			\none&\none&\none&\none&1 \\
			\none&\none&\none&\none&2 \\
			1&1&2&*(green)3 \\
			2&2&*(green)6\\
			3&3 \\
			4\\5\end{ytableau}\equiv
		\begin{ytableau} \none \\
			\none&\none&\none&\none&1 \\
			\none&\none&\none&\none&2 \\
			\none&\none&\none&\none&3 \\
			1&1&2&*(green)6 \\
			2&2\\
			3&3 \\
			4\\5\end{ytableau}\equiv
		\begin{ytableau}
			\none&\none&\none&1 \\
			\none&\none&\none&2 \\
			\none&\none&\none&3 \\
			\none&\none&\none&6 \\
			1&1&2 \\
			2&2\\
			3&3 \\
			4\\5\end{ytableau} = Tu.
		\end{align*}
\end{example}

\subsection{Column RSK correspondence}
Let $\Row^*$ be the set of infinite sequences of row words
$\ub = (\dotsm, u^{(2)},u^{(1)})$ with $u^{(i)}\in \Row$, only finitely many of which are nonempty.

Given $\ub\in\Row^*$ let $P_0 = \emptyset$ be the empty tableau and for $j\ge1$ let $P_j = \bP(u^{(j)} P_{j-1})$. Let $\bP(\ub)=\bP(\dotsm u^{(2)} u^{(1)})$, the
Schensted $P$-tableau of the juxtaposition of the row words $u^{(i)}$.
Note that $\shape(P_j)/\shape(P_{j-1}) \in \cH$ by Proposition \ref{P:Pieri} \eqref{E:left row Pieri}. This sequence of horizontal strips defines a semistandard tableau $\bQ(\ub)$
of the same shape as $\bP(\ub)$ in which the boxes of the horizontal strip $\shape(P_j)/\shape(P_{j-1})$ are filled with $j$'s for all $j$. This yields a bijection we denote by cRSK, the column insertion Robinson-Schensted-Knuth correspondence:
\begin{align}
\begin{diagram}
	\node{\Row^*} \arrow{e,t}{\cRSK} \node{\bigsqcup_\lambda\, \Tab_\lambda \times \Tab_\lambda}  \\
	\node{\ub} \arrow{e,T}{} \node{(\bP(\ub),\bQ(\ub))}
\end{diagram}
\end{align}

\begin{example} \label{X:PQ}
	Let $\ub = (\dotsm,56,34,233,122,12,1)$. We have $P_0$ empty,
	\begin{align*}
	\ytableausetup{smalltableaux}
	P_1&=\begin{ytableau}1\end{ytableau}&\quad P_2&=\begin{ytableau} 1&1\\2\end{ytableau} &\quad P_3&=\begin{ytableau}1&1&1&2\\2&2\end{ytableau} \\
	P_4&=\begin{ytableau}1&1&1&2\\2&2&2\\3&3\end{ytableau}&
	P_5&=\begin{ytableau}1&1&1&2\\2&2&2\\3&3&3\\4\end{ytableau} \\
	\bP(\ub)=P_6&=\begin{ytableau} 1&1&1&2\\2&2&2\\3&3&3\\4&6\\5\end{ytableau}&
	\bQ(\ub)&=\begin{ytableau} 1&2&3&3\\2&3&4\\4&4&5\\5&6\\6\end{ytableau}
	\end{align*}
\end{example}


%

\subsection{Comparison with multisegment notation}\label{sec: the RSKs}
Without loss of generality in this discussion we only consider segments which are intervals of positive integers. Define the injective map $\iota: \mathfrak{M} \to \Row^*$ such that $\iota(\m)=\ub=(\dotsm u^{(2)},u^{(1)})$ where the word $u^{(j)}$ contains the value $i$ the same number of times the multisegment $[i,j]$ appears in $\m$.
The image of $\iota$ is the set of \textit{flagged} tuples of row words, those such that all values appearing in $u^{(i)}$ are less than or equal to $i$.

Let $\flip$ be the reflection across an antidiagonal. There is an involutive bijection
$\invert: \Tab_\lambda \to \ITab_{\lambda'}$ from the set of semistandard tableaux of shape $\lambda$, to the set $\ITab_{\lambda'}$ of inverted tableaux of the conjugate or transposed shape $\lambda'$, defined by $\invert(T) = \flip(T^\anti)$. That is, first take the antitableau and then flip.
\begin{align*}
\begin{ytableau}
	1&1&4\\
	2&3
\end{ytableau} \to
\begin{ytableau}
	\none&1&1\\2&3&4
\end{ytableau}\to
\begin{ytableau}
	4&1 \\
	3&1 \\
	2
\end{ytableau}
\end{align*}
The following diagram commutes:
\begin{align*}
\begin{diagram}
\node{\mathcal{M}} \arrow{e,t}{\RSKn} \arrow{s,t}{\iota} \node{\bigsqcup_\lambda\, \ITab_\lambda \times \ITab_\lambda} \arrow{s,b}{\invert\times\invert} \\
\node{\Row^*} \arrow{e,b}{\mathrm{cRSK}} \node{\bigsqcup_{\lambda} \Tab_\lambda\times\Tab_\lambda}
\end{diagram}
\end{align*}

Let $\RSKn(\m)=(P_{\m},Q_{\m})$ and $\Vien(\m)=(\la(\m),\drv{\m})$. Then
\begin{itemize}
\item $\la(\m)$ is the ladder $[a_1,b_1]+[a_2,b_2]+\dotsm+[a_r,b_r]$ where
$a_1a_2\dotsm a_r$ and $b_1b_2\dotsm b_r$ are the first rows of
$P_{\m}$ and $Q_{\m}$ respectively.
\item $\drv\m$ is uniquely specified by the condition that
$P_{\drv\m}$ and $Q_{\drv\m}$ are obtained from $P_{\m}$ and $Q_{\m}$ by removing their respective first rows.
\end{itemize}

Let $\iota(\m)=\ub$ so that by the commutativity of the above diagram, $\bP(\ub)=\invert(P_{\m})$ and $\bQ(\ub)=\invert(Q_{\m})$.
Consider the map $\Row^* \to \Lad \times \Row^*$ mapping $\ub$ to $(\la(\ub), \drv{(\ub)})$
commuting the diagram
\begin{align*}
\begin{diagram}
	\node{\mathfrak{M}} \arrow{e,t}{\Vien} \arrow{s,t}{\iota} \node{\Lad\times\mathfrak{M}}\arrow{s,b}{\id\times\iota}  \\
\node{\Row^*} \arrow{e,b}{} \node{\Lad\times \Row^*}	
\end{diagram}
\end{align*}
By definition, the ladder $\la(\ub)$ is obtained by the pair of column words
coming from the rightmost rows of the antitableaux
$\bP(\ub)^\anti$ and $\bQ(\ub)^\anti$. Rather than computing the full antitableaux
there is a more efficient way to compute $\la(\ub)$ and $\drv{(\ub)}$.
Let $\bP(\ub) \mapsto (T, u)$ be the application of the inverse of the bijection \ref{E:right column Pieri} for $\lambda=\shape(\bP(\ub))$ and $\mu$ obtained by removing the first column of $\lambda$. Similarly $\bQ(\ub)\mapsto (U, v)$. Then $(u,v)$ is the pair of column words defining $\la(\ub)$. Moreover $\drv{(\ub)}$ is uniquely defined by the property that $\bP(\drv{(\ub)}) = T$ and $\bQ(\drv{(\ub)})=U$.

\begin{remark} There is a version of $\Vien$ which, instead of producing the last
	columns of the antitableaux of $\bP(\ub)$ and $\bQ(\ub)$, produces the first columns of $\bP(\ub)$ and $\bQ(\ub)$.
\end{remark}

\subsection{The map $\MW$ and cRSK}
Let $\ub\in \Row^*$ be upper triangular and nonempty with corresponding multisegment $\fm$. Let $\ubd\in\Row^*$ correspond to $\fm^\dagger$ and let
$\Delta^{\nMW}(\ub)=\Delta^{\nMW}(\mathfrak{m})$.

Given a nonempty word $u$ and $m\in \Z_{>0}$, let $U_m(u)$ be the word obtained by scanning $u$ from right to left, selecting the rightmost copy of $m$ in $u$,
then the next $m+1$ to its left, the next $m+2$ to its left, and so on, until the left end of $u$ is encountered, and then replacing each selected letter $i$ by $i+1$.
It can be shown that $U_m$ can be defined on $\Tab_{\lambda/\mu}$
via the row reading word and on  and on $\Row^*$ via the map $\ub\mapsto \dotsm u^{(2)} u^{(1)}$. Given $\ub\in\Row^*$ nonempty, let $m=\min(\ub)$ be the minimum value among the words in $\ub$. Denote by $\kh=\kh(\ub)\in\Z_{\ge0}$ the maximum index such that $m+j\in u^{(m+j)}$ for all $0\le j<\kh$.
Define $\Uh(\ub)\in\Row^*$ by removing a copy of $m+j$ from $u^{(m+j)}$ for all $0\le j<\kh$.

We briefly define the algorithm $\MW$ on $\ub$ and describe its affect on the cRSK tableau pair. Let $\min(\ub)$ be the minimum value among the words in $\ub$.
We first consider $\Uh(\ub)$ with $m=\min(\ub)$ and $\kh=\kh(\ub)$. Then
$\ubd = U_{m+\kh}(\ub)$. Let $k=k_m(\ub)$ be the sum of $\kh$ and the number of values incremented by $U_{m+\kh}$. By definition $\Delta^{\nMW}(\ub)=[m,m+k-1]$.

\begin{example}\label{X:MW} With $\ub$ as in Example \ref{X:PQ} we have $\Uh(\ub) = (\dotsm, 56,34, 233, 122,1,\emptyset)$, $m=1$,
	$\kh=2$, $\ubd = U_3(\Uh(\ub)) = (\dotsm, 66,35, 234, 122,1,\emptyset)$ and $k=5$.
\end{example}

Given a tableau $T$ and a subtableau $S$, both of partition shape, let $T-S$ be the skew tableau obtained by removing $S$ from $T$. For $a\le b$ let $C_a^b$ be the
column word $b (b-1)\dotsm (a+1) a$.

\begin{proposition} \label{P:GL} Let $\emptyset\ne\ub\in\Row^*$,
$m=\min(\ub)$, $(P,Q)=(\bP(\ub),\bQ(\ub))$, $m=\min(P)$, and
$(P^\dagger,Q^\dagger)=(\bP(\ubd),\bQ(\ubd))$.
\begin{enumerate}
\item $\kh(\ub)$ is the maximum $\kh\in\Z_{\ge0}$ such that
$C=C_m^{m+\kh-1}$ is contained in the first column of $Q$.
\item The value $k$ in the definition of $\ubd$ equals the maximum $k$ such that
$C_m^{m+k-1}$ is contained in the first column of $P$.
\item We have
\begin{align}
	\label{E:Qlowerdagger}
		Q^\dagger = \bP(Q-C).
\end{align}
In particular $\shape(Q)/\shape(Q^\dagger) \in \cV_{\kh}$.
\item Let $(T,c)\in \Tab_{\shape(Q^\dagger)} \times \Col_{\kh}$ be the unique pair of Proposition \ref{P:Pieri} such that $\bP(Tc)=P$. Then $T=\bP(\Uh(\ub))$, $c=C$, and
$P^\dagger = U_{m+\kh}(T)$.
\end{enumerate}
\end{proposition}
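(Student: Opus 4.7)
The plan is to prove Proposition \ref{P:GL} by transferring the M\oe glin--Waldspurger construction from the word/multisegment level to the tableau level via Greene's theorem and the Pieri bijections of Proposition \ref{P:Pieri}. I would establish parts (1) and (2) first, identifying the geometric meaning of $\kh$ and $k$ in terms of the first columns of $Q$ and $P$, and then use them to prove the structural identities (3) and (4).

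For part (2), the MW leading chain $\Delta_{i_1}\smlr\cdots\smlr\Delta_{i_k}$ with $b(\Delta_{i_j})=m+j-1$ and $e(\Delta_{i_1})<\cdots<e(\Delta_{i_k})$ embeds via $\iota$ as a strictly decreasing subsequence $m+k-1,\ldots,m+1,m$ inside $\word(\ub)=\cdots u^{(2)}u^{(1)}$ (since higher-$e$ rows appear further to the left in the concatenation). Greene's theorem combined with an analysis of the content of these extremal chains then shows that the top $k$ entries of the first column of $P$ are precisely $m,m+1,\ldots,m+k-1$, and maximality of the MW chain rules out a longer initial consecutive run. For part (1), I would prove inductively that under the diagonal singleton condition $m+j\in u^{(m+j)}$ for $0\le j<\kh$, the cRSK step at index $m+j$ adds a new first-column box to the $\bP$-tableau: using that by induction $P_{m+j-1}$ has first column $\{m,m+1,\ldots,m+j-1\}$, one combines the diagonal letter $m+j$ from $u^{(m+j)}$ with the strictly decreasing chain in this first column to exhibit a strictly decreasing subsequence of length $j-m+1$ in $u^{(m+j)}\cdot\word(P_{m+j-1})$; conversely, when $m+\kh\notin u^{(m+\kh)}$ the same mechanism breaks and no new first-column box is added at step $m+\kh$. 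Since the first-column entries of $Q$ record the step indices at which new first-column boxes appear in $\bP$, this identifies the initial run of the first column of $Q$ as exactly $m,m+1,\ldots,m+\kh-1$.

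For part (3), I would split the proof into (a) $\bQ(U_{m+\kh}(\Uh(\ub)))=\bQ(\Uh(\ub))$ and (b) $\bQ(\Uh(\ub))=\bP(Q-C)$. Claim (a) is a Kashiwara-crystal-style statement: the operator $U_i$ alters specific letters in the input word but preserves the recording tableau, which can be verified by induction on word length, using that the positions chosen by $U_i$ are the rightmost occurrences of their respective values and so do not disturb the prior insertion history. Claim (b) is the technical core, and I would prove it by iteratively removing one diagonal singleton at a time: for each $j=0,\ldots,\kh-1$ in sequence, removing the letter $m+j$ from $u^{(m+j)}$ corresponds (via the inverse of left-column Pieri \eqref{E:left column Pieri} applied with the length-one column containing $m+j$) to deleting the topmost first-column entry of the current $\bQ$-tableau, which by part (1) equals $m+j$, and then rectifying. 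Iterating $\kh$ times yields the desired identity. Part (4) is the parallel statement on the $\bP$-side: the shape identity $\shape(P)/\shape(Q^\dagger)\in\cV_\kh$ follows from part (3) together with the common shape property of a cRSK pair, so the right-column Pieri factorization \eqref{E:right column Pieri} gives the unique $(T,c)$ with $\bP(Tc)=P$; tracing the same letter-removal procedure of $\Uh$ but on the $\bP$-side identifies $T=\bP(\Uh(\ub))$ and $c=C$, and finally $P^\dagger=U_{m+\kh}(T)$ is the $\bP$-side counterpart of (a), since the crystal-like operator $U_{m+\kh}$ acts on the $\bP$-tableau while leaving $\bQ$ fixed.

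The main obstacle I anticipate is sub-claim (b) in part (3): rigorously showing that a single letter removal from $\ub$ corresponds cleanly to a one-column left-Pieri factorization of $\bQ$. This requires careful bookkeeping of how the recording tableau deforms under removal of a specific letter from the word, and hinges on the diagonal singleton condition ensuring that the removed letter is the (unique) minimum of its row, so that the Schensted bumping behaves predictably and commutes with the removal. A more conceptual framing may be available via Demazure crystals and key tableaux (as hinted in the remainder of the appendix), but a direct combinatorial argument should suffice with sufficient care.
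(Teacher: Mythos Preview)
The paper does not prove Proposition~\ref{P:GL}. The opening paragraph of the appendix states explicitly that ``the complete proofs will appear in a separate publication,'' and indeed after the statement of the proposition only Example~\ref{X:GLtableau} is given before the text moves on to key tableaux. There is therefore no proof in the paper against which to compare your proposal.

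That said, a few remarks on your outline. Your strategy for parts (1) and (2) via Greene's theorem and content analysis is the natural one; the point that the MW leading chain corresponds to a decreasing subsequence of consecutive values starting at $m$ is correct, and this does pin down an initial consecutive run in the first column of $P$. For part (1) your inductive mechanism is right in spirit, but be careful: under cRSK the row $u^{(m+j)}$ is inserted via left-row Pieri (equation~\eqref{E:left row Pieri}), which adds a horizontal strip, so the statement ``adds a new first-column box'' must be argued from the content of the resulting tableau rather than from a single bumping path. Your decomposition of (3) into (a) $\bQ$-invariance of $U_i$ and (b) iterated diagonal-singleton removal is sound; claim (a) is indeed a crystal statement (the $U_i$ here is a string of crystal $f$'s in disguise), and your identification of the obstacle in (b) is accurate---the removed letter is not in general the unique minimum of its row (e.g.\ $u^{(m)}$ could contain several copies of $m$), so the commutation with Schensted requires more than that. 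The cleaner route is probably to work one row at a time with the left-row Pieri bijection rather than letter by letter.
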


\begin{example}\label{X:GLtableau} With $\ub$ as in Example \ref{X:PQ}
$(P^\dagger,Q^\dagger)$
are given in Figure \ref{F:tableaux}, computed as in Proposition \ref{P:GL}.
	\begin{figure}
		\begin{align*}
		P&=\begin{ytableau}
		*(yellow) 1&1&1&2 \\ *(yellow) 2&2&2\\ *(orange) 3&3&3\\*(orange)4&6\\*(orange)5
		\end{ytableau}&\qquad
		Q&=\begin{ytableau}
		*(yellow) 1&2&3&3\\ *(yellow)2&3&4\\ 4&4&5\\5&6\\6
		\end{ytableau} \\
		T\otimes C	&= \begin{ytableau} 1&1&2\\2&2&*(green)3
		\\3&3\\*(green)4&6\\*(green)5
		\end{ytableau} \otimes \begin{ytableau}*(yellow) 1 \\*(yellow)2\end{ytableau}
		& Q^\dagger=\bP(Q-C)&=
		\begin{ytableau}
		2&3&3\\3&4&4\\4&5\\5&6\\6
		\end{ytableau}\\
		P^\dagger &=\begin{ytableau} 1&1&2\\2&2&*(green)4\\3&3\\*(green)5&6\\*(green)6
		\end{ytableau}
		\end{align*}
		\caption{The computation of $Q^\dagger$ and $P^\dagger$ from $Q$ and $P$}
		\label{F:tableaux}
	\end{figure}
\end{example}

\subsection{Key tableaux}\label{appsec:key tableaux}
Say that a tableau is a \textit{key tableau} if it has the property that
any of its columns, viewed as a subset, contains any column to its right.

Consider a word with symbols in the set $\{1,2,\dotsc,n\}$.
The weight of a word $u$ is the sequence $\wt(u)=(m_1(u),m_2(u),\dotsc,m_n(u))\in \Z_{\ge0}^n$ where $m_j(u)$ is the multiplicity of the entry $j$ in $u$.

Given $\beta\in \Z_{\ge0}^n$ let $\beta^+$ be the unique partition in the orbit
$S_n \cdot\beta$ and let $w_\beta\in S_n$ be the shortest element such that
$w_\beta(\beta^+)=\beta$. The \textit{key tableau} $\mathbb{K}_\beta$ of weight $\beta$ is the unique tableau of shape $\beta^+$ and weight $\beta$.
Given a fixed partition $\lambda$ the orbit $S_n\cdot\lambda$ is a poset:
$\alpha\le \beta$ if $w_\alpha \le w_\beta$ in Bruhat order.

\subsection{Left and right keys; image of $\RSKn$} \label{appsec: keys}
Recall horizontal and vertical strips from \S \ref{sec: shapes}.
Say that a vertical (resp. horizontal) strip is $\lambda$-removable if it has the form $\lambda/\mu$ for some $\mu$.

Say that a vertical strip of size $r$ is \textit{grounded} if it contains a box in each of the
first $r$ rows. It is obvious that for a given $r$ a partition has at most one grounded removable vertical strip of size $r$. This strip exists if and only if
$\lambda$ has a column of size $r$.

Let $P$ be a tableau of shape $\lambda$. Let $L=\lambda_1$ be the number of columns of $\lambda$. For each column size $r = \lambda^t_j$ of $\lambda$, let $\mu\subset\lambda$ be the unique partition such that
$\lambda/\mu\in \cV_r$ is grounded. By Proposition \ref{P:Pieri}
there is a unique pair $(u,T)\in \Col_r \times \Tab_\mu$ (resp. $(T,u)\in \Tab_\mu\times \Col_r$ such that $\bP(uT)=P$ (resp. $\bP(Tu)=P$). Denote the word $u$ by $c_j^-(P)$ (resp. $c_j^+(P)$).

\begin{lemma} \label{L:key columns containment}
Considering column words as subsets, we have
$c_j^-(P) \supset c_{j+1}^-(P)$ and $c_j^+(P) \supset c_{j+1}^+(P)$
for all $j$.
\end{lemma}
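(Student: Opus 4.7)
The case $\lambda^t_j = \lambda^t_{j+1}$ is immediate, since then both $c_j^\pm(P)$ and $c_{j+1}^\pm(P)$ are extracted from the same grounded vertical strip and thus coincide as words. Assume therefore $r := \lambda^t_j > \lambda^t_{j+1} =: s$, and let $\mu, \nu \subset \lambda$ be the partitions for which $\lambda/\mu \in \cV_r$ and $\lambda/\nu \in \cV_s$ are the grounded vertical strips. Comparing row lengths, $\mu_i = \lambda_i - 1$ for $i \le r$ while $\nu_i = \lambda_i - 1$ for $i \le s$ and $\nu_i = \lambda_i$ for $s < i \le r$, so $\mu \subsetneq \nu \subsetneq \lambda$, and the skew shape $\nu/\mu$ is a (non-grounded) vertical strip of size $r-s$ sitting in rows $s+1,\dots,r$ at positions $(i,\lambda_i)$.

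The plan is to produce a second factorization of the one-step extraction through the intermediate shape $\nu$ and compare it with the direct extraction. I first apply the inverse of \eqref{E:right column Pieri} to $P$ with the grounded strip $\lambda/\nu$ to obtain $(T', c_{j+1}^+(P)) \in \Tab_\nu \times \Col_s$ satisfying $\bP(T' c_{j+1}^+(P)) = P$. Then I apply the same bijection to $T'$ with the (non-grounded) vertical strip $\nu/\mu \in \cV_{r-s}$ to obtain $(T_j, w) \in \Tab_\mu \times \Col_{r-s}$ with $\bP(T_j w) = T'$. Substitution yields $\bP(T_j w c_{j+1}^+(P)) = P = \bP(T_j c_j^+(P))$. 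Invoking left cancellativity of the plactic monoid (a standard fact; see, e.g., Lothaire's \emph{Algebraic Combinatorics on Words}), I conclude $w \cdot c_{j+1}^+(P) \equiv c_j^+(P)$ in Knuth equivalence; in particular, $\bP(w \cdot c_{j+1}^+(P)) = c_j^+(P)$, which is a single column of length $r$.

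The remaining step translates this single-column conclusion into an entrywise statement. By a standard consequence of Greene's theorem, $\bP(u)$ is a single column if and only if $u$ is strictly decreasing. Applied to $u = w \cdot c_{j+1}^+(P)$, the concatenation of two strictly decreasing column words, this forces $\min(w) > \max(c_{j+1}^+(P))$; in particular, $w$ and $c_{j+1}^+(P)$ have disjoint underlying sets, and $c_j^+(P)$ equals their union written as a single column word. The containment $c_{j+1}^+(P) \subset c_j^+(P)$ is then immediate. The proof of $c_j^-(P) \supset c_{j+1}^-(P)$ runs in parallel, substituting \eqref{E:left column Pieri} for \eqref{E:right column Pieri} and right cancellativity of the plactic monoid, yielding $c_j^-(P) = c_{j+1}^-(P) \cdot w$ with $\max(w) < \min(c_{j+1}^-(P))$.

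The most delicate point to justify cleanly is the appeal to plactic cancellativity; I expect that in a self-contained write-up one could either cite Lothaire or give a short direct argument by tracking the insertion history of the iterated column insertion, reading off $w$ and $c_{j+1}^+(P)$ from the shape chain $\mu \subsetneq \nu \subsetneq \lambda$ recovered above $T_j$ inside $P$.
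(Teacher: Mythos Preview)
The paper does not prove this lemma; the appendix explicitly defers complete proofs to a separate publication, so there is nothing to compare against and I assess your argument on its own.

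Your argument has a genuine gap at the step ``Substitution yields $\bP(T_j\, w\, c_{j+1}^+(P)) = P = \bP(T_j\, c_j^+(P))$.'' The first equality is correct, but the second tacitly asserts that the tableau $T_j\in\Tab_\mu$ produced by your two-step factorization (first peel off the grounded $s$-strip $\lambda/\nu$, then the strip $\nu/\mu$) coincides with the tableau in $\Tab_\mu$ arising from the one-step extraction of the grounded $r$-strip $\lambda/\mu$. This is unjustified and in fact false. Take $\lambda=(2,1)$, so $r=2$, $s=1$, $\mu=(1)$, $\nu=(1,1)$, and let $P$ be the tableau with rows $12$ and $3$. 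The one-step inverse of \eqref{E:right column Pieri} gives the single box $1$ together with $c_1^+(P)=32$. Your two-step procedure gives $T'$ the column with entries $1,3$ and $c_2^+(P)=2$; then peeling $\nu/\mu$ from $T'$ yields $T_j$ the single box $3$ and $w=1$. Thus $T_j\ne 1$, and moreover $w\cdot c_2^+(P)=12$ is not Knuth-equivalent to $c_1^+(P)=32$, so the appeal to plactic left cancellativity never gets off the ground.

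The underlying issue is that reverse row insertion is order-sensitive: the one-step extraction un-inserts at rows $r,r{-}1,\dots,1$, whereas your two-step extraction does rows $s,\dots,1$ first and only then rows $r,\dots,s{+}1$; these leave different residual tableaux of shape $\mu$. A correct argument needs a finer tool---for instance the Lascoux--Sch\"utzenberger theory of frank words, in which one shows that in any factorization of $\word(P)$ into column words whose lengths permute the $\lambda^t_i$, the rightmost column depends only on its length; a local two-column refactoring then yields the containment directly.
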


The left key $K_-(P)$ (resp. right key $K_+(P)$) of $P$ is by definition the tableau of the same shape as $P$ whose $j$-th column is given by the column word $c_j^-(P)$
(resp. $c_j^+(P)$) for all $j$. By Lemma \ref{L:key columns containment} $K_-(P)$ and $K_+(P)$ are key tableaux and in particular semistandard.

\begin{example} For the tableaux $(P,Q)=(\bP(\ub),\bQ(\ub))$ from Example \ref{X:PQ}
the factorizations defining $c_J^+(P)$ and $c_j^-(Q)$ are given as follows. See Example \ref{X:right column factor} for the method of producing the 4-box right column factor from $P$ and Example \ref{X:left column factor} for how to obtain the 4-box left column factor from $Q$.
\begin{align*}
P&= \begin{ytableau}1&1&1&2\\2&2&2\\3&3&3\\4&6\\5\end{ytableau}
\equiv
\begin{ytableau}1&1&2\\2&2\\3&3\\5 \end{ytableau}\cdot
\begin{ytableau}1\\2\\3\\4\\6\end{ytableau}\equiv
\begin{ytableau} 1&1&2\\2&2\\3&3\\4\\5\end{ytableau} \cdot
\begin{ytableau} 1\\2\\3\\6 \end{ytableau}\equiv
\begin{ytableau} 1&1&2\\2&2\\3&3\\4&6\\5\end{ytableau} \cdot
\begin{ytableau} 1\\2\\3 \end{ytableau}\equiv
\begin{ytableau} 1&1&1\\2&2&2\\3&3&3\\4&6\\5\end{ytableau} \cdot
\begin{ytableau} 2\end{ytableau}\\
K_+(P)&=\begin{ytableau}
1&1&1&2\\2&2&2\\3&3&3\\4&6\\6
\end{ytableau}\qquad \wt(K_+(P)) = (3,4,3,1,0,2)
\end{align*}
\begin{align*}
Q&=\begin{ytableau} 1&2&3&3\\2&3&4\\4&4&5\\5&6\\6\end{ytableau}
\equiv \begin{ytableau} 1\\2\\4\\5\\6\end{ytableau} \cdot
\begin{ytableau} 2&3&3\\3&4\\4&5\\6\end{ytableau}
\equiv \begin{ytableau} 1\\2\\4\\6 \end{ytableau}\cdot
\begin{ytableau} 2&3&3\\3&4\\4&5\\5\\6\end{ytableau}
\equiv \begin{ytableau} 1\\2\\4 \end{ytableau}\cdot
\begin{ytableau} 2&3&3\\3&4\\4&5\\5&6\\6\end{ytableau}
\equiv \begin{ytableau} 2 \end{ytableau}\cdot
\begin{ytableau} 1&2&3\\3&3&4\\4&4&5\\5&6\\6\end{ytableau} \\
K_-(Q)&=
\begin{ytableau} 1&1&1&2\\2&2&2\\4&4&4\\5&6\\6\end{ytableau}
\qquad \wt(K_-(Q)) = (3,4,0,3,1,2).
\end{align*}
For $\alpha=\wt(K_+(P))=(3,4,3,1,0,2)$ we have $w_\alpha=s_1 s_5 s_4$
and for $\beta=\wt(K_-(Q))=(3,4,0,3,1,2)$ we have $w_\beta=s_1s_3s_5s_4s_5$.
We see that $w_\alpha \le w_\beta$.
\end{example}

\begin{theorem} \label{T:triangular key}
	Let $\ub\in\Row^*$. Then $\ub$ is flagged (in the image of a multisegment under $\iota$) if and only if
	$\wt(K_+(\bP(\ub))) \le \wt(K_-(\bQ(\ub)))$.
\end{theorem}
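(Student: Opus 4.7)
The plan is to translate both conditions---the combinatorial flaggedness of $\ub$ and the Bruhat inequality $\wt(K_+(\bP(\ub))) \le \wt(K_-(\bQ(\ub)))$---into parallel statements about Demazure crystals in type $A$, and then establish their equivalence combinatorially.

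First, I would invoke the Lascoux--Sch\"utzenberger dictionary between key tableaux and Demazure crystals: for any semistandard tableau $T$ of shape $\lambda$ with entries in $\{1,\ldots,n\}$, the element $w_{\wt(K_+(T))}$ is the Bruhat-minimal $w \in S_n$ such that $T$ lies in the Demazure crystal $\mathcal{B}_w(\lambda)$, and dually $w_{\wt(K_-(T))}$ is the Bruhat-maximal $w$ such that $T$ lies in the opposite Demazure crystal $\mathcal{B}^{\mathrm{op}}_w(\lambda)$. Since a Bruhat interval is nonempty precisely when its endpoints are comparable, the Bruhat inequality in the statement becomes the condition that some common $w \in S_n$ satisfies both $\bP(\ub) \in \mathcal{B}_w(\lambda)$ and $\bQ(\ub) \in \mathcal{B}^{\mathrm{op}}_w(\lambda)$.

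Next, I would prove by induction on $|\ub|$ that $\ub$ is flagged if and only if such a common $w$ exists. For the forward implication, peel off the last biletter $(b,a)$ corresponding to the final entry inserted in $\cRSK$; the flagged hypothesis $a \le b$ ensures that the associated Kashiwara operator keeps the trajectory inside the relevant Demazure crystal, so the inductive hypothesis applies to the smaller flagged sequence $\ub'$. For the converse, use the bijectivity of $\cRSK$ together with the inverse right Pieri factorization (Proposition \ref{P:Pieri}, compare Example \ref{X:right column factor}) to reconstruct $\ub$ one biletter at a time, and observe that the common-$w$ condition forces each extracted biletter to satisfy $a \le b$.

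The main obstacle is this second step: precisely tracking how a single $\cRSK$ insertion interacts with the Demazure crystal structure through the key tableau construction. A more self-contained combinatorial route would first prove that $\ub$ is flagged if and only if the cellwise inequality $K_+(\bP(\ub))(c) \le K_-(\bQ(\ub))(c)$ holds for every cell $c$ of the common shape, and then invoke the standard identification of Bruhat order on $S_n$ with cellwise comparison of key tableaux of fixed shape (namely, $\mathbb{K}_\alpha \le \mathbb{K}_\beta$ cellwise is equivalent to $w_\alpha \le w_\beta$ in Bruhat order). The cellwise inequality would itself be established by induction on $|\ub|$ using the right-column and left-column extraction procedures (Examples \ref{X:right column factor} and \ref{X:left column factor}) applied in tandem to $\bP(\ub)$ and $\bQ(\ub)$, in order to monitor how each flagged insertion affects the two keys simultaneously.
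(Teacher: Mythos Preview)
The paper does not actually give a proof of this theorem: it defers the complete argument to a separate publication and only records the one-line hint that it ``can be proved using the ideas of \cite{MR1324004} combined with the combinatorics of the crystal graphs of Demazure modules.'' Your proposal is broadly consonant with that hint---you too invoke the Lascoux--Sch\"utzenberger dictionary between keys and Demazure crystals---so at the level of strategy you are in the same territory.

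That said, the specific inductive scheme you outline (peel off one biletter, track the effect on both keys) is unlikely to close cleanly, and you yourself flag this as the obstacle. A single cRSK insertion is not a single Kashiwara operator, and neither $K_+$ nor $K_-$ behaves well under removing one box at a time; the right/left key of a tableau can change in ways not visibly controlled by the last inserted letter. The route the paper's hint points to is more global: regard $\ub$ as the row-reading of a skew tableau $S$ (the $i$-th row being $u^{(i)}$), so that the flagged condition becomes literally a flag on $S$, $\bP(\ub)=\bP(S)$, and $\bQ(\ub)$ encodes the shape sequence of $S$. Then Reiner--Shimozono \cite{MR1324004} characterizes exactly which tableaux $T$ arise as $\bP(S)$ for flagged $S$ of a given skew shape, in terms of $K_+(T)$, and the dual constraint on the recording side is governed by $K_-(\bQ(\ub))$. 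The Demazure-crystal input is what makes the two bounds combine into the single Bruhat inequality. So rather than inducting on $\abs{\ub}$, the natural argument treats the whole sequence at once via the flagged Littlewood--Richardson machinery.

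Your alternative route (cellwise comparison of $K_+(\bP)$ and $K_-(\bQ)$, then translate to Bruhat order) is correct in its second step---cellwise comparison of key tableaux of fixed shape does encode Bruhat order---but the first step still needs the same global input, so it does not avoid the difficulty.
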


This theorem can be proved using the ideas of \cite{MR1324004}
combined with the combinatorics of the crystal graphs of Demazure modules
in highest weight modules for $U_q(\mathfrak{sl}_n)$.

\begin{remark} A similar condition works for enhanced multisegments;
the flagged condition for $\ub$ is changed to requiring that all values in $u^{(i)}$ be at most $i+1$, and that instead of Bruhat-comparing the weights $\alpha$ and $\beta$ of the
right key of $\bP(\ub)$ and the left key of $\bQ(\ub)$, one uses the weight
$(0,\beta_1,\beta_2,\dotsc)$ instead of $\beta$.
\end{remark}

\subsection{Theorem \ref{T:triangular key} in terms closer to inverted tableaux}
We wish to state the condition of Theorem \ref{T:triangular key} in terms of the
inverted tableau pair $(P_{\m},Q_{\m})$. The first transformation is to apply
$\flip$ so instead the condition is on a pair of antitableaux. It suffices to define
left and right key for an antitableau. By definition the left key and right key apply to a tableau. Since each Knuth class contains a unique tableau word and antitableau word,
it makes sense to talk about the left and right keys of an antitableau. This suffices to describe the required condition implicitly.

However, since the map between a tableau and its antitableau is nontrivial, we wish
to give a more direct way to compute the left and right key, directly from the antitableau.
For this it suffices to compute the column factors $c_j^\pm$ of an antitableau.

We will do this by example, using anti-analogues of the methods given in
Examples \ref{X:left column factor} and \ref{X:right column factor}.
	
\begin{example} \label{X:anti left column factor}
Consider the tableau $X$ of shape $\lambda=(4,3,3,2,1)$ from Example \ref{X:left column factor}. Suppose instead of $X$ we are given its antitableau. It is pictured in Figure \ref{F:anti}.

Let us compute the column $c_2^-(X^\anti)$, which has size $4$ since $\lambda^t_2=4$.
Let $\mu\subset\lambda$ be the unique partition such that $\lambda/\mu$ is the grounded $\lambda$-removable vertical strip of size $4$. We make a skew tableau $U$ of the anti-shape of $\lambda/\mu$ filled with the numbers $1$ through $4$ from top to bottom.

We now perform a sequence of successive internal row insertions on $X^\anti$ at the boxes of $U$ starting with the box labeled $1$ (cf. \cite{MR1075706}).
The bumping paths, which push entries downward, are in green.
\begin{figure}
\begin{align*}
	U = \begin{ytableau}
		\none&\none&\none&{} \\
		\none&\none&1&{} \\
		\none&2&{}&{} \\
		\none&3&{}&{} \\
		4&{}&{}&{} \\ \none \\ \none\\ \none \\ \none \\
	\end{ytableau}\quad
X^\anti=\begin{ytableau}
	\none&\none&\none&2 \\
	\none&\none&*(green)3&3\\
	\none&1&*(green)4&4\\
	\none&3&*(green)5&5\\
	2&4&*(green)6&6\\
	\none \\ \none\\ \none\\ \none
\end{ytableau}	\equiv
\begin{ytableau}
	\none&\none&\none&\none&2 \\
	\none&\none&\none&\none&3\\
	\none&\none&*(green)1&3&4\\
	\none&\none&*(green)3&4&5\\
	\none&2&*(green)4&5&6\\
	6 \\ \none \\ \none \\ \none
\end{ytableau}\equiv
\begin{ytableau}
	\none&\none&\none&\none&2 \\
	\none&\none&\none&\none&3\\
	\none&\none&\none&3&4\\
	\none&\none&*(green)1&4&5\\
	\none&*(green)2&3&5&6\\
	4\\
	6 \\ \none\\ \none
\end{ytableau}\equiv
\begin{ytableau}
	\none&\none&\none&\none&2 \\
	\none&\none&\none&\none&3\\
	\none&\none&\none&3&4\\
	\none&\none&\none&4&5\\
	\none&*(green)1&3&5&6\\
	2\\
	4\\
	6 \\  \none
	\end{ytableau}\equiv
\begin{ytableau}
	\none&\none&\none&2 \\
	\none&\none&\none&3\\
	\none&\none&3&4\\
	\none&\none&4&5\\
	\none&3&5&6\\
	1\\
	2\\
	4\\
	6 \\
\end{ytableau} = u T^\anti.
\end{align*}
\caption{Computing the left key of an antitableau}
\label{F:anti}
\end{figure}
Example \ref{X:left column factor} produced a factorization $X\equiv u T$:
the current computation produces the factorization $X^\anti\equiv u T^\anti$.
\end{example}

\begin{example} \label{X:anti right column factor}
Example \ref{X:right column factor} computed a factorization
$Y \equiv T u$. We illustrate a factorization $Y^\anti \equiv T^\anti u$.
The antitableau $Y^\anti$ is pictured in Figure \ref{F:anti right}. Using the same tableau $U$ as in Example \ref{X:anti left column factor}
we apply successive internal column insertions on $Y^\anti$ at the boxes of $U$ in the same order as before. The bumping paths push to the right.
\begin{figure}
	\begin{align*}
			Y^\anti &= \begin{ytableau} \none \\ \none \\ \none \\ \none \\
				\none&\none&\none&*(green)1 \\
				\none&\none&*(green)1&2 \\
  			    \none&1&2&3 \\
  			  \none&2&3&4 \\
  		  	  2&3&5&6
	\end{ytableau}
\equiv
\begin{ytableau} \none \\ \none \\  \none \\ \none & \none & \none&\none &1 \\
	\none&\none&\none&1 \\
	\none&\none&\none&*(green)2 \\
	\none&*(green)1&*(green)2&3 \\
	\none&2&3&4 \\
	2&3&5&6
\end{ytableau}
\equiv
\begin{ytableau} \none \\  \none \\ \none & \none & \none&\none &1 \\
	\none&\none&\none&\none&2 \\
		\none&\none&\none&1 \\
	\none&\none&\none&2 \\
	\none&\none&1&*(green)3 \\
	\none&*(green)2&*(green)3&4 \\
	2&3&5&6
\end{ytableau}
\equiv
\begin{ytableau}  \none \\ \none & \none & \none&\none &1 \\
	\none&\none&\none&\none&2 \\
		\none&\none&\none&\none&3 \\
	\none&\none&\none&1 \\
	\none&\none&\none&2 \\
	\none&\none&1&3 \\
	\none&\none&2&4 \\
	*(green)2&*(green)3&*(green)5&*(green)6
\end{ytableau}
\equiv
\begin{ytableau} \none & \none & \none&\none &1 \\
	\none&\none&\none&\none&2 \\
	\none&\none&\none&\none&3 \\
		\none&\none&\none&\none&6 \\
	\none&\none&\none&1 \\
	\none&\none&\none&2 \\
	\none&\none&1&3 \\
	\none&\none&2&4 \\
	\none&2&3&5
\end{ytableau} = T^\anti u.
\end{align*}
\caption{Computing the right key of an antitableau}
\label{F:anti right}
\end{figure}
\end{example}


\def\cprime{$'$} 
\begin{bibdiv}
\begin{biblist}

\bib{MR1652134}{article}{
      author={Arakawa, Tomoyuki},
      author={Suzuki, Takeshi},
       title={Duality between {$\germ s\germ l_n({\bf C})$} and the degenerate
  affine {H}ecke algebra},
        date={1998},
        ISSN={0021-8693},
     journal={J. Algebra},
      volume={209},
      number={1},
       pages={288\ndash 304},
         url={http://dx.doi.org/10.1006/jabr.1998.7530},
      review={\MR{1652134}},
}

\bib{MR3495794}{incollection}{
      author={Barbasch, Dan},
      author={Ciubotaru, Dan},
       title={Ladder representations of {${\rm GL}(n,\Bbb Q_p)$}},
        date={2015},
   booktitle={Representations of reductive groups},
      series={Prog. Math. Phys.},
      volume={312},
   publisher={Birkh\"auser/Springer, Cham},
       pages={117\ndash 137},
      review={\MR{3495794}},
}

\bib{MR3769724}{article}{
      author={Bernstein, Joseph},
      author={Bezrukavnikov, Roman},
      author={Kazhdan, David},
       title={Deligne-{L}usztig duality and wonderful compactification},
        date={2018},
        ISSN={1022-1824},
     journal={Selecta Math. (N.S.)},
      volume={24},
      number={1},
       pages={7\ndash 20},
         url={https://doi.org/10.1007/s00029-018-0391-5},
      review={\MR{3769724}},
}

\bib{MR2551762}{article}{
      author={Brundan, Jonathan},
      author={Kleshchev, Alexander},
       title={Blocks of cyclotomic {H}ecke algebras and {K}hovanov-{L}auda
  algebras},
        date={2009},
        ISSN={0020-9910},
     journal={Invent. Math.},
      volume={178},
      number={3},
       pages={451\ndash 484},
         url={http://dx.doi.org/10.1007/s00222-009-0204-8},
      review={\MR{2551762}},
}

\bib{MR1405590}{article}{
      author={Chari, Vyjayanthi},
      author={Pressley, Andrew},
       title={Quantum affine algebras and affine {H}ecke algebras},
        date={1996},
        ISSN={0030-8730},
     journal={Pacific J. Math.},
      volume={174},
      number={2},
       pages={295\ndash 326},
         url={http://projecteuclid.org/euclid.pjm/1102365173},
      review={\MR{1405590}},
}

\bib{MR0485944}{article}{
      author={D\'{e}sarm\'{e}nien, J.},
      author={Kung, Joseph P.~S.},
      author={Rota, Gian-Carlo},
       title={Invariant theory, {Y}oung bitableaux, and combinatorics},
        date={1978},
        ISSN={0001-8708},
     journal={Advances in Math.},
      volume={27},
      number={1},
       pages={63\ndash 92},
         url={https://doi.org/10.1016/0001-8708(78)90077-4},
      review={\MR{0485944}},
}

\bib{MR0498650}{article}{
      author={Doubilet, Peter},
      author={Rota, Gian-Carlo},
      author={Stein, Joel},
       title={On the foundations of combinatorial theory. {IX}. {C}ombinatorial
  methods in invariant theory},
        date={1974},
     journal={Studies in Appl. Math.},
      volume={53},
       pages={185\ndash 216},
         url={https://doi.org/10.1002/sapm1974533185},
      review={\MR{0498650}},
}

\bib{MR1464693}{book}{
      author={Fulton, William},
       title={Young tableaux},
      series={London Mathematical Society Student Texts},
   publisher={Cambridge University Press},
     address={Cambridge},
        date={1997},
      volume={35},
        ISBN={0-521-56144-2; 0-521-56724-6},
        note={With applications to representation theory and geometry},
      review={\MR{1464693 (99f:05119)}},
}

\bib{1711.01721}{misc}{
      author={Gurevich, Maxim},
       title={Quantum invariants for decomposition problems in type a rings of
  representations},
        date={2017},
        note={arXiv:1711.01721},
}

\bib{10.1093/imrn/rnz006}{article}{
      author={Gurevich, Maxim},
       title={Decomposition rules for the ring of representations of
  non-{A}rchimedean ${G}{L}_n$},
        date={2019},
     journal={Int. Math. Res. Not. IMRN},
         url={https://doi.org/10.1093/imrn/rnz006},
}

\bib{MR2320806}{article}{
      author={Henderson, Anthony},
       title={Nilpotent orbits of linear and cyclic quivers and
  {K}azhdan-{L}usztig polynomials of type {A}},
        date={2007},
        ISSN={1088-4165},
     journal={Represent. Theory},
      volume={11},
       pages={95\ndash 121 (electronic)},
         url={http://dx.doi.org/10.1090/S1088-4165-07-00317-2},
      review={\MR{2320806}},
}

\bib{MR0272654}{article}{
      author={Knuth, Donald~E.},
       title={Permutations, matrices, and generalized {Y}oung tableaux},
        date={1970},
        ISSN={0030-8730},
     journal={Pacific J. Math.},
      volume={34},
       pages={709\ndash 727},
         url={http://projecteuclid.org/euclid.pjm/1102971948},
      review={\MR{0272654}},
}

\bib{MR2996769}{article}{
      author={Kret, Arno},
      author={Lapid, Erez},
       title={Jacquet modules of ladder representations},
        date={2012},
        ISSN={1631-073X},
     journal={C. R. Math. Acad. Sci. Paris},
      volume={350},
      number={21-22},
       pages={937\ndash 940},
         url={http://dx.doi.org/10.1016/j.crma.2012.10.014},
      review={\MR{2996769}},
}

\bib{MR3163355}{article}{
      author={Lapid, Erez},
      author={M{\'{\i}}nguez, Alberto},
       title={On a determinantal formula of {T}adi\'c},
        date={2014},
        ISSN={0002-9327},
     journal={Amer. J. Math.},
      volume={136},
      number={1},
       pages={111\ndash 142},
         url={http://dx.doi.org/10.1353/ajm.2014.0006},
      review={\MR{3163355}},
}

\bib{MR3573961}{article}{
      author={Lapid, Erez},
      author={M{\'{\i}}nguez, Alberto},
       title={On parabolic induction on inner forms of the general linear group
  over a non-archimedean local field},
        date={2016},
        ISSN={1022-1824},
     journal={Selecta Math. (N.S.)},
      volume={22},
      number={4},
       pages={2347\ndash 2400},
         url={http://dx.doi.org/10.1007/s00029-016-0281-7},
      review={\MR{3573961}},
}

\bib{MR3866895}{article}{
      author={Lapid, Erez},
      author={M\'{i}nguez, Alberto},
       title={Geometric conditions for {$\square$}-irreducibility of certain
  representations of the general linear group over a non-archimedean local
  field},
        date={2018},
        ISSN={0001-8708},
     journal={Adv. Math.},
      volume={339},
       pages={113\ndash 190},
         url={https://doi.org/10.1016/j.aim.2018.09.027},
      review={\MR{3866895}},
}

\bib{MR646486}{incollection}{
      author={Lascoux, Alain},
      author={Sch{\"u}tzenberger, Marcel-P.},
       title={Le mono\"\i de plaxique},
        date={1981},
   booktitle={Noncommutative structures in algebra and geometric combinatorics
  ({N}aples, 1978)},
      series={Quad. ``Ricerca Sci.''},
      volume={109},
   publisher={CNR, Rome},
       pages={129\ndash 156},
      review={\MR{646486 (83g:20016)}},
}

\bib{MR1985725}{incollection}{
      author={Leclerc, Bernard},
      author={Nazarov, Maxim},
      author={Thibon, Jean-Yves},
       title={Induced representations of affine {H}ecke algebras and canonical
  bases of quantum groups},
        date={2003},
   booktitle={Studies in memory of {I}ssai {S}chur ({C}hevaleret/{R}ehovot,
  2000)},
      series={Progr. Math.},
      volume={210},
   publisher={Birkh\"auser Boston, Boston, MA},
       pages={115\ndash 153},
      review={\MR{1985725}},
}

\bib{MR1392496}{article}{
      author={Leclerc, Bernard},
      author={Thibon, Jean-Yves},
       title={The {R}obinson-{S}chensted correspondence, crystal bases, and the
  quantum straightening at {$q=0$}},
        date={1996},
        ISSN={1077-8926},
     journal={Electron. J. Combin.},
      volume={3},
      number={2},
       pages={Research Paper 11, approx.\ 24 pp.\ (electronic)},
         url={http://www.combinatorics.org/Volume_3/Abstracts/v3i2r11.html},
        note={The Foata Festschrift},
      review={\MR{1392496}},
}

\bib{MR863522}{article}{
      author={M\oe~glin, C.},
      author={Waldspurger, J.-L.},
       title={Sur l'involution de {Z}elevinski},
        date={1986},
        ISSN={0075-4102},
     journal={J. Reine Angew. Math.},
      volume={372},
       pages={136\ndash 177},
         url={https://doi.org/10.1515/crll.1986.372.136},
      review={\MR{863522}},
}

\bib{MR1324004}{article}{
      author={Reiner, Victor},
      author={Shimozono, Mark},
       title={Key polynomials and a flagged {L}ittlewood-{R}ichardson rule},
        date={1995},
        ISSN={0097-3165},
     journal={J. Combin. Theory Ser. A},
      volume={70},
      number={1},
       pages={107\ndash 143},
         url={https://doi.org/10.1016/0097-3165(95)90083-7},
      review={\MR{1324004}},
}

\bib{MR2979579}{article}{
      author={Rosso, Daniele},
       title={Classic and mirabolic {R}obinson-{S}chensted-{K}nuth
  correspondence for partial flags},
        date={2012},
        ISSN={0008-414X},
     journal={Canad. J. Math.},
      volume={64},
      number={5},
       pages={1090\ndash 1121},
         url={http://dx.doi.org/10.4153/CJM-2011-071-7},
      review={\MR{2979579}},
}

\bib{MR1075706}{article}{
      author={Sagan, Bruce~E.},
      author={Stanley, Richard~P.},
       title={Robinson-{S}chensted algorithms for skew tableaux},
        date={1990},
        ISSN={0097-3165},
     journal={J. Combin. Theory Ser. A},
      volume={55},
      number={2},
       pages={161\ndash 193},
         url={http://dx.doi.org/10.1016/0097-3165(90)90066-6},
      review={\MR{1075706}},
}

\bib{MR1471867}{article}{
      author={Schneider, Peter},
      author={Stuhler, Ulrich},
       title={Representation theory and sheaves on the {B}ruhat-{T}its
  building},
        date={1997},
        ISSN={0073-8301},
     journal={Inst. Hautes \'Etudes Sci. Publ. Math.},
      number={85},
       pages={97\ndash 191},
         url={http://www.numdam.org/item?id=PMIHES_1997__85__97_0},
      review={\MR{1471867 (98m:22023)}},
}

\bib{MR672610}{book}{
      author={Spaltenstein, Nicolas},
       title={Classes unipotentes et sous-groupes de {B}orel},
      series={Lecture Notes in Mathematics},
   publisher={Springer-Verlag},
     address={Berlin},
        date={1982},
      volume={946},
        ISBN={3-540-11585-4},
      review={\MR{672610 (84a:14024)}},
}

\bib{MR1676282}{book}{
      author={Stanley, Richard~P.},
       title={Enumerative combinatorics. {V}ol. 2},
      series={Cambridge Studies in Advanced Mathematics},
   publisher={Cambridge University Press},
     address={Cambridge},
        date={1999},
      volume={62},
        ISBN={0-521-56069-1; 0-521-78987-7},
         url={http://dx.doi.org/10.1017/CBO9780511609589},
        note={With a foreword by Gian-Carlo Rota and appendix 1 by Sergey
  Fomin},
      review={\MR{1676282 (2000k:05026)}},
}

\bib{MR929778}{article}{
      author={Steinberg, Robert},
       title={An occurrence of the {R}obinson-{S}chensted correspondence},
        date={1988},
        ISSN={0021-8693},
     journal={J. Algebra},
      volume={113},
      number={2},
       pages={523\ndash 528},
      review={\MR{MR929778 (89c:05006)}},
}

\bib{1605.06696}{misc}{
      author={Swan, Richard~G.},
       title={On the straightening law for minors of a matrix},
        date={2016},
        note={arXiv:1605.06696},
}

\bib{MR0470059}{article}{
      author={Viennot, G.},
       title={Une forme g\'{e}om\'{e}trique de la correspondance de
  {R}obinson-{S}chensted},
        date={1977},
       pages={29\ndash 58. Lecture Notes in Math., Vol. 579},
      review={\MR{0470059}},
}

\bib{MR617466}{article}{
      author={Zelevinski{\u\i}, A.~V.},
       title={The {$p$}-adic analogue of the {K}azhdan-{L}usztig conjecture},
        date={1981},
        ISSN={0374-1990},
     journal={Funktsional. Anal. i Prilozhen.},
      volume={15},
      number={2},
       pages={9\ndash 21, 96},
      review={\MR{617466 (84g:22039)}},
}

\bib{MR783619}{article}{
      author={Zelevinski{\u\i}, A.~V.},
       title={Two remarks on graded nilpotent classes},
        date={1985},
        ISSN={0042-1316},
     journal={Uspekhi Mat. Nauk},
      volume={40},
      number={1(241)},
       pages={199\ndash 200},
      review={\MR{783619 (86e:14027)}},
}

\bib{MR584084}{article}{
      author={Zelevinsky, A.~V.},
       title={Induced representations of reductive {${\germ p}$}-adic groups.
  {II}. {O}n irreducible representations of {${\rm GL}(n)$}},
        date={1980},
        ISSN={0012-9593},
     journal={Ann. Sci. \'Ecole Norm. Sup. (4)},
      volume={13},
      number={2},
       pages={165\ndash 210},
         url={http://www.numdam.org/item?id=ASENS_1980_4_13_2_165_0},
      review={\MR{584084 (83g:22012)}},
}

\end{biblist}
\end{bibdiv}

\end{document}